\documentclass[reqno]{amsart}
\usepackage{hyperref}
\usepackage{amssymb}

\begin{document}
\title[Nonlocal Gross-Pitaevskii equation]
{Global solutions for 3D nonlocal Gross-Pitaevskii equations with rough data}

\author[H. Pecher]
{Hartmut Pecher}

\address{Hartmut Pecher \newline
Fachbereich Mathematik und Naturwissenschaften\\
Bergische Universit\"at Wuppertal\\
Gau{\ss}str.  20, 42097 Wuppertal, Germany}
\email{pecher@math.uni-wuppertal.de}

\subjclass[2000]{35Q55, 35B60, 37L50}
\keywords{Gross-Pitaevskii equation; global well-posedness;
\hfill\break\indent  Fourier restriction norm method}

\begin{abstract}
 We study the Cauchy problem for the Gross-Pitaevskii equation with a nonlocal
 interaction potential of Hartree type in three space dimensions.
 If the potential is even and positive definite or a positive function and
 its Fourier transform decays sufficiently rapidly the problem is shown to
 be globally well-posed for large rough data which not necessarily have
 finite energy and also in a situation where the energy functional is not
 positive definite. The proof uses a suitable modification of the I-method.
\end{abstract}

\maketitle
\numberwithin{equation}{section}
\newtheorem{theorem}{Theorem}[section]
\newtheorem{lemma}[theorem]{Lemma}
\newtheorem{proposition}[theorem]{Proposition}
\allowdisplaybreaks

\section{Introduction and main results}

We consider the Cauchy problem for the Gross-Pitaevskii
equation with nonlocal nonlinearity in three space dimensions:
\begin{gather}\label{1}
i\frac{\partial v}{\partial t} - \Delta v  =  v(W * (1-|v|^2))\\
\label{2} v(x,0)  =  v_0(x) \,,
\end{gather}
under the condition
\begin{equation} \label{3}
 v \to 1  \quad \text{as }  |x| \to + \infty  \,,
\end{equation}
where $v : \mathbb{R}^{1+3} \to {\mathbb C} $.

More generally one could also consider the condition
\begin{equation}
\label{3'}
 |v| \to 1  \quad \text{as }  |x| \to + \infty  \,,
\end{equation}
but for simplicity we restrict ourselves to \eqref{3}.
This problem was introduced by Gross \cite{Gr} and Pitaevskii \cite{P} 
for modeling the kinetic of a weakly interacting Bose gas. Here $W$ describes 
the interaction between bosons. The original equation reads as follows
$$ 
i \frac{\partial \psi}{\partial t}(x,t) + \frac{\hbar ^2}{2m} \Delta \psi(x,t) 
= \psi(x,t) \int_{\mathbb{R}^n} V(x-y) |\psi(y,t)|^2 dy
$$
and is equivalent modulo normalizations to equation \eqref{1}, provided $W*1$ 
is well-defined and positive, which in the cases we consider 
(under the assumptions (A1) and either (A2) or (A3) below) is obviously true. 
For a detailed derivation of our problem from the original Gross-Pitaevskii 
form we refer to \cite{L}.

The most studied case is $W= \delta$ (= Dirac function), which occurs in 
theoretical physics e.g. in superfluidity, nonlinear optics and Bose-Einstein 
condensation \cite{C},\cite{JPRo},\cite{JR},\cite{KL}. 
For further references we also refer to the introduction of \cite{L}.

Using the energy conservation law which in the case $W=\delta$ is
$$
E(v(t)) = \int (|\nabla v(x,t)|^2 + \frac{1}{2} (|v(x,t)|^2 -1)^2) dx = E(v_0)\,,
$$
it was shown by Bethuel and Saut \cite{BS}, Appendix A, that the problem is 
globally well-posed for data of the form $v_0 \in 1+H^1(\mathbb{R}^3)$.
 G\'erard \cite{Ge} proved the same result for data in the larger energy 
space in two and three space dimensions. Gallo \cite{Ga} generalized these 
results to a class of local nonlinearities for data with finite energy and 
space dimension $n \le 4$. Global well-posedness for the Gross-Pitaevskii 
equation in the case $n=4$ in the (critical) energy space was proven by
 Killip, Oh, Pocopnicu and Visan \cite{KOPV}, a case which was not considered 
in Gallo's paper. The author \cite{Pe} showed that global well-posedness holds
 true even for data with less regularity, namely $v_0 = 1+u_0$, where 
$u_0 \in H^s(\mathbb{R}^3)$ for $5/6 < s < 1$.
To prove this result one uses Bourgain type spaces and the so-called $I$-method 
(or method of almost conservation laws), which was introduced by 
Colliander, Keel, Staffilani, Takaoka and Tao \cite{CKSTT} and successfully
applied to various problems.

We now want to study the problem for two types of nonlocal nonlinearities. 
Nonlocal nonlinearities were as mentioned above already introduced by Gross 
and Pitaevskii. In the case of three space dimensions Shchesnovich and 
Kraenkel \cite{SK} consider 
$W(x) = \frac{1}{4 \pi \epsilon^2 |x|} \exp(-\frac{|x|}{\epsilon})$ for
 $\epsilon > 0$ with Fourier transform 
$\widehat{W}(\xi) = \frac{1}{1+\epsilon^2  |\xi|^2}$. The case 
$ W = \chi_{\{|x| \le a\}}$ ($\chi_A$ = characteristic function of the 
set $A$) was used in the study of supersolids \cite{ABJ,JPR,PR}. 
These examples are included in the class of nonlocal nonlinearities with 
suitable mapping properties and positivity conditions on $W$ considered 
by de Laire \cite{L} such that the Cauchy problem \eqref{1},\eqref{2},\eqref{3'} 
is globally well-posed in the space $\phi + H^1(\mathbb{R}^n)$, where
$\phi$ has finite energy and fulfills suitable boundedness assumptions, 
in particular $|\phi(x)| \to 1 $ as $|x| \to \infty$.

Our aim is to give similar results for less regular data. From now on we consider
 the case of three space dimensions and make the following assumptinos:

\textbf{General Assumption on $W$:}
\begin{itemize}
\item[(A1)] $W:  \mathbb{R}^3 \to \mathbb{R}$ is even,
$W \in L^1(\mathbb{R}^3)$, $|\widehat{W}(\xi)| \lesssim \langle \xi \rangle^{-2}$
for all $\xi \in \mathbb{R}^3$
and \textbf{either} 
\item[ (A2)] $\widehat{W}(\xi) \ge 0$ for all $\xi \in \mathbb{R}^3$ 
\textbf{or}
\item[(A3)] $W(x) \ge 0$ \for all $x \in \mathbb{R}^3$.
\end{itemize}
Let us remark, that $\widehat{W}$ is real-valued and even, if $W$ has the 
same properties.

We have especially the following two examples in mind, which we mentioned above: 

\noindent\textbf{Case A:} $W(x) = \frac{1}{4 \pi |x|} e^{-|x|}$.
We have
$\widehat{W}(\xi) = \frac{1}{1+|\xi|^2}$, so that (A1),(A2) and (A3) are satisfied.

\noindent \textbf{Case B:} $ W = \chi_{\{|x| \le a\}}$.
Obviously (A3) is satisfied. We also have
$\widehat{W}(\xi) = a^{-\frac{3}{2}} |\xi|^{-\frac{3}{2}} J_{\frac{3}{2}}(2\pi a|\xi|)$,
 where $J_{\frac{3}{2}}$ is the Bessel function of the first kind of order
 $\frac{3}{2}$, which has the properties 
$J_{\frac{3}{2}}(|\xi|) \sim |\xi|^{3/2}$ as $|\xi| \ll 1$ and
$J_{\frac{3}{2}}(|\xi|) \lesssim \frac{1}{|\xi|^{1/2}}$ as $|\xi| \gg 1$.
Thus (A1) is also satisfied.

For simplicity we assume $\phi \equiv 1$ and consider \eqref{1},\eqref{2},\eqref{3}.
As usual we transform the problem \eqref{1},\eqref{2},\eqref{3} by setting $u=v-1$ 
into the equivalent form
\begin{gather}\label{4}
i \frac{\partial u}{\partial t} - \Delta u + F(u)  =  0 \\
\label{5} u(x,0)  = u_0(x)
\end{gather}
where
\begin{equation}\label{6}
F(u) = (1+u)(W*(|u|^2 +2\, Re\, u))
\end{equation}
under the condition
\begin{equation} \label{7}
u \to 0 \quad \text{as } |x| \to + \infty \,.
\end{equation}
Assuming $W$ to be real-valued and even the conserved energy is given by
\begin{equation}\label{E'}
E(u(t)) = \int |\nabla u(t)|^2 dx + \frac{1}{2} \int (W*(|u|^2 
+ 2 \, \operatorname{Re} u))(|u|^2 + 2 \, \operatorname{Re} u) dx \,.
\end{equation}
We remark that no $L^2$-conservation law holds.

Under our hypothesis on $W$ de Laire's results \cite{L} especially imply that 
the Cauchy problem \eqref{4},\eqref{5},\eqref{6},\eqref{7} is globally 
well-posed in $C^0(\mathbb{R},H^1(\mathbb{R}^3))$ for data
$u_0 \in H^1(\mathbb{R}^3)$. We now show that this problem for data
 $u_0 \in H^s(\mathbb{R}^3)$ is globally well-posed in
$C^0(\mathbb{R},H^s(\mathbb{R}^3))$, i.e. \eqref{1},\eqref{2},\eqref{3} for
$v_0 \in 1+H^s(\mathbb{R}^3)$, if $ 1/2 < s < 1 $ by application of the $I$-method.
As usual the energy conservation law is not directly applicable for $H^s$-data with 
$s<1$. However
there is an almost conservation law for the modified energy $E(Iu)$, which
is well defined for $u \in H^s$ (see the definition of $I$ below). 
If we assume (A1) and (A2), this leads to an a-priori bound of 
$ \|\nabla Iu(t)\|_{L^2}$, if $s$ is close enough to 1,
namely $s > 1/2$, because the energy functional is positive definite, a 
property which is usually assumed when the $I$-method is applied.  
If we assume (A1) and (A3) however it is not obvious that the $H^1$-norm of the 
solution can be controlled by the energy, because it is not definite. 
Nevertheless it is possible to modify the $I$-method in this case suitably, 
but the argument to get the required bound for
$ \|\nabla Iu(t)\|_{L^2}$ is more involved. Once a bound for 
$ \|\nabla Iu(t)\|_{L^2}$ is achieved we can also deduce an a-priori bound for 
$\|u(t)\|_{L^2}$, which together gives an a-priori bound for $\|u(t)\|_{H^s}$.

The main results (cf. the definition of the $X^{s,b}$-spaces below) are summarized 
in the following three theorems:

\begin{theorem}[Unconditional uniqueness] \label{Theorem 1.1}
Assume {\rm (A1)} and moreover {\rm (A2)} or {\rm (A3)}, 
$u_0 \in L^2(\mathbb{R}^3)$. The Cauchy problem \eqref{4},\eqref{5},\eqref{6}
has at most one solution
$u \in C^0([0,T],L^2(\mathbb{R}^3))$ for any $T>0$.
\end{theorem}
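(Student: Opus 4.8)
The plan is to argue by a Gronwall-type $L^2$ energy estimate for the difference of two solutions, using the smoothing provided by $W$ to control the nonlinear contributions. Let $u,w\in C^0([0,T],L^2)$ solve \eqref{4},\eqref{5},\eqref{6} with the same datum, and set $d=u-w$. Writing $N(v)=W*(|v|^2+2\operatorname{Re}v)$, so that $F(v)=(1+v)N(v)$, I would first pair the equation for $d$ with $\bar d$ and take the imaginary part to obtain $\tfrac12\tfrac{d}{dt}\|d(t)\|_{L^2}^2 = -\operatorname{Im}\int (F(u)-F(w))\bar d\,dx$, the Laplacian term dropping after integration by parts. The algebraic identity $F(u)-F(w)=(1+w)(N(u)-N(w))+N(u)\,d$ then isolates the genuinely cubic piece $\int N(u)|d|^2\,dx$; since $W$ is real and even, $N(u)$ is real-valued, so this term is real and contributes nothing to the imaginary part. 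This is the decisive cancellation that removes the most dangerous term.

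It remains to estimate $\operatorname{Im}\int(1+w)(N(u)-N(w))\bar d\,dx$, where $N(u)-N(w)=W*(|u|^2-|w|^2)+2\,W*\operatorname{Re}d$ is real. Here I would exploit the mapping properties of $W$ encoded in {\rm (A1)}: since $|\widehat W(\xi)|\lesssim\langle\xi\rangle^{-2}$ lies in $L^2(\mathbb{R}^3)$, Plancherel gives $W\in L^2$, whence $W*\colon L^1\to L^2$ and $W*\colon L^2\to L^\infty$ are bounded. The contributions containing the factor $1$ are then handled directly: $\|W*(|u|^2-|w|^2)\|_{L^2}\lesssim\||u|^2-|w|^2\|_{L^1}\lesssim(\|u\|_{L^2}+\|w\|_{L^2})\|d\|_{L^2}$ and $\|W*\operatorname{Re}d\|_{L^2}\lesssim\|d\|_{L^2}$, so by Cauchy--Schwarz these terms are $\lesssim(1+\|u\|_{L^2}+\|w\|_{L^2})\|d\|_{L^2}^2$. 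The term $2\operatorname{Im}\int(W*\operatorname{Re}d)\,w\bar d\,dx$ is controlled using $\|W*\operatorname{Re}d\|_{L^\infty}\lesssim\|d\|_{L^2}$, giving a bound $\lesssim\|w\|_{L^2}\|d\|_{L^2}^2$.

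The one remaining term, $\operatorname{Im}\int(W*(|u|^2-|w|^2))\,w\bar d\,dx$, is the main obstacle, since at the pure $L^2$ level the smoothing of $W$ only places $W*(|u|^2-|w|^2)$ in $L^2$ (indeed in $L^r$ for every $r<3$, using $W\in L^p$ for $p<3$, which follows from the decay of $\widehat W$ via Hausdorff--Young), and this is not enough to pair against the two $L^2$ factors $w,d$ pointwise in time. To close the estimate I would gain auxiliary space--time integrability from the equation itself: using the Duhamel representation and the Strichartz estimates for the Schr\"odinger group one shows that any solution in $C^0([0,\delta],L^2)$ automatically lies in the admissible space $L^2_tL^6_x$ on a sufficiently short interval $[0,\delta]$, hence, by interpolation with $C^0_tL^2_x$, in $L^4_tL^3_x$. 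Integrating the offending term in time and applying space--time H\"older together with the bound $\|W*(|u|^2-|w|^2)\|_{L^3_x}\lesssim\||u|^2-|w|^2\|_{L^{3/2}_x}\lesssim(\|u\|_{L^3}+\|w\|_{L^3})\|d\|_{L^3}$ then produces an estimate of the form $\int_0^\delta|\cdots|\,dt\le C\,\delta^{\kappa}$ times the relevant Strichartz norms, for some $\kappa>0$, so that for $\delta$ small the resulting closed inequality for the Strichartz norm of $d$ forces $d\equiv0$ on $[0,\delta]$. Since $\delta$ depends only on the (finite, by continuity) $L^2$ norms on $[0,T]$, iterating over finitely many subintervals yields $u=w$ on all of $[0,T]$. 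The crux of the argument is thus this last bootstrap step, establishing the Strichartz integrability of merely $C^0_tL^2_x$ solutions so as to tame the single surviving cubic interaction.
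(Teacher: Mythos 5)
Your proposal is correct in substance but reaches the conclusion by a genuinely different (hybrid) route. The paper never writes an energy identity: it treats $d=u-v$ directly as a solution of the inhomogeneous Schr\"odinger equation with zero data and runs a smallness argument for $\|d\|_{L^2_tL^6_x}+\|d\|_{L^\infty_tL^2_x}$, bounding \emph{every} piece of $F(u)-F(v)$ in dual Strichartz norms ($L^{1+}_tL^{2-}_x$ or $L^1_tL^2_x$) with a positive power of $T$, using Young's inequality with $W\in L^p$, $1\le p<3$, and only the $L^\infty_tL^2_x$ norms of $u,v$; choosing $T$ small forces $d\equiv0$, and one iterates. Your two distinctive ingredients are both sound and both absent from the paper: the reality of $N(u)=W*(|u|^2+2\operatorname{Re}u)$, which kills the term $\int N(u)|d|^2\,dx$ in the imaginary part (the paper does not need this cancellation, since in the Strichartz framework that term is handled like the others via $W*|u|^2\in L^\infty_tL^{3-}_x$ paired with $d\in L^2_tL^6_x$; in your pointwise-in-time energy framework, by contrast, the cancellation is genuinely doing work, as that term cannot be closed with $\|d\|_{L^2}$ alone); and the explicit bootstrap that a $C^0_tL^2_x$ solution automatically lies in $L^2_tL^6_x$ on short intervals because $F(u)$ sits in a dual Strichartz space by Young's inequality alone --- a point the paper leaves implicit even though the finiteness of $\|u-v\|_{L^2_tL^6_x}$ is what licenses its absorption argument. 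What the hybrid structure does not buy you is economy: with $X=\|d\|_{L^\infty_tL^2_x}$ and $Y=\|d\|_{L^2_tL^6_x}$ your energy inequality only yields something of the form $X^2\lesssim \delta\,X^2+\epsilon(\delta)\,XY$ (using $\|d\|_{L^4_tL^3_x}^2\le XY$), which cannot conclude without an independent bound of $Y$ by $\epsilon'(\delta)(X+Y)$; obtaining that bound means estimating the whole of $F(u)-F(w)$ in dual Strichartz norms, i.e.\ reproducing the paper's proof, after which the energy identity and the cancellation are redundant.

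One further caveat you should address if you carry this out: differentiating $\|d(t)\|_{L^2}^2$ and discarding $\operatorname{Re}\langle -i\Delta d,d\rangle$ is only formal for $d\in C^0_tL^2_x$ and requires a regularization or a derivation from the Duhamel representation; the paper's route, which works exclusively with the integral equation, avoids this issue entirely.
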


\begin{theorem}[Local well-posedness] \label{Theorem 1.2}
Assume {\rm (A1)} and moreover {\rm (A2)} or {\rm (A3)},
 $s \ge 0$ and $u_0 \in H^s(\mathbb{R}^3)$. Then the Cauchy problem
\eqref{4},\eqref{5},\eqref{6} has a unique local solution 
$u \in X^{s,\frac{1}{2}+}[0,\delta]$, where $\delta$ can be chosen as 
$ \delta \sim \|u_0\|_{H^s}^{-\frac{4}{2s+1}-}$. This solution belongs to 
$C^0([0,\delta],H^s(\mathbb{R}^3))$ and is also unique in this space.
\end{theorem}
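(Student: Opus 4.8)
The plan is to recast the Cauchy problem \eqref{4},\eqref{5},\eqref{6} as the integral equation
$$u(t)=e^{-it\Delta}u_0+i\int_0^t e^{-i(t-t')\Delta}F(u(t'))\,dt'$$
and to solve it by a contraction mapping argument in $X^{s,\frac12+}[0,\delta]$. Fixing a smooth cutoff $\psi$ with $\psi\equiv 1$ near $0$ and writing $\psi_\delta(t)=\psi(t/\delta)$, I would study the map
$$\Phi(u)(t)=\psi(t)\,e^{-it\Delta}u_0+i\,\psi_\delta(t)\int_0^t e^{-i(t-t')\Delta}F(u(t'))\,dt'$$
and show that, for $\delta$ as small as stated, $\Phi$ maps a ball of radius $R\sim\|u_0\|_{H^s}$ in $X^{s,\frac12+}$ into itself and is a contraction there. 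The unique fixed point is the desired solution; the embedding $X^{s,\frac12+}[0,\delta]\hookrightarrow C^0([0,\delta],H^s(\mathbb{R}^3))$ yields the time continuity, and uniqueness in $C^0([0,\delta],H^s)$ reduces to Theorem \ref{Theorem 1.1}, since $H^s\hookrightarrow L^2$ for $s\ge 0$.

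The linear half of the argument is the standard machinery of the Fourier restriction norm method. I would invoke the homogeneous estimate $\|\psi(t)e^{-it\Delta}u_0\|_{X^{s,b}}\lesssim\|u_0\|_{H^s}$ and, for $\frac12<b<1$ and $0\le b'<1-b$, the inhomogeneous estimate
$$\Bigl\|\psi_\delta(t)\int_0^t e^{-i(t-t')\Delta}G(t')\,dt'\Bigr\|_{X^{s,b}}\lesssim \delta^{\,1-b-b'}\,\|G\|_{X^{s,-b'}}.$$
With these in hand the whole problem reduces to the multilinear estimate
$$\|F(u)\|_{X^{s,-b'}}\lesssim \|u\|_{X^{s,b}}+\|u\|_{X^{s,b}}^2+\|u\|_{X^{s,b}}^3$$
together with its difference version bounding $F(u)-F(\tilde u)$. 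The positive power of $\delta$ supplies the contraction factor, and optimizing it against $R\sim\|u_0\|_{H^s}$ produces the stated lifespan $\delta\sim\|u_0\|_{H^s}^{-4/(2s+1)-}$, the endpoint loss ``$-$'' reflecting the choice $b=\frac12+$.

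The heart of the matter is the nonlinear estimate, and here the structure of \eqref{6} enters decisively. I would split
$$F(u)=2\,W*(\operatorname{Re}u)+\bigl(W*|u|^2+2u\,(W*\operatorname{Re}u)\bigr)+u\,(W*|u|^2)$$
into its linear, quadratic and cubic parts. Because $W\in L^1$ with $|\widehat W(\xi)|\lesssim\langle\xi\rangle^{-2}$, convolution with $W$ is bounded on every $X^{s,b}$ and, more importantly, gains two derivatives on the factor it acts on. The linear term is then immediate. For the quadratic and cubic terms I would place the smoothed factor (which sits two derivatives above $|u|^2$ or $\operatorname{Re}u$) in a favourable space and control the remaining product via the $L^{10/3}_{t,x}$ Strichartz estimate in its $X^{s,b}$ form, combined with Sobolev multiplication in $\mathbb{R}^3$.

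The main obstacle is precisely these quadratic and cubic estimates at the lowest regularity $s=0$: for the purely local cubic Schr\"odinger nonlinearity in three dimensions a product estimate of this type fails below $s=\frac12$, so it is exactly the two extra derivatives furnished by $|\widehat W(\xi)|\lesssim\langle\xi\rangle^{-2}$ that must absorb the roughness of $u$ and let the argument reach the full range $s\ge 0$. Once the multilinear estimate and its Lipschitz counterpart are established, the contraction mapping principle closes the proof, and the fixed point inherits the asserted regularity and uniqueness.
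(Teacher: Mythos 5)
Your overall scheme coincides with the paper's: a contraction in $X^{s,\frac12+}[0,\delta]$, the replacement of convolution with $W$ by $\langle D\rangle^{-2}$ (justified by taking nonnegative Fourier transforms and $|\widehat W(\xi)|\lesssim\langle\xi\rangle^{-2}$), Strichartz/Sobolev estimates for the quadratic and cubic terms, and reduction of uniqueness in $C^0([0,\delta],H^s)$ to Theorem \ref{Theorem 1.1}. Your remark that the two derivatives gained from $\widehat W$ are exactly what pushes the admissible range down to $s\ge 0$ (below the $s=\tfrac12$ threshold of the local cubic NLS) is also the right heuristic.

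There is, however, one concrete gap: your accounting of where the power of $\delta$ comes from cannot produce the stated lifespan $\delta\sim\|u_0\|_{H^s}^{-\frac{4}{2s+1}-}$. You attribute the small factor entirely to the Duhamel estimate $\delta^{1-b-b'}$ with $b=\frac12+$, which is capped at $\delta^{\frac12-}$ and is independent of $s$; combined with a $\delta$-free trilinear bound $\|F(u)\|_{X^{s,-b'}}\lesssim\|u\|_{X^{s,b}}^3+\dots$ this yields only $\delta\sim\|u_0\|_{H^s}^{-4-}$, which is strictly worse than the claim for every $s>0$. The paper instead extracts the $s$-dependent gain $\delta^{s+\frac12-}$ \emph{inside} the trilinear estimate, by placing one factor in $L_t^{1+}L_x^{\hat v}$ and then applying H\"older in time against the Strichartz-admissible exponent $\hat w$ with $\frac{1}{\hat w}=\frac12-s$, i.e. $\|u_1\|_{L_t^{1+}H_x^{s,\hat t}}\lesssim\delta^{s+\frac12-}\|u_1\|_{L_t^{\hat w}H_x^{s,\hat t}}$; the contraction condition $\delta^{s+\frac12-}\|u_0\|_{H^s}^2\lesssim 1$ then gives exactly $\delta\sim\|u_0\|_{H^s}^{-\frac{4}{2s+1}-}$. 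So to prove the theorem as stated you must build the time-H\"older step into the multilinear estimates themselves rather than rely on the linear $X^{s,b}$ energy estimate for the factor of $\delta$; with that modification your argument matches the paper's.
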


\begin{theorem}[Global well-posedness] \label{Theorem 1.3}
Assume {\rm (A1)} and moreover {\rm (A2)} or {\rm (A3)}, $ T >0$, $s > 1/2$ 
and $u_0 \in H^s(\mathbb{R}^3)$. Then the Cauchy problem
 \eqref{4},\eqref{5},\eqref{6} has a unique global solution 
$u \in X^{s,\frac{1}{2}+}[0,T]$. This solution belongs to 
$C^0([0,T],H^s(\mathbb{R}^3))$ and is also unique in this space.
\end{theorem}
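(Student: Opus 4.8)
The plan is to run the $I$-method. For a large parameter $N$ I introduce the smoothing operator $I = I_N$ with Fourier multiplier $m(\xi)$ equal to $1$ for $|\xi| \le N$ and to $(N/|\xi|)^{1-s}$ for $|\xi| \ge 2N$, so that $I : H^s \to H^1$ with $\|Iu\|_{H^1} \lesssim N^{1-s}\|u\|_{H^s}$ and, conversely, $\|u\|_{H^s} \lesssim \|Iu\|_{H^1}$. The modified energy $E(Iu)$ is then well defined for $u \in H^s$, and Theorem \ref{Theorem 1.2}, applied to the equation satisfied by $Iu$, produces local solutions whose existence time is controlled by $\|Iu_0\|_{H^1}$. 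The whole scheme is a bootstrap: assuming $E(Iu(t))$ stays comparable to $E(Iu_0)$ on a subinterval, the local existence time is uniform, so $[0,T]$ is covered by $O(T/\delta)$ steps, and one shows that the total energy increment is negligible for $N$ large.

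\textbf{Almost conservation law.} I would differentiate $E(Iu)$ along the flow and integrate over a local interval, the point being that the commutator $I F(u) - F(Iu)$ between $I$ and the quadratic, cubic and quartic pieces of the nonlinearity gains a negative power of $N$ whenever the interacting frequencies are comparable. Using the multilinear $X^{s,b}$-estimates together with bilinear Strichartz and $L^4$-type estimates for the Schr\"odinger group in three dimensions, and exploiting $|\widehat W(\xi)| \lesssim \langle\xi\rangle^{-2}$ and $W \in L^1$, I expect a bound of the form $\sup_{[0,\delta]}|E(Iu(t)) - E(Iu_0)| \lesssim N^{-\beta}\,\mathcal{P}(\|Iu_0\|_{H^1})$ with some $\beta > 0$ and a polynomial $\mathcal{P}$; the threshold $s > 1/2$ should be exactly the condition under which $(T/\delta)\,N^{-\beta} \to 0$ as $N \to \infty$, closing the bootstrap.

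\textbf{From $E(Iu)$ to $\|\nabla Iu\|_{L^2}$.} Under (A1) and (A2) the potential energy is non-negative, since $\int (W * g)\,g = \int \widehat W\,|\widehat g|^2 \ge 0$ for $g = |Iu|^2 + 2\,\mathrm{Re}\,Iu$; hence $\|\nabla Iu\|_{L^2}^2 \le E(Iu) + C\|Iu\|_{L^2}^2$ directly, the linear-in-$u$ potential terms being harmless because $\widehat W$ is bounded. Under (A1) and (A3) the quadratic form $\int (W*g)\,g$ need not be sign-definite, so the potentially negative cubic and quartic contributions must be estimated by Young's inequality, $\|W*f\|_{\infty} \le \|W\|_{1}\|f\|_{\infty}$, and Gagliardo--Nirenberg, e.g. $\int (W*|Iu|^2)|Iu|^2 \lesssim \|Iu\|_{L^2}\|\nabla Iu\|_{L^2}^3$, and then absorbed into $\|\nabla Iu\|_{L^2}^2$ using the smallness supplied by the bootstrap. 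This is the more involved case announced in the introduction.

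\textbf{$L^2$-bound and conclusion.} Since no $L^2$-conservation holds, I would control $\|u(t)\|_{L^2}$ separately by differentiating $\|u(t)\|_{L^2}^2$; a computation shows that only a quadratic term of the form $\int (\mathrm{Im}\,u)\,(W*\mathrm{Re}\,u)$ survives at leading order, bounded by $C\|u\|_{L^2}^2$, while the remaining higher-order terms are controlled by the already-bounded norms, so Gronwall's inequality yields an a priori bound on $\|u(t)\|_{L^2}$ over $[0,T]$. Combining with the bound on $\|\nabla Iu\|_{L^2}$ gives $\sup_{[0,T]}\|u(t)\|_{H^s} \lesssim \|Iu(t)\|_{H^1} \lesssim \bigl(E(Iu(t)) + \|Iu(t)\|_{L^2}^2\bigr)^{1/2} < \infty$, and Theorem \ref{Theorem 1.2} then continues the local solution to all of $[0,T]$; uniqueness follows from the local theory and Theorem \ref{Theorem 1.1}. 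I expect the main obstacles to be the multilinear estimate underlying the almost conservation law and the non-definite case (A3), where the gradient control and the $L^2$-growth must be closed together within a single bootstrap.
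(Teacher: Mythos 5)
Your overall architecture matches the paper: the $I$-method with a modified local theory at the $\nabla Iu \in X^{0,\frac12+}$ level, an almost conservation law whose increment over $T/\delta$ steps is negligible precisely when $s>1/2$, the Plancherel argument $\int (W*g)g = \int \widehat W |\widehat g|^2 \ge 0$ in case (A2), and a separate Gronwall bound for $\|u(t)\|_{L^2}$ (the paper's cubic term $\int (W*|u|^2)\,\mathrm{Im}\,u$ is actually the delicate one there, handled via $\|u\|_{L^3}^3 \lesssim c(T)(\|u\|_{L^2}^2+1)$ using a frequency splitting and the assumed gradient bound, but your "controlled by already-bounded norms" is the right idea). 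The genuine gap is in case (A3). You propose to bound the sign-indefinite potential terms by Gagliardo--Nirenberg, e.g. $\int (W*|Iu|^2)|Iu|^2 \lesssim \|Iu\|_{L^2}\|\nabla Iu\|_{L^2}^3$, and "absorb" them into $\|\nabla Iu\|_{L^2}^2$. This cannot close: in the bootstrap $\|\nabla Iu\|_{L^2} \sim N^{1-s}$ is large, so a cubic power of the gradient can never be absorbed into its square; there is no smallness available. Moreover the quartic term is not the enemy --- under (A3) it is non-negative --- the dangerous term is the cubic cross term $\int (W*|Iu|^2)\,\mathrm{Re}\,Iu$, which has no sign.

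The paper's resolution is structurally different and you would need something like it. Following de Laire, one splits pointwise $Iu = Iu\,\chi_{\{|Iu|\le 5\}} + Iu\,\chi_{\{|Iu|>5\}}$; on the large set $|(Iu)_2|^2 + 4\,\mathrm{Re}(Iu)_2 \ge |(Iu)_2|(|(Iu)_2|-4) \ge 0$, so with $W\ge 0$ that whole contribution is non-negative, while on the small set the integrand is bounded and Young's inequality gives a contribution $O(\|Iu\|_{L^2}^2)$. This yields $\|\nabla Iu\|_{L^2}^2 \le |E(Iu)| + a\|Iu\|_{L^2}^2$, which forces a second, genuinely new ingredient: an a priori bound on $\|Iu(t)\|_{L^2}$ itself (not merely $\|u(t)\|_{L^2}$), obtained by differentiating $\|Iu\|_{L^2}^2$, estimating the commutator $\langle I(W*|u|^2)-(W*|Iu|^2),Iu\rangle$ with a gain of $N^{-2}$, bounding $|\mathrm{Im}\langle F(Iu),Iu\rangle|$ by $\|Iu\|_{L^2}^2 + |E(Iu)|$ via the same truncation, and running Gronwall jointly with the energy increment over the $T/\delta$ time steps. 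Your closing remark that "the gradient control and the $L^2$-growth must be closed together within a single bootstrap" correctly anticipates this structure, but the mechanism you actually propose for the sign-indefinite terms would fail, so as written the (A3) half of the theorem is not proved.
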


We use the following notation and well-known facts: the multiplier $I=I_N$ is
for given $s<1$ and $N \ge 1$ defined by
$$ 
\widehat{I_N f}(\xi) := m_N(\xi) \widehat{f}(\xi) \,,
$$
where $ \widehat{\enspace}  $ denotes the Fourier transform with respect to the
space
variables. Here $m_N(\xi)$ is a smooth, radially symmetric, nonincreasing
function of $|\xi|$ with
$$
 m_N(\xi) = \begin{cases}    
 1 & |\xi|\le N \\  
 (\frac{N}{|\xi|})^{1-s}  &    |\xi| \ge 2N .
\end{cases} 
$$
We remark that $I: H^s \to H^1$ is a smoothing operator, so that especially
$E(Iu)$ is well-defined for $u \in H^s(\mathbb{R}^3)$.
This follows from $W \in L^1$, Young's inequality and Sobolev's embedding
$H^1(\mathbb{R}^3) \subset L^4(\mathbb{R}^3)$.

We use the Bourgain type function space $X^{m,b}$ belonging to the
Schr\"odinger equation $iu_t -\Delta u = 0$, which is defined as follows:
 let $ \widehat{\enspace}$ or $\mathcal{F}$ denote
the Fourier transform with respect to space and time and $\mathcal {F}^{-1}$
its inverse. $X^{m,b}$ is the
completion of $\mathcal{S}(\mathbb{R} \times \mathbb{R}^3)$ with respect to
$$ 
\|f\|_{X^{m,b}}  =  \| \langle \xi \rangle^m \langle \tau \rangle^b
{\mathcal F}(e^{-it\Delta} f(x,t))\|_{L^2_{\xi, \tau}} = \| \langle \xi
\rangle^m \langle \tau + |\xi|^2 \rangle^b
\widehat{f}(\xi,\tau)\|_{L^2_{\xi,\tau}} \,, $$
For a given time interval $I$ we define
$$ 
\|f\|_{X^{m,b}(I)} := \inf_{g_{|I}=f}  \|g\|_{X^{m,b}} \,. 
$$

We recall the following facts about the solutions $u$ of the inhomogeneous
linear
Schr\"odinger equation (see e.g. \cite{GTV})
\begin{equation} \label {4'}
i u_t - \Delta u = F \,, \quad u(0) = f \,.
\end{equation}
For $ b'+1 \ge b \ge 0 \ge b' > -1/2 $ and $T \le 1$ we have
$$ 
\|u\|_{X^{m,b}[0,T]} \lesssim \|f\|_{H^m}  +
T^{1+b'-b}\|F\|_{X^{m,b'}[0,T]} \,. 
$$
For $ 1/2 > b > b' \ge 0$ or $0 \ge b > b' > -1/2$:
$$
 \|f\|_{X^{m,b'}[0,T]} \lesssim T^{b-b'} \|f\|_{X^{m,b}[0,T]}
 $$
(see e.g. \cite[ Lemma 1.10]{G}).

Fundamental are the following Strichartz type estimates for the solution $u$ of
\eqref{4'} in three space dimensions (see \cite{CH,KT}):
$$ 
\|u\|_{L^q(I,L^r(\mathbb{R}^3))} \lesssim \|f\|_{L^2(\mathbb{R}^3)} +
\|F\|_{L^{\tilde{q}'}(I,L^{\tilde{r}'}(\mathbb{R}^3))}
$$
with implicit constant independent of the interval $I \subset \mathbb{R}$
for all pairs $(q,r),(\tilde{q},\tilde{r})$ with 
$q,r,\tilde{q},\tilde{r} \ge 2$
and $\frac{1}{q} + \frac{3}{2r} = \frac{3}{4}$ , 
$\frac{1}{\tilde{q}} + \frac{3}{2\tilde{r}} = \frac{3}{4}$, where
$\frac{1}{\tilde{q}}+\frac{1}{\tilde{q}'} = 1$ and
$\frac{1}{\tilde{r}}+\frac{1}{\tilde{r}'} = 1$.
This implies
$$ 
\|\psi\|_{L^q(I,L^r(\mathbb{R}^3))} \lesssim
\|\psi\|_{X^{0,\frac{1}{2}+}(I)}  \,. 
$$

For real numbers $a$ we denote by $a+$, $a++$, $a-$ and $a--$ the numbers
$a+\epsilon$, $a+2\epsilon$, $a-\epsilon$ and $a-2\epsilon$, respectively,
where $\epsilon > 0$ is sufficiently small.
 We also use the notation $\langle x \rangle := (1+|x|^2)^{1/2}$
for $x \in \mathbb{R}^3$.

The paper is organized as follows: in section 1 we prove the uniqueness result 
Theorem \ref{Theorem 1.1} and two versions of a local well-posedness result 
for \eqref{4},\eqref{5},\eqref{6}, namely $u \in
X^{s,\frac{1}{2}+}[0,\delta] $ for data $u_0 \in H^s$ with $s \ge 0$ 
(Theorem \ref{Theorem 1.2}), and a modification
where $\nabla Iu \in X^{0, \frac{1}{2}+}[0,\delta]$ for data $\nabla Iu_0 \in
L^2$ (Proposition \ref{Prop}), which
is necessary in order to combine it with an almost conservation law for the
modified energy $E(Iu)$. In section 2 we use these local results and bounds
for the modified energy given in section 3 in order to get the main theorem 
(Theorem \ref{Theorem 1.3}).
Under the assumptions (A1) and (A2) it is namely shown that the bounds for
 the modified energy immediately give a polynomial bound for 
$\|\nabla Iu(t)\|_{L^2}$, which can be shown to imply a uniform exponential 
bound for $\|u(t)\|_{L^2}$, and as a consequence for $ \|u(t)\|_{H^s}$, 
which in view of the local well-posedness result suffices to get a global solution.
 Under the assumptions (A1) and (A3) we cannot immediately get a bound for  
$\|\nabla Iu(t)\|_{L^2}$ from the bound for the modified energy, but first we 
have to show an (exponential) bound for $\|Iu(t)\|_{L^2}$, which together 
with an energy bound gives the desired (exponential) bound for 
$\|\nabla Iu(t)\|_{L^2}$ and after that as in the previous case the 
global well-posedness result.
In section 3 we first calculate $\frac{d}{dt}
E(Iu)$ for any solution of the equation \eqref{4} and
estimate the time integrated terms which appear in  $\frac{d}{dt} E(Iu)$, 
which is the most complicated part. In section 2 these estimates are shown 
to control the modified energy $E(Iu)$
uniformly on arbitrary time intervals $[0,T]$, provided $s > 1/2$.

\section{Uniqueness and local well-posedness}

\begin{proof}[Proof of Theorem \ref{Theorem 1.1}]
Let $u,v \in C^0([0,T],L^2))$ be two solutions. Using Strichartz type estimates 
in order to control
$$ 
\|u-v\|_{L^2_t L_x^6} + \|u-v\|_{L^{\infty}_t L^2_x} 
$$
we have to estimate the various terms of $F(u)-F(v)$. By (A1) and the 
Hausdorff-Young inequality we have $W \in L^p$ for $1\le p < 3$ so that
 by Young's inequality we obtain
\begin{align*}
\|(W*|u|^2)(u-v)\|_{L^{1+}_t L^{2-}_x} 
& \lesssim \|W*|u|^2\|_{L^{2+}_t L^{3-}_x} \|u-v\|_{L^2_t L_x^6} \\
& \lesssim \|W\|_{L^{3-}_x} \|u\|_{L^{4+}_t L^2_x}^2 \|u-v\|_{L^2_t L^6_x} \\
&\lesssim T^{\frac{1}{2}-} \|u\|_{L^{\infty}_t L^2_x}^2 \|u-v\|_{L_t^2 L_x^6}\,,
\end{align*}
\begin{align*} 
\|(W*(|u|^2-|v|^2))v\|_{L^{1+}_t L^{2-}_x} 
& \lesssim \|W*(|u|^2 -|v|^2)\|_{L^{2}_t L^{\infty-}_x} \|u\|_{L^{2+}_t L_x^2} \\
& \lesssim \|W\|_{L^{3-}_x} \||u|^2 - |v|^2\|_{L^2_t L^{3/2}_x} \|u\|_{L^{2+}_t L^2_x} \\
&\lesssim T^{\frac{1}{2}-} (\|u\|_{L^{\infty}_t L^2_x} 
+ \|v\|_{L^{\infty}_t L^2_x}) \|u-v\|_{L_t^2 L_x^6} \|u\|_{L_t^{\infty} L_x^2}\,,
\end{align*}
\begin{align*}
 \|(W* \operatorname{Re} u)(u-v)\|_{L_t^1 L_x^2} 
& \lesssim \|W* \operatorname{Re} u\|_{L_t^1 L_x^{\infty}} 
 \|u-v\|_{L_t^{\infty} L_x^2} \\
& \lesssim \|W\|_{L^2} T \|u\|_{L_t^{\infty}L_x^2} \|u-v\|_{L_t^{\infty} L_x^2}\,,
\end{align*}
\[
 \|W* \operatorname{Re} (u-v)\|_{L_t^1 L_x^2}
 \lesssim \|W\|_{L^1} T  \|u-v\|_{L_t^{\infty} L_x^2} \,.
\]
Similarly the remaining terms can be estimated. Therefore, choosing $T$ small enough,
 we obtain $u \equiv v$.
\end{proof}

Next we prove the local well-posedness results.

\begin{proof}[Proof of Theorem \ref{Theorem 1.2}]
We want to apply the contraction mapping principle in the Bourgain type 
space $X^{s,\frac{1}{2}+}[0,\delta]$.
We have to estimate
$$ 
\|(W*(|u|^2 + 2 \operatorname{Re} u))(1+u)\|_{X^{s,-\frac{1}{2}++}} \,.
 $$
We denote by $D^l$ the operator with symbol $|\xi|^l$ and similarly by 
$\langle D \rangle^l$ the operator with symbol $\langle \xi \rangle^l$.
To estimate the cubic term by $\delta^{\frac{1}{2}+s-} \|u\|_{X^{s,\frac{1}{2}+}}^3$ 
we have to show (ignoring complex conjugates, which play no role for the calculations):
$$ 
\int \langle D \rangle^s ( \langle D \rangle^{-2}(u_1 u_2) u_3)\psi \,dx\,dt
 \lesssim \delta^{\frac{1}{2}+s-} \prod_{i=1}^3\|u_i\|_{X^{s,\frac{1}{2}+}} 
\|\psi\|_{X^{0,\frac{1}{2}--}} \,. 
$$
This is sufficient, because we can assume without loss of generality that the Fourier 
transforms $\widehat{u}_i(\xi_i,\tau_i)$ and $\widehat{\psi}(\xi,\tau)$ are 
nonnegative, so that using the fundamental assumption 
$|\widehat{W}(\xi)| \lesssim \langle \xi \rangle^{-2}$ it is possible to replace
 here and in similar situations in the following the convolution with $W$ by 
application of $\langle D \rangle^{-2}$.
Using the Leibniz rule for fractional derivatives we reduce to the estimates 
(assuming without loss of generality $|\xi_2| \ge |\xi_1|$):
$$ 
\|  \langle D \rangle^{-2}(u_1 \langle D \rangle^s u_2) u_3 \psi\|_{L^1_{xt}} 
\lesssim \delta^{\frac{1}{2}+s-} \prod_{i=1}^3 \|u_i\|_{X^{s,\frac{1}{2}+}} 
\|\psi\|_{X^{0,\frac{1}{2}--}}  
$$
and
$$
 \|   \langle D\rangle^{-2}(u_1 u_2) \langle D \rangle^s u_3\psi\|_{L^1_{xt}} 
\lesssim \delta^{\frac{1}{2}+s-} \prod_{i=1}^3 \|u_i\|_{X^{s,\frac{1}{2}+}} 
\|\psi\|_{X^{0,\frac{1}{2}--}} \,.
 $$
We obtain
$$ 
\|  \langle D \rangle^{-2}(u_1 \langle D \rangle^s u_2) u_3 \psi\|_{L^1_{xt}} 
\lesssim \|  \langle D \rangle^{-2}(u_1 \langle D \rangle^s u_2) u_3\|_{L^{1+}_t L^2_x} 
\|\psi\|_{L_t^{\infty -} L^2_x}  
$$
and
\begin{align*}
\| \langle D \rangle^{-2}(u_1 \langle D \rangle^s u_2)u_3 \|_{L^{1+}_t L^2_x} 
&\lesssim
\|\langle D \rangle^{-2}(u_1 \langle D \rangle^s u_2)\|_{L^{1+}_t L^{\hat{p}}_x} \|u_3\|_{L_t^{\infty} L_x^{\hat{q}}} \\
& \lesssim
 \| u_1 \langle D \rangle^s  u_2\|_{L^{1+}_t L^{\hat{r}}_x} \|u_3\|_{L_t^{\infty} H_x^s} \\
& \lesssim
\| u_1 \|_{L_t^{1+} L_x^{\hat{v}}} \|\langle D \rangle^s  u_2\|_{L^{\infty}_t L^2_x} \|u_3\|_{L_t^{\infty} H_x^s} \\
& \lesssim
\| u_1 \|_{L_t^{1+} H_x^{s,\hat{t}}} \|u_2\|_{X^{s,\frac{1}{2}+}} \|u_3\|_{X^{s,\frac{1}{2}+}} \\
& \lesssim \delta^{s+\frac{1}{2}-}
\| u_1 \|_{L_t^{\hat{w}} H_x^{s,\hat{t}}} \|u_2\|_{X^{s,\frac{1}{2}+}} \|u_3\|_{X^{s,\frac{1}{2}+}} \\
& \lesssim \delta^{s+\frac{1}{2}-} \prod_{i=1}^3\|u_i\|_{X^{s,\frac{1}{2}+}} \,.
\end{align*}
Here we set $\frac{1}{\hat{q}} = \frac{1}{2}-\frac{s}{3}$,
 $\frac{1}{\hat{p}} = \frac{s}{3}$, $ \frac{1}{\hat{r}} = \frac{2}{3}+\frac{s}{3}$, 
so that $H_x^s \subset L_x^{\hat{q}}$, and 
$\frac{1}{\hat{v}} = \frac{1}{6} +\frac{s}{3}$,  
$\frac{1}{\hat{t}} = \frac{1}{6} + \frac{2}{3}s$ , so that 
$H^{s,\hat{t}}_x \subset L_x^{\hat{v}}$, and finally 
$\frac{1}{\hat{w}} = \frac{1}{2}-s$, so that 
$ \frac{1}{\hat{w}} + \frac{3}{2\hat{t}} = \frac{3}{4}$, 
which allows to apply Strichartz' estimate in the last line. Similarly we obtain
\begin{align*}
\| \langle D \rangle^{-2}(u_1 u_2) \langle D \rangle^s u_3 \|_{L^{1+}_t L^2_x} &\lesssim
\|\langle D \rangle^{-2}(u_1 u_2)\|_{L^{1+}_t L^{\infty}_x} \|\langle D \rangle^s u_3\|_{L_t^{\infty} L_x^2} \\
 & \lesssim
 \| u_1 u_2\|_{L^{1+}_t L^{\frac{3}{2}+}_x} \|u_3\|_{X^{s,\frac{1}{2}+}} \\
& \lesssim
\| u_1 \|_{L_t^{1+} L_x^{\hat{v}}}\| u_2 \|_{L_t^{\infty} L_x^{\hat{q}}} \|u_3\|_{X^{s,\frac{1}{2}+}} \\
& \lesssim \delta^{s+\frac{1}{2}-}
\| u_1 \|_{L_t^{\hat{w}} H_x^{s,\hat{t}}} \| u_2 \|_{L_t^{\infty} H_x^{s}} 
\|u_3\|_{X^{s,\frac{1}{2}+}} \\
& \lesssim \delta^{s+\frac{1}{2}-} \prod_{i=1}^3 \|u_i\|_{X^{s,\frac{1}{2}+}} \,.
\end{align*}
Here $\frac{1}{\hat{q}} = \frac{1}{2}-\frac{s}{3}$, 
$\frac{1}{\hat{v}} = \frac{1}{6} +\frac{s}{3}-$,
$\frac{1}{\hat{t}} = \frac{1}{6} + \frac{2}{3}s-$, 
$\frac{1}{\hat{w}} = \frac{1}{2}-s+$, so that 
$\frac{1}{\hat{w}} + \frac{3}{2\hat{t}} = \frac{3}{4}$ allows to apply Strichartz' 
estimate again.

The quadratic terms are handled as follows (assuming again $|\xi_2| \ge |\xi_1|$):
\begin{align*}
\|\langle D \rangle^{-2+s} (u_1 u_2)\|_{L_t^{1+} L_x^2}
 & \lesssim \|\langle D \rangle^s (u_1 u_2)\|_{L_t^{1+} L_x^1} 
\lesssim \|u_1 \langle D \rangle^s u_2\|_{L_t^{1+}L_x^1} \\ 
& \lesssim \|u_1\|_{L_t^{2+} L_x^2} \| \langle D \rangle^s u_2 \|_{L_t^2 L_x^2} 
\lesssim \delta^{1-} \|u_1\|_{X^{s,\frac{1}{2}+}}  \|u_2\|_{X^{s,\frac{1}{2}+}}
\end{align*}
and
\begin{align*}
&\|\langle D \rangle^s ((\langle D \rangle^{-2}u_1)u_2)\|_{L_t^{1+} L_x^2} \\ & \lesssim \|(\langle D \rangle^{-2+s} u_1)u_2\|_{L_t^{1+} L_x^2} + \|\langle D \rangle^{-2} u_1 \langle D \rangle^s u_2 \|_{L_t^{1+} L_x^2} \\
&\lesssim \| \langle D \rangle^{-2+s} u_1\|_{L_t^{1+}L_x^{\infty}} \|u_2\|_{L_t^{\infty} L_x^2} + \| \langle D \rangle^{-2} u_1\|_{L_{t,x}^{\infty}} \|\langle D \rangle^s u_2\|_{L_t^{1+} L_x^2}\\
& \lesssim \delta^{1-} \| \langle D \rangle^s u_1\|_{L_t^{\infty}L_x^2} \|u_2\|_{L_t^{\infty} L_x^2} + \delta^{1-}\| u_1\|_{L_t^{\infty}L_x^2} \|\langle D \rangle^s u_2\|_{L_t^{\infty} L_x^2}\\
& \lesssim \delta^{1-} \|u_1\|_{X^{s,\frac{1}{2}+}} \|u_2\|_{X^{s,\frac{1}{2}+}} \,.
\end{align*}
Similar estimates hold for the difference $F(u)-F(v)$, so that a
 standard Picard iteration under the conditions 
$ \delta^{s+\frac{1}{2}-} \|u_0\|_{H^s}^2 \lesssim 1$ and 
$\delta^{1-} \|u_0\|_{H^s} \lesssim 1$ shows the existence of a unique solution 
in $X^{s,\frac{1}{2}+}[0,\delta] \subset C^0([0,\delta],H^s)$.
 It is also unique is this latter space by Theorem \ref{Theorem 1.1}.
\end{proof}

\noindent \textbf{Remark:} This Theorem shows that in order to get a 
global solution it is sufficient to give an a-priori bound of $\|u(t)\|_{H^s}$.

We next prove a modified local well-posedness result involving the operator
 $I$ (recall that $I$ depends on $s$ and $N$).

\begin{proposition} \label{Prop}
Assume $s \ge 0$ and $\nabla Iu_0 \in L^2$. Then the Cauchy problem 
\eqref{4},\eqref{5},\eqref{6} (after application of $I$) has a unique local 
solution $u$ with $\nabla Iu \in X^{0,\frac{1}{2}+}[0,\delta]$ and 
$\|\nabla Iu\|_{X^{0,\frac{1}{2}+}[0,\delta]} \le \sqrt{M} \|\nabla Iu_0\|_{L^2}$,
 where $M\ge 1$ is independent of $u_0$, and $\delta \le 1$ can be chosen such that
$$ 
(\delta^{\frac{1}{2}-}N^{-2} + \delta^{1-}) \|\nabla Iu_0\|^2_{L^2} \sim 1 \,. 
$$
\end{proposition}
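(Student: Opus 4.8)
The plan is to run the contraction argument of Theorem~\ref{Theorem 1.2} one derivative higher, for the smoothed variable $Iu$ instead of $u$. Because $I$ commutes with $e^{-it\Delta}$, applying $I$ to the Duhamel form of \eqref{4},\eqref{5},\eqref{6} and then $\nabla$ yields
\begin{equation*}
\nabla Iu(t)=e^{it\Delta}\nabla Iu_0-i\int_0^t e^{i(t-t')\Delta}\,\nabla IF(u(t'))\,dt',
\end{equation*}
and I would solve this by Banach's fixed point theorem for the unknown $\nabla Iu$ in the closed ball of radius $\sqrt M\,\|\nabla Iu_0\|_{L^2}$ in $X^{0,\frac12+}[0,\delta]$, reconstructing $u=I^{-1}(Iu)$ through the Fourier multiplier $I^{-1}$ with symbol $m_N^{-1}$. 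By the inhomogeneous linear estimate and the transition estimate recalled in the introduction, the task reduces to bounding $\|\nabla IF(u)\|_{X^{0,-\frac12++}[0,\delta]}$ by powers of $\|\nabla Iu\|_{X^{0,\frac12+}[0,\delta]}$, with $\delta$- and $N$-powers that assemble into the stated calibration $(\delta^{\frac12-}N^{-2}+\delta^{1-})\|\nabla Iu_0\|_{L^2}^2\sim1$; the difference estimate for $F(u)-F(v)$ is identical in structure.

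As in Theorem~\ref{Theorem 1.2} I would split $F(u)=W*|u|^2+2\,W*\operatorname{Re}u+u\,(W*|u|^2)+2u\,(W*\operatorname{Re}u)$ into its linear, quadratic and cubic parts and replace every convolution by $\langle D\rangle^{-2}$ using $|\widehat W(\xi)|\lesssim\langle\xi\rangle^{-2}$. Each piece is then estimated by exactly the Strichartz-plus-Hölder-in-time chains of that proof, but now read off at regularity zero for $\nabla Iu$ rather than at regularity $s$ for $u$; the powers of $\delta$ are again produced by Hölder in $t$ together with the embedding $\|f\|_{X^{0,b'}[0,\delta]}\lesssim\delta^{b-b'}\|f\|_{X^{0,b}[0,\delta]}$. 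Two new ingredients are needed. First, since only $\nabla Iu$ is controlled, every undifferentiated factor must be estimated through the homogeneous Sobolev embedding $\dot H^1(\mathbb{R}^3)\hookrightarrow L^6$, equivalently through the Strichartz pairs $(2,6)$ and $(\infty,2)$ applied to $\nabla Iu$; this is what makes the scheme close despite the absence of any $L^2$-control of $u$ at low frequencies. Second, the single operator $\nabla I$ must be distributed onto the highest-frequency factor of each product by a Leibniz/interpolation argument for $I$, admissible because $m_N$ is smooth, radial and nonincreasing with $m_N\le1$; at the frequencies $\gtrsim N$ where $I$ ceases to smooth, the two derivatives furnished by $\langle D\rangle^{-2}$ supply the gain $N^{-2}$ recorded in the smallness condition, whereas at frequencies $\lesssim N$ one has $Iu=u$ and no $N$-loss occurs.

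Inserting the resulting bounds into the linear estimate, the map sends the ball of radius $R=\sqrt M\,\|\nabla Iu_0\|_{L^2}$ into itself and is a contraction provided $(\delta^{\frac12-}N^{-2}+\delta^{1-})R^2\lesssim1$; choosing $M$ large enough to absorb the linear contribution and then fixing $\delta$ by the displayed relation closes the argument, while the difference estimates give uniqueness and the bound $\|\nabla Iu\|_{X^{0,\frac12+}[0,\delta]}\le\sqrt M\,\|\nabla Iu_0\|_{L^2}$ holds by construction of the ball. I expect the main obstacle to be the multilinear estimates themselves, carried out uniformly in $N$ at the homogeneous $\dot H^1$-level. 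The delicate points are that $\nabla Iu$ controls $u$ only in $L^6$-type norms and never in $L^2$ near $\xi=0$, so the homogeneous Sobolev and Strichartz estimates must be used throughout in place of their inhomogeneous counterparts; that the interpolation lemma for $I$ must place $\nabla I$ on the highest frequency while keeping the ratios of $m_N$ among factors of different size under control; and that the decisive factor $N^{-2}$ in the high-frequency interactions has to be produced by balancing the two derivatives gained from $\langle D\rangle^{-2}$ against the loss of the $I$-smoothing above frequency $N$, using that the part of $u$ with frequencies $\gtrsim N$ has $L^2$-norm $\lesssim N^{-1}\|\nabla Iu\|_{L^2}$.
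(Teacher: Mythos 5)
Your proposal follows essentially the same route as the paper: a contraction for $\nabla Iu$ in $X^{0,\frac12+}[0,\delta]$ with the convolution with $W$ replaced by $\langle D\rangle^{-2}$, the cubic and quadratic terms handled by a case analysis over frequency regimes in which each factor at frequency $\gtrsim N$ contributes a gain $N^{-1}$ through $1/(m(\xi_i)|\xi_i|)$, low-frequency factors absorbed via the homogeneous embedding $\|D^{-1}u_i\|_{L^6}\lesssim\|u_i\|_{L^2}$, and the $\delta$-powers produced by H\"older in time — exactly the mechanisms of the paper's multiplier estimate for $M(\xi_1,\xi_2,\xi_3)$ and its seven cases. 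The one caveat is that the multilinear estimates, which constitute essentially the entire body of the paper's proof, are outlined rather than executed here; but every mechanism you invoke is the one the paper actually uses, and they close as you predict.
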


\begin{proof}
The cubic term in the nonlinearity will be estimated as follows 
(dropping $[0,\delta]$ from the notation):
$$
\| \nabla I(W* (u_1 u_2)u_3)\|_{X^{0,-\frac{1}{2}++}} \lesssim
(\frac{\delta^{\frac{1}{2}-}}{N^2} + \delta^{1-}) 
\prod_{i=1}^3 \|\nabla Iu_i\|_{X^{0,\frac{1}{2}+}} \,. 
$$
This follows from
\begin{align*}
A & := \int_0^{\delta} \int_* M(\xi_1,\xi_2,\xi_3) 
 \prod_{i=1}^3 \widehat{u}_i(\xi_i,t) \widehat{\psi}(\xi_4,t) \,d\xi_1 \,d\xi_2 \,d\xi_3 \,d\xi_4 dt \\
& \lesssim (\frac{\delta^{\frac{1}{2}-}}{N^2} 
+ \delta^{1-}) \prod_{i=1}^3 \|u_i\|_{X^{0,\frac{1}{2}+}}\|\psi\|_{X^{0,\frac{1}{2}+}} \,,
\end{align*}
where
$$
 M(\xi_1,\xi_2,\xi_3) := \frac{m(\xi_1+\xi_2+\xi_3)}{m(\xi_1)m(\xi_2)m(\xi_3)}
 \cdot \frac{|\xi_1 + \xi_2 + \xi_3|}{\langle \xi_1 
+ \xi_2 \rangle^2 |\xi_1||\xi_2||\xi_3|} \,, $$
and * denotes integration over the region $\{\sum_{i=1}^4 \xi_i = 0\}$. 
We assume here and in the following again  without loss of generality
 that the Fourier transforms are nonnegative, and also without loss of generality
 that $|\xi_1| \ge |\xi_2|$. We again used the property 
$|\widehat{W}(\xi)| \lesssim \langle\xi \rangle^{-2}$.

We make a case by case analysis depending on the relative size of the frequencies. 

\noindent\textbf{Case 1:} $|\xi_1| \ge |\xi_2| \ge |\xi_3| \gtrsim N$. \\
\textbf{1a.} $|\xi_1 + \xi_2 + \xi_3| \ge N$. 
We obtain
\begin{align*}
M(\xi_1,\xi_2,\xi_3) 
& \lesssim \prod_{i=1}^3 (\frac{|\xi_i|}{N})^{1-s} \frac{N^{1-s}}{|\xi_1+\xi_2+\xi_3|^{1-s}} \frac{|\xi_1+\xi_2+\xi_3|}{|\xi_1||\xi_2||\xi_3| \langle \xi_1 + \xi_2 \rangle^2} \\
&\lesssim \frac{1}{N^{2(1-s)}} \frac{|\xi_1|^s}{|\xi_1|^s|\xi_2|^s|\xi_3|^s \langle \xi_1 + \xi_2 \rangle^2} \lesssim \frac{1}{N^2 \langle \xi_1 + \xi_2 \rangle^2} \,.
\end{align*}
This implies by Sobolev's embedding and Strichartz' estimates:
\begin{align*}
A
& \lesssim \frac{1}{N^2} \| \langle D \rangle^{-2}(u_1 u_2)\|_{L_t^{\infty} L_x^3} \|u_3\|_{L_t^2 L_x^6} \|\psi\|_{L_t^2 L_x^2} \\
& \lesssim \frac{1}{N^2} \|u_1 u_2\|_{L_t^{\infty} L_x^1} \|u_3\|_{L_t^2 L_x^6} \delta^{\frac{1}{2}-} \|\psi\|_{L_t^{\infty -} L_x^2} \\
& \lesssim \frac{1}{N^2} \delta^{\frac{1}{2}-} \prod_{i=1}^3 \|u_i\|_{X^{0,\frac{1}{2}+}} \|\psi\|_{X^{0,\frac{1}{2}--}} \,.
\end{align*}
\textbf{1b.} $|\xi_1+\xi_2+\xi_3| \le N$. 
We have
\begin{align*}
M(\xi_1,\xi_2,\xi_3) & \lesssim \prod_{i=1}^3 (\frac{|\xi_i|}{N})^{1-s} \frac{N}{|\xi_1||\xi_2||\xi_3| \langle \xi_1 + \xi_2 \rangle^2} \\
&\lesssim \frac{N}{N^{3(1-s)}|\xi_1|^s|\xi_2|^s|\xi_3|^s \langle \xi_1 + \xi_2 \rangle^2} \lesssim \frac{1}{N^2 \langle \xi_1 + \xi_2 \rangle^2}
\end{align*}
as in case 1a. 

\noindent\textbf{Case 2:} $|\xi_3| \ge |\xi_1| \ge |\xi_2|$. 
This case can be treated similarly as case 1.

\noindent\textbf{Case 3:} $|\xi_1| \ge |\xi_2| \gtrsim N \ge |\xi_3|$. \\
\textbf{3a.} $|\xi_1 + \xi_2 + \xi_3| \ge N$. 
We obtain
\begin{align*}
M(\xi_1,\xi_2,\xi_3) 
& \lesssim \frac{|\xi_1|^{1-s}}{N^{1-s}} \frac{|\xi_2|^{1-s}}{N^{1-s}} \frac{N^{1-s}}{|\xi_1+\xi_2 + \xi_3|^{1-s}}
\frac{|\xi_1+\xi_2+\xi_3|}{|\xi_1||\xi_2||\xi_3| \langle \xi_1 + \xi_2 \rangle^2} \\
& \lesssim \frac{|\xi_1|^s}{|\xi_1|^s |\xi_2|^s |\xi_3| N^{1-s} \langle \xi + \xi_2 \rangle^2} \lesssim \frac{1}{N |\xi_3| \langle \xi_1 + \xi_2 \rangle^2} \,,
\end{align*}
so that
\begin{align*}
A 
& \lesssim \frac{1}{N} \|\langle D \rangle^{-2}(u_1 u_2)\|_{L_t^{\infty} L_x^3} \|D^{-1} u_3\|_{L_t^{\infty} L_x^6} \|\psi\|_{L_t^1 L_x^2} \\
& \lesssim \frac{1}{N} \delta^{1-} \|u_1 u_2\|_{L_t^{\infty} L_x^1} \| u_3\|_{L_t^{\infty} L_x^2} \|\psi\|_{L_t^{\infty -} L_x^2} \\
& \lesssim \frac{1}{N} \delta^{1-}  \prod_{i=1}^3 \|u_i\|_{X^{0,\frac{1}{2}+}} \|\psi\|_{X^{0,\frac{1}{2}--}} \,.
\end{align*}
\textbf{3b.} $|\xi_1 + \xi_2 + \xi_3| \le N$. 
Similarly we obtain
$$
M(\xi_1,\xi_2,\xi_3)  \lesssim \frac{|\xi_1|^{1-s}}{N^{1-s}} \frac{|\xi_2|^{1-s}}{N^{1-s}}
\frac{N}{|\xi_1||\xi_2||\xi_3| \langle \xi_1 + \xi_2 \rangle^2}
  \lesssim \frac{1}{N |\xi_3| \langle \xi_1 + \xi_2 \rangle^2}
$$
as in case 3a.

\noindent\textbf{Case 4:} $|\xi_1|,|\xi_3| \gtrsim N \gtrsim |\xi_2|$.
Similarly as in case 3 we obtain
$$ 
M(\xi_1,\xi_2,\xi_3) \lesssim \frac{1}{N |\xi_2| \langle \xi_1 + \xi_2 \rangle^2} \,, 
$$
so that by Sobolev's embedding and Strichartz' estimates
\begin{align*}
A & \lesssim \frac{1}{N} \|\langle D \rangle^{-2}(u_1 D^{-1} u_2)\|_{L_t^{\infty -} L_x^{\infty}} \| u_3\|_{L_t^{\infty} L_x^2} \|\psi\|_{L_t^{1+} L_x^2} \\
& \lesssim \frac{1}{N} \delta^{1-} \|u_1 D^{-1} u_2\|_{L_t^{\infty -} L_x^{\frac{3}{2}+}} \| u_3\|_{L_t^{\infty} L_x^2} \|\psi\|_{L_t^{\infty -} L_x^2} \\
& \lesssim \frac{1}{N} \delta^{1-} \|u_1\|_{L_t^{\infty -} L_x^{2+}} \| D^{-1} u_2\|_{L_t^{\infty} L_x^6} \| u_3\|_{L_t^{\infty} L_x^2} \|\psi\|_{L_t^{\infty -} L_x^2} \\
& \lesssim \frac{1}{N} \delta^{1-}  \prod_{i=1}^3 \|u_i\|_{X^{0,\frac{1}{2}+}} \|\psi\|_{X^{0,\frac{1}{2}--}} \,.
\end{align*}

\noindent\textbf{Case 5:} $ |\xi_3| \gtrsim N \gg |\xi_1|\ge|\xi_2|$. 
We have
\begin{align*}
M(\xi_1,\xi_2,\xi_3) & \lesssim  \frac{|\xi_3|^{1-s}}{N^{1-s}} \frac{N^{1-s}}{|\xi_1+\xi_2 + \xi_3|^{1-s}}
\frac{|\xi_1+\xi_2+\xi_3|}{|\xi_1||\xi_2||\xi_3| \langle \xi_1 + \xi_2 \rangle^2} \\
& \lesssim \frac{1}{|\xi_1| |\xi_2| \langle \xi_1 + \xi_2 \rangle^2}  \,,
\end{align*}
leading to
\begin{align*}
A & \lesssim \|\langle D \rangle^{-2}(D^{-1}u_1 D^{-1} u_2)\|_{L_t^{\infty} L_x^{\infty}} \| u_3\|_{L_t^{\infty} L_x^2} \|\psi\|_{L_t^{1} L_x^2} \\
& \lesssim \delta^{1-} \|D^{-1}u_1 D^{-1} u_2\|_{L_t^{\infty} L_x^{3}} \| u_3\|_{L_t^{\infty} L_x^2} \|\psi\|_{L_t^{\infty -} L_x^2} \\
& \lesssim  \delta^{1-} \|D^{-1}u_1\|_{L_t^{\infty} L_x^6} \| D^{-1} u_2\|_{L_t^{\infty} L_x^6} \| u_3\|_{L_t^{\infty} L_x^2} \|\psi\|_{L_t^{\infty -} L_x^2} \\
& \lesssim  \delta^{1-}  \prod_{i=1}^3 \|u_i\|_{X^{0,\frac{1}{2}+}} \|\psi\|_{X^{0,\frac{1}{2}--}} \,.
\end{align*}

\noindent\textbf{Case 6:} $|\xi_1| \gtrsim N \gg |\xi_2|,|\xi_3|$. 
Similarly as in case 5 we obtain
$$
M(\xi_1,\xi_2,\xi_3)  \lesssim \frac{1}{|\xi_2| |\xi_3| \langle \xi_1
 + \xi_2 \rangle^2}  \,,
$$
which implies
\begin{align*}
A & \lesssim \delta^{1-} \|\langle D \rangle^{-2}(u_1 D^{-1} u_2)\|_{L_t^{\infty} L_x^3} \|D^{-1} u_3\|_{L_t^{\infty} L_x^6} \|\psi\|_{L_t^{\infty -} L_x^2} \\
& \lesssim \delta^{1-} \|u_1 D^{-1} u_2\|_{L_t^{\infty -} L_x^{3/2}} \| u_3\|_{L_t^{\infty} L_x^2} \|\psi\|_{L_t^{\infty -} L_x^2} \\
& \lesssim  \delta^{1-} \|u_1\|_{L_t^{\infty} L_x^2} \| D^{-1} u_2\|_{L_t^{\infty} L_x^6} \| u_3\|_{L_t^{\infty} L_x^2} \|\psi\|_{L_t^{\infty -} L_x^2} \\
& \lesssim  \delta^{1-}  \prod_{i=1}^3 \|u_i\|_{X^{0,\frac{1}{2}+}} \|\psi\|_{X^{0,\frac{1}{2}--}} \,.
\end{align*}

\noindent \textbf{Case 7:} $N \gg |\xi_1|,|\xi_2|,|\xi_3|$. 
We easily obtain
$$
M(\xi_1,\xi_2,\xi_3)  \lesssim \frac{|\xi_1+\xi_2+\xi_3|}{|\xi_1||\xi_2||\xi_3|
 \langle \xi_1 + \xi_2 \rangle^2} \lesssim \frac{1}{|\xi_2| |\xi_3| \langle \xi_1 
+ \xi_2 \rangle^2} \,\,\,{\mbox or}\, \lesssim \frac{1}{|\xi_1| |\xi_2| \langle \xi_1 
+ \xi_2 \rangle^2}\,, 
$$
which can be handled like case 6 or case 5. This completes the claimed estimate 
for the cubic term.

Next we consider the quadratic terms in the nonlinearity. 
They turn out to be less critical. First we prove the estimate
$$ 
\|\nabla I W* (u_1 u_2)\|_{X^{0,-\frac{1}{2}++}} \lesssim
(\frac{\delta^{\frac{1}{2}-}}{N} 
+ \delta^{1-})\|\nabla Iu_1\|_{X^{0,\frac{1}{2}+}} 
\|\nabla Iu_2\|_{X^{0,\frac{1}{2}+}} \,. 
$$
This follows from
\begin{align*}
B & := \int_0^{\delta} \int_* M(\xi_1,\xi_2) \prod_{i=1}^2 \widehat{u}_i(\xi_i,t) \widehat{\psi}(\xi_3,t) \,d\xi_1 \,d\xi_2 \,d\xi_3 dt \\
& \lesssim (\frac{\delta^{\frac{1}{2}-}}{N} + \delta^{1-}) \prod_{i=1}^2 \|u_i\|_{X^{0,\frac{1}{2}+}}\|\psi\|_{X^{0,\frac{1}{2}+}} \,,
\end{align*}
where
$$
 M(\xi_1,\xi_2) := \frac{m(\xi_1+\xi_2)}{m(\xi_1)m(\xi_2)} \cdot 
\frac{|\xi_1 + \xi_2|}{\langle \xi_1 + \xi_2 \rangle^2 |\xi_1||\xi_2|} \,, 
$$
and * denotes integration over the region $\{\sum_{i=1}^3 \xi_i = 0\}$. 
We assume without loss of generality $|\xi_1| \ge |\xi_2|$. 

\noindent\textbf{Case 1:} $|\xi_1| \ge |\xi_2| \ge N$. \\
\textbf{1a.} $|\xi_1 + \xi_2| \ge N$. 
We obtain
\begin{align*}
M(\xi_1,\xi_2) & \lesssim \frac{|\xi_1|^{1-s}}{N^{1-s}} \frac{|\xi_2|^{1-s}}{N^{1-s}} \frac{N^{1-s}}{|\xi_1+\xi_2|^{1-s}}
\frac{|\xi_1+\xi_2|}{|\xi_1||\xi_2| \langle \xi_1 + \xi_2 \rangle^2} \\
& \lesssim \frac{|\xi_1 + \xi_2|^s}{|\xi_1|^s |\xi_2|^s N^{1-s} \langle \xi_1 + \xi_2 \rangle^2} \lesssim \frac{1}{N \langle \xi_1 + \xi_2 \rangle^2} \,,
\end{align*}
so that by Strichartz' estimates
\begin{align*}
B & \lesssim \frac{1}{N} \|\langle D \rangle^{-2}(u_1 u_2)\|_{L_t^2 L_x^2} \|\psi\|_{L_t^2 L_x^2} \\
& \lesssim \frac{1}{N} \|u_1 u_2\|_{L_t^2 L_x^{3/2}} \|\psi\|_{L_t^2 L_x^2} \\
& \lesssim \frac{1}{N} \delta^{\frac{1}{2}-} \|u_1\|_{L_t^{\infty} L_x^2} \|u_2\|_{L_t^2 L_x^6} \|\psi\|_{L_t^{\infty -} L_x^2} \\
& \lesssim \frac{1}{N} \delta^{\frac{1}{2}-} \|u_1\|_{X^{0,\frac{1}{2}+}} \|u_2\|_{X^{0,\frac{1}{2}+}} \|\psi\|_{X^{0,\frac{1}{2}--}} \,.
\end{align*}
\textbf{1b.} $|\xi_1 + \xi_2| \le N$.
This case is similar to case 1a. 

\noindent\textbf{Case 2:} $ N \gtrsim |\xi_2|$ and $|\xi_1| \gg |\xi_2|$. 
One has
$$
 M(\xi_1,\xi_2) \lesssim \frac{|\xi_1 + \xi_2|}{\langle \xi_1
 + \xi_2 \rangle^2 |\xi_1||\xi_2|} \lesssim \frac{1}{|\xi_2| \langle \xi_1
 + \xi_2 \rangle^2} \,, 
$$
so that
\begin{align*}
B & \lesssim \|\langle D \rangle^{-2}(u_1 D^{-1}u_2)\|_{L_t^2 L_x^2} \|\psi\|_{L_t^2 L_x^2} \\
& \lesssim \|u_1 D^{-1}u_2\|_{L_t^2 L_x^{3/2}} \|\psi\|_{L_t^2 L_x^2} \\
& \lesssim \delta^{1-} \|u_1\|_{L_t^{\infty} L_x^2} \|D^{-1}u_2\|_{L_t^{\infty} L_x^6} \|\psi\|_{L_t^{\infty -} L_x^2} \\
& \lesssim \delta^{1-} \|u_1\|_{X^{0,\frac{1}{2}+}} \|u_2\|_{X^{0,\frac{1}{2}+}} \|\psi\|_{X^{0,\frac{1}{2}--}} \,.
\end{align*}

\noindent\textbf{Case 3:} $ N \ge |\xi_1| \sim |\xi_2|$.
This case can be handled like case 2.

Finally we show
$$ 
\|\nabla I((W*u_1) u_2)\|_{X^{0,-\frac{1}{2}++}} \lesssim
 \delta^{1-}\|\nabla Iu_1\|_{X^{0,\frac{1}{2}+}} 
\|\nabla Iu_2\|_{X^{0,\frac{1}{2}+}} \,.
 $$
Here $B$ is as in case 2 with
$$
 M(\xi_1,\xi_2) := \frac{m(\xi_1+\xi_2)}{m(\xi_1)m(\xi_2)} \cdot \frac{|\xi_1 
+ \xi_2|}{\langle \xi_1  \rangle^2 |\xi_1||\xi_2|} \,. 
$$
Because the estimates are similar to the previous case we only consider
 the most critical low frequency cases. 

\noindent\textbf{Case 1:} $|\xi_1| \gg |\xi_2|$ and $N \ge |\xi_1|$ 
(or $N \gg |\xi_1| \sim |\xi_2|$). 
The estimate
$$
 M(\xi_1,\xi_2) \lesssim \frac{1}{\langle \xi_1 \rangle^2 |\xi_1|}
 $$
implies
\begin{align*}
B & \lesssim \|\langle D \rangle^{-2}D^{-1}u_1\|_{L_{x,t}^{\infty}} \|u_2\|_{L_t^2 L_x^2} \|\psi\|_{L_t^2 L_x^2} \\
& \lesssim \|u_1\|_{L_t^{\infty} L_x^2} \|u_2\|_{L_t^2 L_x^2} \|\psi\|_{L_t^2 L_x^2}  \\
& \lesssim \delta^{1-} \|u_1\|_{X^{0,\frac{1}{2}+}} \|u_2\|_{X^{0,\frac{1}{2}+}} \|\psi\|_{X^{0,\frac{1}{2}--}} \,.
\end{align*}

\noindent\textbf{Case 2:} $|\xi_1| \gg |\xi_2|$ and $N \ge |\xi_2|$. 
The estimate
$$ 
M(\xi_1,\xi_2) \lesssim \frac{1}{\langle \xi_1 \rangle^2 |\xi_2|}
 $$
implies
\begin{align*}
B & \lesssim \|\langle D \rangle^{-2}u_1\|_{L_t^{\infty} L_x^3} \|D^{-1}u_2\|_{L_t^2 L_x^6} \|\psi\|_{L_t^2 L_x^2} \\
& \lesssim \delta^{1-} \|u_1\|_{L_t^{\infty} L_x^2} \|u_2\|_{L_t^{\infty} L_x^2} \|\psi\|_{L_t^{\infty -} L_x^2}  \\
& \lesssim \delta^{1-} \|u_1\|_{X^{0,\frac{1}{2}+}} \|u_2\|_{X^{0,\frac{1}{2}+}} \|\psi\|_{X^{0,\frac{1}{2}--}} \,.
\end{align*}

We remark that similar estimates can be given for the difference terms in order 
to use Banach's fixed point theorem. In order to get a contraction we have 
to fulfill the estimates
$$ 
(\delta^{\frac{1}{2}-}N^{-2} + \delta^{1-}) \|\nabla Iu_0\|_{L^2}^2 \ll 1 \quad
\text{and} \quad (\delta^{\frac{1}{2}-}N^{-1} 
+ \delta^{1-}) \|\nabla Iu_0\|_{L^2} \ll 1 \,. 
$$
The latter requirement is weaker, so that the claimed result follows.
\end{proof}

\noindent\textbf{Remark:} We want to iterate this local existence theorem 
with time steps of equal length until we reach a given (large) time $T$.
 To achieve this we need to control
\begin{equation} \label{16}
\|\nabla Iu(t)\|_{L^2} \le c(T)  \quad \forall \, 0 \le t \le T \,.
\end{equation}
This will be shown under the assumption $u_0 \in H^s$ with $ s > 1/2 $.

\section{Proof of Theorem \ref{Theorem 1.3}}

In this section we first show that the bound \eqref{16} implies global
 well-posedness and after that we derive such a bound from the estimates 
for the modified energy $E(Iu)$ in the next section.

\begin{proof}[Proof of Theorem \ref{Theorem 1.3}]
So let us assume for the moment that \eqref{16} holds. This means that on 
any existence interval $[0,T]$ we have an a-priori bound (for fixed $N$) of
\begin{equation}\label{17}
 \|\nabla Iu(t)\|_{L^2} \sim \| |\xi| \widehat{u}(\xi,t)\|_{L^2(\{|\xi| \le N\})} 
+ \||\xi|^s \widehat{u}(\xi,t)\|_{L^2(\{|\xi| \ge N\})} N^{1-s} \,,
 \end{equation}
especially
\begin{equation} \label{17'}
\| |\xi|^s \widehat{u}(\xi,t)\|_{L^2(\{|\xi| \ge 1\})} \le c(T) \,.
\end{equation}
If we can show that this implies an a-priori bound for $\|u(t)\|_{L^2}$,
 which is done in the following lemma, we immediately get an a-priori
 bound for $\|u(t)\|_{H^s}$ , $0 \le t \le T$ , thus a unique global solution
in $X^{s,\frac{1}{2}+}[0,T] \subset C^0([0,T],H^s)$ for any $T$ using our 
local well-posedness result (Theorem \ref{Theorem 1.2}), which is also unique 
in this latter space by Theorem \ref{Theorem 1.1}.
\end{proof}

\begin{lemma}
Assume \eqref{16} and $s \ge 1/2$. On any existence interval $[0,T]$ 
of the solution $u \in X^{s,\frac{1}{2}+}[0,T]$ we have $ \|u(t)\|_{L^2} \le c(T)$.
\end{lemma}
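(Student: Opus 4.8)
The plan is to derive a differential inequality for $y(t):=\|u(t)\|_{L^2}^2$ of the form $y'(t)\le c(T)(1+y(t))$ and to close the argument by Gronwall's inequality, which yields $y(t)\le(y(0)+c(T)T)e^{c(T)T}=c(T)$ on $[0,T]$. To compute $y'$ I would use \eqref{4} in the form $u_t=-i\Delta u+iF(u)$, so that $y'(t)=2\operatorname{Re}\int\bar u\,u_t\,dx$. The Laplacian term $2\operatorname{Re}\int\bar u(-i\Delta u)\,dx=2\operatorname{Re}(i\|\nabla u\|_{L^2}^2)=0$ drops out, and since $W$ is real and even the factor $g:=W*(|u|^2+2\operatorname{Re} u)$ is real-valued; a short computation then gives $\operatorname{Im}(\bar u F(u))=-g\operatorname{Im} u$, whence
\[
y'(t)=2\int \operatorname{Im} u\,\bigl(W*(|u|^2+2\operatorname{Re} u)\bigr)\,dx .
\]
This identity is first established for smooth solutions and then justified for $u\in C^0([0,T],H^s)$ by a standard regularization and density argument.

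The quadratic contribution $4\int\operatorname{Im} u\,(W*\operatorname{Re} u)\,dx$ is harmless: by Young's inequality (with $W\in L^1$ from (A1)) it is bounded by $4\|W\|_{L^1}\|u\|_{L^2}^2$, which is already of the desired form. The cubic contribution $2\int\operatorname{Im} u\,(W*|u|^2)\,dx$ is the heart of the matter, and I would estimate it by $2\|u\|_{L^2}\|W*|u|^2\|_{L^2}$. The naive bound $\|W*|u|^2\|_{L^2}\lesssim\|u\|_{L^4}^2\lesssim\|u\|_{L^2}^2$ only gives $y'\lesssim y^{3/2}$, which permits finite-time blow-up and is therefore useless; the goal is to show instead that $\|W*|u|^2\|_{L^2}\lesssim c(T)(1+\|u\|_{L^2})$, so that the cubic term is at most quadratic in $\|u\|_{L^2}$.

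To this end I would split $u=u_{\mathrm{lo}}+u_{\mathrm{hi}}$ into its frequency parts $|\xi|<1$ and $|\xi|\ge1$, exploiting the hypothesis \eqref{16} via \eqref{17}, \eqref{17'}: since $1\le N$, the low part obeys $\|\nabla u_{\mathrm{lo}}\|_{L^2}=\||\xi|\widehat u\|_{L^2(|\xi|\le1)}\le c(T)$, while the high part obeys $\|u_{\mathrm{hi}}\|_{L^2}\le\||\xi|^s\widehat u\|_{L^2(|\xi|\ge1)}\le c(T)$ (here $s\ge1/2\ge0$ suffices). Expanding $|u|^2=|u_{\mathrm{lo}}|^2+2\operatorname{Re}(u_{\mathrm{lo}}\overline{u_{\mathrm{hi}}})+|u_{\mathrm{hi}}|^2$ and applying Young's inequality with $W\in L^1\cap L^2$ (the $L^2$-membership coming from Hausdorff--Young as in the proof of Theorem \ref{Theorem 1.1}), the three pieces are controlled by $\|W*|u_{\mathrm{lo}}|^2\|_{L^2}\le\|W\|_{L^1}\|u_{\mathrm{lo}}\|_{L^4}^2\lesssim c(T)^{3/2}\|u\|_{L^2}^{1/2}$ (using Gagliardo--Nirenberg $\|u_{\mathrm{lo}}\|_{L^4}^2\lesssim\|u_{\mathrm{lo}}\|_{L^2}^{1/2}\|\nabla u_{\mathrm{lo}}\|_{L^2}^{3/2}$), by $\|W*(u_{\mathrm{lo}}\overline{u_{\mathrm{hi}}})\|_{L^2}\le\|W\|_{L^2}\|u_{\mathrm{lo}}\|_{L^2}\|u_{\mathrm{hi}}\|_{L^2}\lesssim c(T)\|u\|_{L^2}$, and by $\|W*|u_{\mathrm{hi}}|^2\|_{L^2}\le\|W\|_{L^2}\|u_{\mathrm{hi}}\|_{L^2}^2\lesssim c(T)^2$. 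Summing gives $\|W*|u|^2\|_{L^2}\lesssim c(T)(1+\|u\|_{L^2})$, hence the cubic term is $\lesssim c(T)(1+\|u\|_{L^2}^2)$.

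Combining the quadratic and cubic estimates yields $y'(t)\le c(T)(1+y(t))$, and Gronwall's inequality completes the proof. I expect the decisive step — and the only genuine obstacle — to be the cubic term: the crucial observation is that although $\|u_{\mathrm{lo}}\|_{L^2}$ is precisely the quantity we are trying to bound, its gradient is already controlled by \eqref{16}, so that Gagliardo--Nirenberg lowers the power of $\|u\|_{L^2}$ from $3$ to $3/2$, while every factor carrying high frequencies is bounded in $L^2$ thanks to \eqref{17'}. It is this interplay between the low-frequency gradient bound and the high-frequency $L^2$ bound, rather than any single estimate, that prevents finite-time blow-up of $y$.
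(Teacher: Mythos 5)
Your proof is correct and follows essentially the same route as the paper: the same differential identity for $\|u(t)\|_{L^2}^2$, the same low/high frequency splitting at scale $1$ exploiting \eqref{16}--\eqref{17'} together with Gagliardo--Nirenberg to reduce the cubic term to a quadratic one, and Gronwall to close. The only (inessential) difference is in the intermediate H\"older step for the cubic term: the paper estimates $\int |W*|u|^2|\,|u|\,dx \le \|W\|_{L^1}\|u\|_{L^3}^3$ and then bounds $\|u\|_{L^3}^3$, using only $W\in L^1$, whereas you pair against $\|W*|u|^2\|_{L^2}$ and invoke $W\in L^2$ (legitimately available from (A1) via Hausdorff--Young, as the paper itself uses in the proof of Theorem \ref{Theorem 1.1}).
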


\begin{proof}
We  decompose $\widehat{u} = \widehat{u_1} + \widehat{u_2}$ smoothly with
 $\operatorname{supp} \widehat{u_1} \subset \{ |\xi| \le 2 \}$ and 
$ \operatorname{supp}  \widehat{u_2} \subset\{|\xi| \ge 1\}$. 
Then we have by Gagliardo-Nirenberg
\begin{align*}
\|u\|_{L^3} &\leq   \|u_1\|_{L^3} + \|u_2\|_{L^3} \lesssim \|\nabla
u_1\|_{L^2}^{1/2} \|u_1\|_{L^2}^{1/2} + \| |D|^{1/2}
u_2\|_{L^2} \\
& \lesssim   \|\nabla u_1\|_{L^2}^{1/2} \|u_1\|_{L^2}^{1/2} +
\| |D|^s u_2\|_{L^2}^{\frac{1}{2s}} \|u_2\|_{L^2}^{1-\frac{1}{2s}} \\
& \lesssim   \|\nabla u_1\|_{L^2}^2 + \|u_1\|_{L^2}^{\frac{2}{3}} +
\|u_2\|_{L^2}^{\frac{2}{3}}+ \| |D|^s u_2\|_{L^2}^{\frac{2}{3-2s}} \,,
\end{align*}
so that by \eqref{16},\eqref{17} and \eqref{17'} we obtain on $[0,T]$:
\begin{align*}
\|u(t)\|_{L^3}^3 
& \lesssim  \||\xi| \widehat{u_1}(\xi,t)\|_{L^2}^6 +
\|u_1(t)\|_{L^2}^2 + \|u_2(t)\|_{L^2}^2 + \||\xi|^s
\widehat{u_2}(\xi,t)\|_{L^2}^{\frac{6}{3-2s}}\\
&\leq   c'(T)( \|u(t)\|_{L^2}^2 +1) \,.
\end{align*}
Multiplying the differential equation \eqref{4} with $iu$ and taking the 
real part we obtain by Young's inequality, because $W \in L^1$ is real-valued:
\begin{align*}
\frac{d}{dt} \|u(t)\|_{L^2}^2 
& = \int (W*(|u|^2 + 2 \operatorname{Re} u))u dx + \operatorname{Re} i \int(W*(|u|^2 + 2 \operatorname{Re} u))|u|^2 dx \\
& \lesssim \int |W*(|u|^2)||u| dx + 2\int|W*\operatorname{Re} u||u| dx \\
& \lesssim \|W*|u|^2\|_{L^{3/2}} \|u\|_{L^3} + \|u\|_{L^2}^2 \\
&\lesssim \|u\|_{L^3}^3 + \|u\|_{L^2}^2 \\
& \le c'(T) (\|u(t)\|_{L^2}^2+1) \,,
\end{align*}
so that Gronwall's lemma gives
$$ 
\|u(t)\|_{L^2}^2 + 1  \le (\|u_0\|_{L^2}^2 +1) e^{c'(T)T} 
$$
on $[0,T]$.
\end{proof}

We recall our aim to give an a-priori bound of $\|\nabla Iu(t)\|_{L^2}$ 
(cf. \eqref{16}) on $[0,T]$ for an arbitrarily given $T$. 
We want to show this in the rest of this section as a consequence of 
Proposition \ref{Prop} and the estimates for the modified energy which
 we give in the next section.

Let $N\ge 1$ be a number to be specified later and $s > 1/2$. 
Let data $u_0 \in H^s$ be given. Then we have
\begin{align}  \label{28}
\|\nabla Iu_0\|_{L^2}^2 & \lesssim \| |\xi|\widehat{u_0}(\xi)\|_{L^2(\{|\xi|
\le N\})}^2 
+ \|N^{1-s} |\xi|^s \widehat{u_0}(\xi)\|_{L^2(\{|\xi| \ge N\})}^2  \\
& \lesssim   \|N^{1-s} |\xi|^s \widehat{u_0}(\xi)\|_{L^2(\mathbb{R}^3)}^2 =
N^{2(1-s)} \|u_0\|_{\dot{H}^s}^2 \lesssim N^{2(1-s)} \,.
\end{align}
This implies an estimate for $|E(Iu_0)|$ as follows: we have by Young's inequality, using $W \in L^1$:
$$ |\int (W*(|Iu_0|^2 + 2 \operatorname{Re} Iu_0))(|Iu_0|^2 + 2 \operatorname{Re} Iu_0) dx| \lesssim \|Iu_0\|_{L^4}^4 + \|Iu_0\|_{L^3}^3 + \|Iu_0\|_{L^2}^2 \,. $$
Now by Sobolev's embedding
\begin{align*}
&\|Iu_0\|_{L^4}^4  \\
&\lesssim \|Iu_0\|^4_{\dot{H}^{\frac{3}{4}}} 
 \lesssim \| |\xi|^{\frac{3}{4}} \widehat{u}_0\|^4_{L^2(\{|\xi| \le N\})}
  + \| \frac{N^{1-s}}{|\xi|^{1-s}} |\xi|^{\frac{3}{4}}
  \widehat{u}_0\|^4_{L^2(\{|\xi| \ge N\})} \\
& \lesssim  \|\widehat{u}_0\|^4_{L^2(\{|\xi| \le 1\})} 
 + \| |\xi|^{\frac{3}{4}-s} |\xi|^s \widehat{u}_0\|^4_{L^2(\{1 \le |\xi| \le N\})} 
 +  N^{3-4s} \||\xi|^s  \widehat{u}_0\|^4_{L^2(\{|\xi| \ge N\})} \\
& \lesssim \|u_0\|_{L^2}^4 + \langle N \rangle^{3-4s} \|u_0\|_{\dot{H}^s}^4 
 \lesssim N^{2(1-s)} \|u_0\|_{H^s}^4 \,,
\end{align*}
using in the last line the assumption $s\ge 1/2$.
Moreover
\begin{gather*}
 \|Iu_0\|^3_{L^3} \lesssim \|Iu_0\|^3_{\dot{H}^{1/2}}
 \le \|u_0\|^3_{\dot{H}^{1/2}} \le\|u_0\|_{H^s}^3,\\
\|Iu_0\|^2_{L^2} \le \|u_0\|_{L^2}^2 \,, 
\end{gather*}
so that
$$ 
|E(Iu_0)| \le c_0 N^{2(1-s)} \,.
 $$
The local existence theorem (Prop. \ref{Prop}) implies that there exists a solution $u$ on some time interval $[0,\delta]$ with
\begin{equation} \label{29}
 \|\nabla Iu\|^2_{X^{0,\frac{1}{2}+}[0,\delta]} 
\le M \|\nabla Iu_0\|_{L^2}^2 \le c_0 M N^{2(1-s)+2\epsilon}
\end{equation}
under the assumption $\|\nabla Iu_0\|_{L^2}^2 \le c_0 N^{2(1-s+\epsilon)}$,
 where $\epsilon \ge 0$ and 
\begin{equation} \label{29''}
\delta \sim \frac{1}{N^{2(1-s+\epsilon)}} \,.
\end{equation}
Now we use the results of the next section. We have the following estimate
\begin{equation} \label{29'}
\begin{aligned}
& |E(Iu(\delta))-E(Iu_0)| \\
&\lesssim  (\frac{\delta^{1/2}}{N^{2-}} + \frac{\delta^{1-}}{N^{1-}}
 + \frac{1}{N^{3-}}) \|\nabla Iu\|^4_{X^{0,\frac{1}{2}+}[0,\delta]}
 + \frac{\delta^{1/2}}{N^{2-}} \|\nabla Iu\|^3_{X^{0,\frac{1}{2}+}[0,\delta]} \\
 &\quad  +(\frac{1}{N^{4-}} + \frac{\delta}{N^{2-}})
 \|\nabla Iu\|^6_{X^{0,\frac{1}{2}+}[0,\delta]}
 + \frac{\delta}{N^{2-}} \|\nabla Iu\|^5_{X^{0,\frac{1}{2}+}[0,\delta]} \,.
\end{aligned}
\end{equation}
If we use \eqref{29} and \eqref{29''} we easily see that the decisive term is
$$
\frac{\delta^{1-}}{N^{1-}} \|\nabla Iu\|^4_{X^{0,\frac{1}{2}+}[0,\delta]}
+ \frac{\delta}{N^{2-}} \|\nabla Iu\|^6_{X^{0,\frac{1}{2}+}[0,\delta]}
\lesssim (\frac{N^{4(1-s+\epsilon)}}{N^{1-}}
+ \frac{N^{6(1-s+\epsilon)}}{N^{2-}}) \delta^{1-} \,.
$$
This is the bound for the increment of the modified energy from time $0$
to time $\delta$. Similarly we obtain the same bound for the increment
from time $t=k\delta$ to time $t=(k+1)\delta$ for $0 \le k \le T/{\delta}$,
 $ k \in{\mathbb N}$, provided we have a uniform bound
\begin{equation}\label{30}
\|\nabla Iu(k\delta)\|_{L^2}^2 \le 2 c_0  N^{2(1-s+\epsilon)} \,,
\end{equation}
which implies
\begin{equation}\label{30*}
\|\nabla Iu\|^2_{X^{0,\frac{1}{2}+}[k\delta,(k+1)\delta]}
\le 2 c_0 M N^{2(1-s+\epsilon)}
\end{equation}
by the local existence theorem. The number of iteration steps to reach the given
 time $T$ is $T/{\delta}$, so that the increment of the energy from time $t=0$
to time $t=(k+1)\delta$, $0 \le k \le \frac{T}{\delta}$, is bounded by
$$
\frac{T}{\delta} (\frac{N^{4(1-s+\epsilon)}}{N^{1-}}
+ \frac{N^{6(1-s+\epsilon)}}{N^{2-}}) \delta^{1-} \le c_0 N^{2(1-s)}
$$
independent of $k$, if $T N^{1-} N^{4(1-s+\epsilon)} \ll N^{2(1-s)} $ and
 $T N^{-2+} N^{6(1-s+\epsilon)} \ll N^{2(1-s)}$.
These conditions are fulfilled for $N$ sufficiently large, if
\begin{equation}
\label{epsilon}s > \frac{1}{2}+2\epsilon \Longleftrightarrow
\epsilon < \frac{s-\frac{1}{2}}{2} \,,
\end{equation}
 as one easily calculates. Choosing $\epsilon$ sufficiently small this
condition is fulfilled under our assumption $ s > 1/2 $. We recall again
that we used \eqref{30}. We arrive at
\begin{equation} \label{30'}
|E(Iu(t))| \le 2c_0 N^{2(1-s)} \quad \forall t \in[0,(k+1)\delta] \,, \;
 0\le k \le \frac{T}{\delta} ,  \; k \text{ fixed}.
\end{equation}
Now we consider the cases where either (A1) and (A2) or else (A1) and (A3)
hold separately.

If (A1) and (A2) hold we have  $\widehat{W}(\xi) > 0$, which immediately 
implies that the energy functional is positive definite, 
both terms in \eqref{E'} are namely nonnegative, so that one gets
$$ 
\|\nabla Iu(t)\|_{L^2}^2 \le E(I(u(t)) \le 2c_0 N^{2(1-s)} 
$$
for $ 0\le t \le (k+1)\delta$ and $0 \le k < \frac{T}{\delta}$, 
where $c_0$ is independent of $k$ and where we can choose $\epsilon =0$. 
Remark that on the right-hand side the same constant $2c_0$ appears
as in \eqref{30}.
Thus step by step after $\sim \frac{T}{\delta}$ steps we obtain the desired 
a-priori bound
$$ 
\|\nabla Iu(t)\|_{L^2} \le c(T) \quad \forall 0 \le t \le T \,. 
$$ 
Thus we are done in this case (modulo the results of the next section).

If (A1) and (A3) hold, the energy functional is not necessarily positive 
definite and it is more difficult to obtain a bound for
$\|\nabla Iu(t)\|_{L^2}$ from energy bounds.

We follow the computations of de Laire \cite{L} in this case and obtain
\begin{equation} \label{31}
\begin{aligned}
E(Iu) & = \|\nabla Iu\|^2_{L^2}
+ \frac{1}{2} \int (W*(|Iu|^2 + 2 \operatorname{Re} Iu))(|Iu|^2
+ 2 \operatorname{Re} Iu) dx \\
& =\|\nabla Iu\|^2_{L^2} + \tilde{I}_1(Iu) + \tilde{I}_2(Iu) + \tilde{I}_3(Iu) \,,
 \end{aligned}
\end{equation}
 where
 \begin{gather*}
 \tilde{I}_1(Iu)  := \int (W* \operatorname{Re} Iu) \operatorname{Re} Iu dx \\
 \tilde{I}_2(Iu)  := \frac{1}{2} \int (W* |Iu|^2)(|(Iu)_1|^2 + 4 \operatorname{Re}(Iu)_1) dx \\
\tilde{I}_3(Iu)  := \frac{1}{2} \int (W* |Iu|^2)(|(Iu)_2|^2 + 4 \operatorname{Re}(Iu)_2) dx \,.
\end{gather*}
Here
$$
(Iu)_1 := Iu \chi_{\{|Iu| \le 5\}} \,, \quad
(Iu)_2 := Iu \chi_{\{|Iu| > 5\}} \,.
$$
We used that $W$ is even which implies
$$
\int (W*|Iu|^2) \operatorname{Re} Iu \,dx
= \int (W* \operatorname{Re} Iu) |Iu|^2 dx \,.
$$
Using $W \in L^1(\mathbb{R}^3)$ we easily see that
$$
|\tilde{I}_1(Iu)| +  |\tilde{I}_2(Iu)| \lesssim \|Iu\|^2_{L^2} \,.
$$
Moreover using (A3) we obtain
\begin{align}\label{31'}
\tilde{I}_3(Iu)
& \ge \frac{1}{2} \int(W*|Iu|^2)(|(Iu)_2|^2 - 4 |(Iu)_2|) dx \\
& = \frac{1}{2} \int(W*|Iu|^2)|(Iu)_2|(|(Iu)_2| - 4) dx \\
& \ge \frac{1}{2} \int(W*|Iu|^2)|(Iu)_2| dx =: J_3(Iu) \ge 0 \,.
\end{align}
This implies by \eqref{31}
\begin{equation} \label{31''}
\|\nabla Iu\|^2_{L^2} + \tilde{I}_3(Iu) \le |E(Iu)| + a \|Iu\|^2_{L^2} \,.
\end{equation}
In order to estimate $\|Iu\|^2_{L^2}$ we apply $I$ to the differential
equation \eqref{4}, multiply with $iIu$ and take the real part. This leads to
\begin{equation} \label{32}
\frac{d}{dt} \|Iu\|^2_{L^2} = Im \langle F(Iu)-IF(u),Iu \rangle
- Im \langle F(Iu),Iu \rangle \,.
\end{equation}
Let us consider the first term on the right-hand side. We obtain
\begin{align*}
 Im \langle F(Iu)-IF(u),Iu \rangle
&= Im \langle (W*|Iu|^2)Iu-I((W*|u|^2)u),Iu \rangle \\
&\quad + 2 Im \langle(W*(\operatorname{Re}Iu))Iu
 -I((W*\operatorname{Re} u)u),Iu \rangle \\
&\quad + Im \langle (W*|Iu|^2)-I(W*|u|^2),Iu \rangle \\
&\quad + 2 Im \langle W*(\operatorname{Re} Iu)-I(W*\operatorname{Re} u),Iu \rangle \\
&= Im \langle I(W*|u|^2)-(W*|Iu|^2),Iu \rangle \,.
\end{align*}
Now we claim
\begin{equation} \label{33}
\int_{k\delta}^{(k+1)\delta} \langle I(W*|u|^2)-(W*|Iu|^2),Iu \rangle dt
 \lesssim N^{-2} \delta \|\nabla Iu\|^3_{X^{0,\frac{1}{2}+}[k\delta,(k+1)\delta]} \,.
\end{equation}
Using $|\tilde{W}(\xi)| \lesssim \frac{1}{\langle \xi \rangle^2} $ and defining
$$
M(\xi_1,\xi_2) := \frac{|m(\xi_1+\xi_2)-m(\xi_1)m(\xi_2)|}{m(\xi_1)m(\xi_2)}
\cdot \frac{1}{\langle \xi_1 + \xi_2 \rangle^2 |\xi_1||\xi_2||\xi_3|}
$$
we have to show
$$
 A:= \int_{k\delta}^{(k+1)\delta} \int_* M(\xi_1,\xi_2)
\prod_{i=1}^3 \widehat{u}_i(\xi_i,t) \,d\xi_1 \,d\xi_2 \,d\xi_3 dt
\lesssim N^{-2} \delta \prod_{i=1}^3 \|u_i\|^3_{X^{0,\frac{1}{2}+}} \,,
 $$
where * denotes integration over $\{\sum_{i=1}^3 \xi_i = 0 \}$.
 We assume without loss of generality $|\xi_1| \ge |\xi_2|$.

\noindent \textbf{Case 1:} $|\xi_1| \ge |\xi_2| \gtrsim N$: 
We have
$$
 M(\xi_1,\xi_2) \lesssim (\frac{|\xi_1|}{N})^{1/2}
(\frac{|\xi_2|}{N})^{1/2} \frac{1}{\langle \xi_1
+ \xi_2 \rangle^2 |\xi_1||\xi_2|} \,. 
$$
Thus
\begin{align*}
A & \lesssim N^{-2} \|u_1\|_{L^2_{x,t}} \|u_2\|_{L^2_{x,t}} \| 
 \langle D \rangle^{-2} D^{-1} u_3\|_{L^{\infty}_{x,t}} \\
& \lesssim N^{-2} \delta \|u_1\|_{L^{\infty}_t L^2_x}  
 \|u_2\|_{L^{\infty}_t L^2_x} \|D^{-1} u_3\|_{L^{\infty}_t L_x^6} \\
& \lesssim N^{-2} \delta \prod_{i=1}^3 \|u_i\|_{X^{0,\frac{1}{2}+}} \,.
\end{align*}

\noindent\textbf{Case 2:} $|\xi_1| \gtrsim N \gg |\xi_2|$ 
($ \Longrightarrow |\xi_3| \sim |\xi_1| \gtrsim N$).
By the mean value theorem we obtain
$$
M(\xi_1,\xi_2) \lesssim \frac{|\xi_2|}{|\xi_1|} \frac{1}{\langle \xi_1 
+ \xi_2 \rangle^2 |\xi_1||\xi_2||\xi_3|} \,. 
$$
Thus
\begin{align*}
A  \lesssim N^{-3} \| \langle D \rangle^{-2} (u_1 u_2)\|_{L^1_t L^2_x} 
\|u_3\|_{L^{\infty}_t L^2_x} 
&\lesssim N^{-3} \delta \|u_1 u_2\|_{L^{\infty}_t L^1_x} \|u_3\|_{L^{\infty}_t L^2_x} \\
& \lesssim N^{-3} \delta \prod_{i=1}^3 \|u_i\|_{X^{0,\frac{1}{2}+}} \,,
\end{align*}
which completes the proof of \eqref{33}.

Next we estimate the last term in \eqref{32}. We have
\begin{align*}
&|Im \langle F(Iu), Iu \rangle|  = |Im \int(W*(|Iu|^2 
+ 2 \operatorname{Re} Iu))(1+Iu)I\bar{u} dx | \\
& = |Im \int(W*(|Iu|^2 + 2 \operatorname{Re} Iu))I\bar{u} dx | \\
& \le 2 \int |W* \operatorname{Re} Iu||Iu| dx + \int(W*|Iu|^2)|(Iu)_1| dx 
 + \int (W*|Iu|^2)|(Iu)_2| dx  \\
& \lesssim \|Iu\|^2_{L^2}  + J_3(Iu) \lesssim  \|Iu\|^2_{L^2}  
+ \tilde{I}_3(Iu) \lesssim \|Iu\|_{L^2}^2 + |E(Iu)|
\end{align*}
by \eqref{31'} and \eqref{31''}.

 From \eqref{32} we conclude for $ k\delta \le t \le (k+1)\delta$:
\begin{align*}
& \|Iu(t)\|_{L^2}^2 - \|Iu(k\delta)\|_{L^2}^2 \\
& \lesssim c_1 N^{-2} \delta \|\nabla Iu\|^3_{X^{0,\frac{1}{2}+}[k\delta,(k+1)\delta]} + \int_{k\delta}^{(k+1)\delta} |E(Iu(s))|ds + \int_{k\delta}^t \|Iu(s)\|_{L^2}^2 ds ) \,.
\end{align*}
Now we have
$$
c_1(N^{-2} \delta \|\nabla Iu\|^3_{X^{0,\frac{1}{2}+}[k\delta,(k+1)\delta]} 
\le c_1 (2c_0)^{3/2} N^{-2}\delta N^{3(1-s) + 3\epsilon} 
\le c_2 \delta N^{2(1-s)} \,,
 $$
provided \eqref{30} holds (and therefore \eqref{30*}) and $\epsilon$ is 
sufficiently small. Using the uniform energy bound \eqref{30'} we obtain 
for $t \in [k\delta,(k+1)\delta]$:
$$ 
\|Iu(t)\|_{L^2}^2 \le \|Iu(k\delta)\|_{L^2}^2 + c_2 \delta N^{2(1-s)} 
+ 2c_0 \delta N^{2(1-s)} + c_1 \int_{k\delta}^t \|Iu(s)\|^2_{L^2} ds \,.
 $$
Gronwall's lemma implies
$$
 \sup_{k\delta \le t \le (k+1)\delta} \|Iu(t)\|^2_{L^2} 
\le (\|Iu(k\delta)\|_{L^2}^2 + c_3 \delta N^{2(1-s)}) e^{c_1 \delta} 
$$
under our assumptions \eqref{30'}
$$   
|E(Iu(t))| \le 2c_0 N^{2(1-s)} \quad \text{on} \quad [0,(k+1)\delta] 
$$
and \eqref{30}
$$   
\|\nabla Iu(k\delta)\|_{L^2}^2 \le 2c_0 N^{2(1-s+\epsilon)} \,. 
$$
Here $c_1$ and $c_3$ are independent of $k$. Using the bound for
 $\|Iu(k\delta)\|_{L^2}^2$ this implies
\begin{align*}
 \sup_{k\delta \le t \le (k+1)\delta} \|Iu(t)\|^2_{L^2} 
&\le [(\|Iu((k-1)\delta)\|_{L^2}^2 + c_3 \delta N^{2(1-s)}) e^{c_1 \delta} 
 +c_3 \delta N^{2(1-s)}] e^{c_1 \delta} \\
& = \|Iu((k-1)\delta)\|_{L^2}^2 \, e^{2c_1 \delta}
 + c_3 \delta N^{2(1-s)}( e^{2c_1 \delta} +  e^{c_1 \delta}) \,.
 \end{align*}
Iterating this procedure after $k \le T/\delta$ steps we arrive at
\begin{align*}
\sup_{k\delta \le t \le (k+1)\delta} \|Iu(t)\|_{L^2}^2 
& \le \|Iu_0\|_{L^2}^2 \, e^{\frac{T}{\delta} c_1 \delta} 
  + c_3 \delta N^{2(1-s)} \sum_{l=0}^{\frac{T}{\delta}} (e^{c_1 \delta})^l \\
& \le \|u_0\|_{L^2}^2 \, e^{c_1 T} + c_4 N^{2(1-s)} e^{c_1 T} \\
& \le \frac{c_0}{a} N^{2(1-s+\epsilon)}
\end{align*}
choosing $N$ so large that $e^{c_1 T} \ll N^{\epsilon}$ with a small 
$\epsilon > 0$, which fulfills \eqref{epsilon}, and $N$ also so large, 
that $\|u_0\|_{L^2}^2 \ll N^{2(1-s)}$. We used
$$ 
\sum_{l=0}^{\frac{T}{\delta}} (e^{c_1 \delta})^l
= \frac{(e^{c_1 \delta})^{\frac{T}{\delta}}-1}{e^{c_1 \delta} -1} 
\lesssim \frac{e^{c_1 T}}{\delta} \,. 
$$
This bound for $\|Iu(t)\|_{L^2}$  for $t \in [k\delta,(k+1)\delta]$ implies
 by \eqref{31''},\eqref{30},\eqref{30'}:
\begin{align*}
\|\nabla Iu(t)\|_{L^2}^2 
& \le |E(Iu(t))| + a \|Iu(t)\|_{L^2}^2 \\
& \le 2c_0 N^{2(1-s)} + c_0 N^{2(1-s+\epsilon)} \le 2c_0 N^{2(1-s+\epsilon)}
\end{align*}
for $t \in (k\delta,(k+1)\delta]$ (and choosing $N$ so large, that
 $N^{2\epsilon} \ge 2$), the same bound which we had for $t=k\delta$ (cf. \eqref{30}). 
By iteration we thus get
$$ 
\sup_{0 \le t \le T} \|\nabla Iu(t)\|_{L^2}^2 \le 2c_0 N^{2(1-s+\epsilon)} =: c(T) \,. 
$$
This completes the proof of the a-priori bound for $\|\nabla Iu(t)\|_{L^2}$ for the 
problem under the assumptions (A1) and (A3), so that now \eqref{16} holds in 
both cases. Thus the global well-posedness result is proven 
(modulo the results in the next section).

\section{Estimates for the modified energy}

In order to estimate the increment of the modified energy $E(Iu(t))$  
of a solution $u$ of the Cauchy problem \eqref{4},\eqref{5},\eqref{6} from 
time $t_0$ to time $t_0 + \delta$, say $t_0 = 0$ for ease of notation, 
we have to control its time derivative.
We calculate
\begin{align*}
\frac{d}{dt}E(Iu) &= 2 \operatorname{Re} \langle -\Delta Iu,Iu_t \rangle \\
&\quad + \frac{1}{2} \int (W*(Iu I \bar{u}_t + Iu_t I\bar{u} 
 + 2 \operatorname{Re} Iu_t))(|Iu|^2 + 2 \operatorname{Re} Iu) dx \\
& \quad + \frac{1}{2} \int (W*(|Iu|^2 + 2 \operatorname{Re} Iu))(Iu I \bar{u}_t 
 + I\bar{u} Iu_t + 2 \operatorname{Re} Iu_t) dx \\
& = 2 \operatorname{Re} \langle -\Delta Iu,Iu_t \rangle 
 + 2 \operatorname{Re} \langle (W*(|Iu|^2 
 + 2 \operatorname{Re} Iu))(1+Iu),Iu_t \rangle \,,
\end{align*}
where we used that $W$ is even, so that the second and third term coincide. Now
$$ 
-\Delta Iu = -iIu_t -IF(u) \,,
$$
so that
\begin{align*}
\frac{d}{dt}E(Iu)
& = 2 \operatorname{Re} \langle F(Iu)-IF(u),Iu_t \rangle \\
& = 2 \operatorname{Im} (\langle \nabla(F(Iu)-IF(u)),\nabla Iu \rangle 
 - \langle F(Iu)-IF(u),IF(u) \rangle)
\end{align*}
and
\begin{equation} \label{E}
|\frac{d}{dt}E(Iu)|
\le 2(| \langle \nabla(F(Iu)-IF(u)),\nabla Iu \rangle|
 + |\langle F(Iu)-IF(u),IF(u) \rangle|)
\end{equation}
with (cf. \eqref{6})
$$
F(u) = (1+u)(W*(|u|^2 +2\, Re\, u))\,.
$$
This especially shows the standard energy conservation law (setting $I = id$).

The estimates which now follow are given in terms of bounds of Fourier 
transforms of the corresponding functions. The only property of $W$ which 
we use is the bound $|\widehat{W}(\xi)| \lesssim \langle \xi \rangle^{-2}$, 
so that both cases, namely assuming either (A1) and (A2) or else (A1) and (A3) 
can be handled in the same way. The most critical cases are the terms of 
fourth and third order of the first term on the right-hand side of \eqref{E} 
and the term of sixth order of the second term. In fact we shall refer to 
the estimates in the case of the local Gross-Pitaevskii equation, 
where $\widehat{W}=1$, in our earlier paper \cite{Pe} for the remaining terms
 of lower order on the right-hand side of \eqref{E}.

We start with the terms of highest order in the first term. 
Taking again the time interval $[0,\delta]$ instead of $[k\delta,(k+1)\delta]$ 
just for the ease of notation we claim
\begin{equation} \label{50}
\begin{aligned}
&\big|\int_0^{\delta} \hspace{-0.6em}\langle \nabla((W*|Iu|^2)Iu
- I((W*|u|^2)u)),\nabla Iu \rangle dt \big| \\
&\lesssim (\frac{\delta^{1-}}{N^{1-}} + \frac{\delta^{1/2}}{N^{2-}})
 \| \nabla Iu \|^4_{X^{0,\frac{1}{2}+}[0,\delta]}  .
\end{aligned}
\end{equation}
Here and in the following we use dyadic decompositions with respect to the
space variables $\xi_i$, where $|\xi_i| \sim N_i$ with
$N_i =  2^{k_i}$, $k_i \in{\mathbb Z}$. In order to sum the dyadic parts
at the end we always need a convergence generating factor
$\frac{1\wedge N^{0+}_{min}}{N^{0+}_{max}}$, where $N_{min}$ and $N_{max}$
 is the smallest and the largest of the numbers $N_i$, respectively.
$N_{max} \ge N (\ge 1)$ can be assumed in all cases, because otherwise
our multiplier $M$ is identically zero. We have to take care of low frequencies
 especially, because we need an estimate in terms of $ \nabla Iu $.
Assuming without loss of generality that the Fourier transforms are nonnegative
we have to show:
$$
A:= \int_0^{\delta} \int_* M(\xi_1,\xi_2,\xi_3)
 \prod_{i=1}^4 \widehat{u_i}(\xi,t) \,d\xi_1\dots \,d\xi_4 dt
\lesssim (\frac{\delta^{1-}}{N^{1-}} + \frac{\delta^{1/2}}{N^{2-}})
\prod_{i=1}^4 \|u_i\|_{X^{0,\frac{1}{2}+}[0,\delta]},
$$
where * always denotes integration over $\{\sum \xi_i = 0\}$, and
$$
M(\xi_1,\xi_2,\xi_3)
:= \frac{|m(\xi_1 +\xi_2+\xi_3)-m(\xi_1)m(\xi_2)m(\xi_3)|}{m(\xi_1)m(\xi_2)m(\xi_3)}
\cdot \frac{|\xi_1 + \xi_2 + \xi_3|}{\langle \xi_1
+ \xi_2 \rangle^2 |\xi_1||\xi_2||\xi_3|} \,.
$$

\noindent\textbf{Case 1:} $N_3 \gg N_1 \ge N_2$.
In this case we have $N_3 \sim N_4 \gtrsim N$, $N_2 = N_{min}$ and 
$N_3 \sim N_{max}$. By the mean value theorem we obtain
$$
M(\xi_1,\xi_2,\xi_3) \lesssim \frac{N_1}{N_3} \cdot 
\frac{1}{N_1 |\xi_2| \langle \xi_1 + \xi_2 \rangle^2} 
$$
and by H\"older's inequality, Sobolev's embedding and Strichartz' estimate
 we obtain
\begin{align*}
A 
& \lesssim \frac{1}{N_3} \| \langle D 
 \rangle^{-2}(D^{-1} u_2 u_1)\|_{L^{\infty -}_t L^{\infty -}_x} 
 \|u_3\|_{L^{2+}_t L^{2+}_x} \|u_4\|_{L^2_t L^2_x} \\
& \lesssim \frac{\delta^{1-}}{N_3} \|D^{-1} u_2 u_1\|_{L^{\infty -}_t
  L^{\frac{3}{2}+}_x} \|u_3\|_{L^{\infty}_t L^{2+}_x} \|u_4\|_{L^{\infty}_t L^2_x} \\
& \lesssim \frac{\delta^{1-}}{N_3} \|D^{-1} u_2\|_{L^{\infty}_t L^{6+}_x}
  \|u_1\|_{L^{\infty -}_t L^{2+}_x} \|u_3\|_{L^{\infty}_t L^{2+}_x} 
 \|u_4\|_{L^{\infty}_t L^2_x} \\
& \lesssim \frac{\delta^{1-}(1 \wedge N_{\min}^{0+})}{N_{\max}^{0+} N^{1-}}
 \prod_{i=1}^4 \|u_i\|_{X^{0,\frac{1}{2}+}} \,,
\end{align*}
using $\| D^{-1}u_2\|_{L^{\infty}_t L^{6+}_x} \lesssim 
N_2^{0+} \|u_2\|_{L^{\infty}_t L^2_x}$ and $\|u_3\|_{L^{\infty}_t L^{2+}_x} 
 \lesssim N_3^{0+} \|u_3\|_{L^{\infty}_t L^2_x}$.

\noindent\textbf{Case 2:} $N_1 \gg N_2,N_3$ and $ N_1 \gtrsim N$. 
We have similarly as in case 1:
$$
 M(\xi_1,\xi_2,\xi_3) \lesssim \frac{N_3}{N_1} \cdot \frac{1}{|\xi_2| N_3 
\langle \xi_1 + \xi_2 \rangle^2} 
$$
and get the same estimate as in case 1 interchanging the roles 
of $N_1$ and $N_3$. 

\noindent\textbf{Case 3:} $N_1 \sim N_3$ ($\Longrightarrow N_1,N_3 \gtrsim N$).
In this case we obtain
\begin{align*}
 M(\xi_1,\xi_2,\xi_3)
&\lesssim (\frac{N_1}{N})^{\frac{1}{2}-} (\frac{N_3}{N})^{\frac{1}{2}-} 
 \langle\frac{N_1}{N}\rangle^{\frac{1}{2}-} \frac{1}{|\xi_2||\xi_3| 
 \langle \xi_1 + \xi_2 \rangle^2} \\
&\lesssim \frac{1}{N_3^{0+} N^{1-}} \langle \frac{N_2}{N}\rangle^{\frac{1}{2}-} 
\frac{1}{N_2 \langle \xi_1 + \xi_2 \rangle^2} \,.
 \end{align*}
\textbf{a.} $N_2 \gtrsim N$. We have
\begin{align*}
A & \lesssim \frac{1}{N_3^{0+} N^{2-}} \| \langle D \rangle^{-2}( u_1 u_2)\|_{L^2_t L^{3 -}_x} \|u_3\|_{L^{\infty}_t L^2_x} \|u_4\|_{L^2_t L^{6+}_x} \\
& \lesssim \frac{\delta^{1/2}N_4^{0+}}{N_3^{0+} N^{2-}} \|u_1 u_2\|_{L^{\infty}_t L^{1+}_x} \|u_3\|_{L^{\infty}_t L^2_x} \|u_4\|_{X^{0,\frac{1}{2}+}} \\
& \lesssim \frac{\delta^{1/2}(1 \wedge N_{\min}^{0+})}{N_{\max}^{0+} N^{2-}} \prod_{i=1}^4 \|u_i\|_{X^{0,\frac{1}{2}+}} \,,
\end{align*}
using $ \|u_4\|_{L^2_t L^{6+}_x} \lesssim N_4^{0+} \|u_4\|_{L_t^2 L_x^6} 
\lesssim \|u_4\|_{X^{0,\frac{1}{2}+}}$ by Strichartz' estimate. \\
\textbf{b.} $ N_2 \ll N$.
In this case we obtain
\begin{align*}
A & \lesssim \frac{1}{N_{max}^{0+} N^{1-}} \| \langle D 
 \rangle^{-2}( u_1 D^{-1}u_2)\|_{L^1_t L^{\infty -}_x} \|u_3\|_{L^{\infty}_t L^2_x} 
 \|u_4\|_{L^{\infty}_t L^{2+}_x} \\
& \lesssim \frac{\delta}{N_{\max}^{0+} N^{1-}} \|u_1 D^{-1}u_2\|_{L^{\infty}_t 
 L^{\frac{3}{2}+}_x} \|u_3\|_{L^{\infty}_t L^2_x} \|u_4\|_{L^{\infty}_t L^{2+}_x} \\
& \lesssim \frac{\delta}{N_{\max}^{0+} N^{1-}} \|u_1\|_{L^{\infty}_t L^2_x} 
 \|D^{-1}u_2\|_{L^{\infty}_t L^{6+}_x} \|u_3\|_{L^{\infty}_t L^2_x} 
 \|u_4\|_{L^{\infty}_t L^{2+}_x} \\
& \lesssim \frac{\delta(1 \wedge N_{\min}^{0+})}{N_{\max}^{0+} N^{1-}} 
 \prod_{i=1}^4 \|u_i\|_{X^{0,\frac{1}{2}+}} \,.
\end{align*}

\noindent\textbf{Case 4:} $N_1 \sim N_2 \gtrsim N_3$ 
($\Longrightarrow N_1,N_2 \gtrsim N$).
In this case we obtain
\begin{align*}
 M(\xi_1,\xi_2,\xi_3)
& \lesssim (\frac{N_1}{N})^{\frac{1}{2}-} (\frac{N_2}{N})^{\frac{1}{2}-} 
 \langle\frac{N_3}{N}\rangle^{\frac{1}{2}-} \frac{1}{|\xi_1||\xi_3| 
 \langle \xi_1 + \xi_2 \rangle^2} \\
 &\lesssim \frac{1}{N_{\max}^{0+} N^{1-}} \langle \frac{N_3}{N}
 \rangle^{\frac{1}{2}-} \frac{1}{N_3 \langle \xi_1 + \xi_2 \rangle^2} \,.
\end{align*}
 The case $N_3 \gtrsim N$ is handled like case 3a, whereas in the case 
$N_3 \ll N$ we obtain
\begin{align*}
A & \lesssim \frac{1}{N_{\max}^{0+} N^{1-}} \| \langle D \rangle^{-2}
 ( u_1 u_2)\|_{L^1_t L^{3 -}_x} \|D^{-1}u_3\|_{L^{\infty}_t L^{6+}_x} 
 \|u_4\|_{L^{\infty}_t L^{2+}_x} \\
& \lesssim \frac{\delta (1 \wedge N_{\min}^{0+})}{N_{\max}^{0+} N^{1-}} 
 \|u_1 u_2\|_{L^{\infty}_t L^{1}_x} \|u_3\|_{L^{\infty}_t L^2_x} 
 \|u_4\|_{L^{\infty}_t L^2_x} \\
& \lesssim \frac{\delta (1 \wedge N_{\min}^{0+})}{N_{\max}^{0+} N^{1-}}
 \prod_{i=1}^4 \|u_i\|_{X^{0,\frac{1}{2}+}} \,.
\end{align*}

\noindent\textbf{Case 5:} $N_2 \sim N_3$ ($\Longrightarrow N_1 \gtrsim N_2 \sim N_3$).
If $N_1 \gg N_2$ we are in the situation of case 2, otherwise 
$N_1 \sim N_2 \sim N_3 \gtrsim N$, so that
\begin{align*}
 M(\xi_1,\xi_2,\xi_3)
& \lesssim (\frac{N_1}{N})^{\frac{1}{2}-} (\frac{N_2}{N})^{\frac{1}{2}-} 
 (\frac{N_3}{N})^{\frac{1}{2}-} \frac{1}{N_2 N_3 \langle \xi_1 + \xi_2 \rangle^2} \\
 &\lesssim \frac{1}{N_{\max}^{0+} N^{2-}} \frac{1}{N_2 \langle \xi_1 
+ \xi_2 \rangle^2}
 \end{align*}
 as in case 3a.

 Dyadic summation gives estimate \eqref{50}.

 We next consider the cubic part of the first term on the right-hand side 
of \eqref{E}.  We claim
\begin{equation}  \label{51}
 \big|\int_0^{\delta} \langle \nabla(W*(|Iu|^2)-I(W*|u|^2)),\nabla Iu \rangle dt\big|
 \lesssim \frac{\delta^{1/2}}{N^{2-}}
 \|\nabla Iu\|^3_{X^{0,\frac{1}{2}+}[0,\delta]} \,.
 \end{equation}
We have to show
$$ 
B:= \int_0^{\delta} \int_* M(\xi_1,\xi_2) \prod_{i=1}^3
 \widehat{u_i}(\xi_i,t) \,d\xi_1 \,d\xi_2 \,d\xi_3 dt 
\lesssim \frac{\delta^{1/2}}{N^{2-}}
\prod_{i=1}^3 \|u_i\|_{X^{0,\frac{1}{2}+}[0,\delta]}  
$$
with
$$
M(\xi_1,\xi_2) := \frac{|m(\xi_1+\xi_2)-m(\xi_1)m(\xi_2)|}{m(\xi_1)m(\xi_2)} 
\cdot \frac{|\xi_1 + \xi_2 |}{\langle \xi_1 + \xi_2 \rangle^2 |\xi_1||\xi_2|} \,. 
$$

\noindent\textbf{Case 1:} $ N_1 \ge N_2 \gtrsim N$.
We have
$$ 
M(\xi_1,\xi_2) \lesssim (\frac{N_1}{N})^{1/2} (\frac{N_2}{N})^{1/2}
\frac{1}{N_1 N_2 \langle \xi_1 + \xi_2 \rangle} \,, 
$$
so that by Strichartz' estimate:
\begin{align*}
B & \lesssim \frac{1}{N_{\max}^{0+} N^{2-}} \| \langle D 
 \rangle^{-1}( u_1 u_2)\|_{L^1_t L^{2 -}_x}  \|u_3\|_{L^{\infty}_t L^{2+}_x} \\
& \lesssim \frac{1}{N_{\max}^{0+} N^{2-}} \|u_1 u_2\|_{L^1_t L^{\frac{3}{2}+}_x} 
 \|u_3\|_{L^{\infty}_t L^{2+}_x}  \\
& \lesssim \frac{\delta^{1/2}}{N_{\max}^{0+} N^{2-}}
 \|u_1\|_{L^{\infty}_t L^2_x} \| u_2\|_{L^2_t L^{6+}_x} 
 \|u_3\|_{L^{\infty}_t L^{2+}_x}  \\
& \lesssim \frac{\delta^{1/2} (1 \wedge N_{\min}^{0+})}{N_{\max}^{0+} N^{2-}}
\prod_{i=1}^3 \|u_i\|_{X^{0,\frac{1}{2}+}} \,.
\end{align*}

\noindent\textbf{Case 2:} $ N_1 \ge N \gg N_2$.
Similarly, by the mean value theorem we obtain
$$
 M(\xi_1,\xi_2) \lesssim \frac{N_2}{N_1}  \frac{1}{N_1 N_2 \langle \xi_1 
+ \xi_2 \rangle} \lesssim \frac{1}{N_{\max}^{0+} N^{2-} \langle \xi_1 
+ \xi_2 \rangle} 
$$
leading to the same bound as in case 1, thus \eqref{51} is proven.

Concerning the second cubic term we claim
\begin{equation}  \label{52}
 |\int_0^{\delta} \langle \nabla((W*Iu)Iu-I((W*u)u)),\nabla Iu \rangle dt| 
\lesssim \frac{\delta}{N^{1-}}
 \|\nabla Iu\|^3_{X^{0,\frac{1}{2}+}[0,\delta]} \,.
 \end{equation}
We again have to consider a term like $B$ but with
$$
M(\xi_1,\xi_2) := \frac{|m(\xi_1+\xi_2)-m(\xi_1)m(\xi_2)|}{m(\xi_1)m(\xi_2)} 
 \cdot \frac{|\xi_1 + \xi_2 |}{\langle \xi_1 \rangle^2 |\xi_1||\xi_2|} \,. 
$$
We concentrate on the more difficult case $N_2 \ge N_1$ and have to consider

\noindent\textbf{Case 1:} $ N_2 \sim N_1 \gtrsim N$.
We have
$$ 
M(\xi_1,\xi_2) \lesssim (\frac{N_1}{N})^{\frac{1}{2}-} 
(\frac{N_2}{N})^{\frac{1}{2}-} \frac{1}{N_1^3}
 \lesssim \frac{1}{N_{\max}^{0+} N_1^{3-}} \,.
 $$
Thus
\begin{align*}
B & \lesssim \frac{\delta}{N_{\max}^{0+} N_1^{3-}} 
 \| u_1\|_{L^{\infty}_t L^{\infty -}_x} \| u_2\|_{L^{\infty}_t L^2_x} 
  \|u_3\|_{L^{\infty}_t L^{2+}_x} \\
& \lesssim \frac{\delta (1 \wedge N_{\min}^{0+})}{N_{\max}^{0+} N_1^{3-}} N_1^{3/2}
  \prod_{i=1}^3  \|u_i\|_{L^{\infty}_t L^2_x}  \\
& \lesssim \frac{\delta (1 \wedge N_{\min}^{0+})}{N_{\max}^{0+} N^{\frac{3}{2}-}} 
 \prod_{i=1}^3 \|u_i\|_{X^{0,\frac{1}{2}+}} \,.
\end{align*}

\noindent\textbf{Case 2:} $ N_2 \gg N_1 $ ($\Longrightarrow N_2 \gtrsim N$).
By the mean value theorem we obtain
$$ 
M(\xi_1,\xi_2) \lesssim \frac{N_1}{N_2} \frac{1}{ \langle \xi_1 \rangle^2 N_1} 
= \frac{1}{N_2 \langle \xi_1 \rangle^2} \,, 
$$
so that
\begin{align*}
B & \lesssim \frac{\delta}{N_{\max}^{0+} N^{1-}} \|\langle D 
\rangle^{-2} u_1\|_{L^{\infty}_t L^{\infty}_x} \| u_2\|_{L^{\infty}_t L^2_x} 
 \|u_3\|_{L^{\infty}_t L^2_x} \\
& \lesssim \frac{\delta (1 \wedge N_{\min}^{0+})}{N_{\max}^{0+} N^{1-}} 
\prod_{i=1}^3 \|u_i\|_{L^{\infty}_t L^2_x} \,.
\end{align*}
Thus \eqref{52} follows.

We now have to consider the sixth order term on the right-hand side of \eqref{E}.
 Our aim is to show the following estimate:
\begin{equation} \label{53}
\begin{split}
& \big|\int_0^{\delta}\langle (W*|Iu|^2)Iu-I((W*|u|^2)u), I((W*|u|^2)u) 
\rangle dt\big| \\
&\lesssim (\frac{\delta}{N^{2-}} + \frac{1}{N^{4-}})
 \|\nabla Iu\|^6_{X^{0,\frac{1}{2}+}}\,.
\end{split}
 \end{equation}
 We have to show
\begin{align*}
C&:= \int_0^{\delta} \int_* M(\xi_1,\dots ,\xi_6) 
\prod_{i=1}^6 \widehat{u_i}(\xi_i,t) \,d\xi_1 \dots   \,d\xi_6 dt \\
&\lesssim \big(\frac{\delta}{N^{2-}} + \frac{1}{N^{4-}}\big) 
\prod_{i=1}^6 \|u_i\|_{X^{0,\frac{1}{2}+}[0,\delta]}  
\end{align*}
with
\begin{align*}
&M(\xi_1,\dots .,\xi_6) \\
&:=  \frac{|m(\xi_1+\xi_2+\xi_3)-m(\xi_1)m(\xi_2)m(\xi_3)|}{m(\xi_1)m(\xi_2)m(\xi_3)
 \langle \xi_1 + \xi_2 \rangle^2 }
 \cdot \frac{m(\xi_4 + \xi_5 + \xi_6)}{m(\xi_4)m(\xi_5)m(\xi_6) \langle \xi_4 
 + \xi_5 \rangle^2}\cdot \prod_{i=1}^6  |\xi_i|^{-1} \,.
\end{align*}
We assume without loss of generality $N_1 \ge N_2$ and $N_4 \ge N_5$. 

\noindent\textbf{Case 1:} $ N \gg N_4 \ge N_5$ and $N \gg N_6$.\\
\textbf{a.} $N_1 \ge N_2 \gtrsim N \gtrsim N_3$.
In this case we obtain
\begin{align*}
C & \lesssim (\frac{N_1}{N})^{1/2} (\frac{N_2}{N})^{1/2} 
 \frac{1}{N_1 N_2} \| \langle D \rangle^{-2}( u_1 u_2)\|_{L^1_t L^{\frac{3}{2} -}_x} 
 \|D^{-1} u_3\|_{L^{\infty}_t L^{6+}_x} \\
& \quad \times  \| \langle D \rangle^{-2}(D^{-1} u_4 D^{-1} u_5)\|_{L^{\infty}_t 
 L^{\infty}_x} \|D^{-1}u_6\|_{L^{\infty}_t L^{6+}_x} \\
& \lesssim \frac{\delta}{N_{\max}^{0+} N^{2-}} \| u_1 u_2\|_{L^{\infty}_t L^1_x} 
 \|D^{-1} u_3\|_{L^{\infty}_t L^{6+}_x} \|D^{-1} u_4 D^{-1} 
 u_5\|_{L^{\infty}_t L^{3+}_x} \|D^{-1}u_6\|_{L^{\infty}_t L^{6+}_x} \\
& \lesssim \frac{\delta}{N_{\max}^{0+} N^{2-}} \|u_1\|_{L^{\infty}_t L^2_x} 
 \|u_2\|_{L^{\infty}_t L^2_x} \|D^{-1} u_3\|_{L^{\infty}_t L^{6+}_x} 
 \|D^{-1} u_4\|_{L^{\infty}_t L^6_x}  \|D^{-1} u_5\|_{L^{\infty}_t L^{6+}_x} \\
& \quad \times \|D^{-1} u_6\|_{L^{\infty}_t L^{6+}_x}\\
&  \lesssim \frac{\delta(1 \wedge N_{\min}^{0+})}{N_{\max}^{0+} N^{2-}} 
 \prod_{i=1}^6 \|u_i\|_{X^{0,\frac{1}{2}+}} \,.
\end{align*}
\textbf{b.} $N_1 , N_3 \gtrsim N \gg N_2$. The estimate
$$
M(\xi_1,\dots ,\xi_6) \lesssim (\frac{N_1}{N})^{1/2}(\frac{N_3}{N})^{1/2}
 \frac{1}{N_1^2 \langle \xi_4 + \xi_5 \rangle^2 
|\xi_1||\xi_2| N_3 |\xi_4||\xi_5||\xi_6|}
 $$
implies
\begin{align*}
C & \lesssim \frac{\delta}{N_{\max}^{0+} N^{3-}} \|D^{-1} u_1\|_{L^{\infty}_t L^6_x} 
 \|D^{-1} u_2\|_{L^{\infty}_t L^{6+}_x} \| u_3\|_{L^{\infty}_t L^2_x} \\
&\quad\times \|\langle D \rangle^{-2} (D^{-1} u_4 D^{-1} u_5)\|_{L^{\infty}_t L^{\infty -}_x} \|D^{-1}u_6\|_{L^{\infty}_t L^{6+}_x} \\
& \lesssim \frac{\delta}{N_{\max}^{0+} N^{3-}} \|u_1\|_{L^{\infty}_t L^2_x} \|D^{-1} u_2\|_{L^{\infty}_t L^{6+}_x} \|u_3\|_{L^{\infty}_t L^2_x} \|D^{-1} u_4 D^{-1} u_5\|_{L^{\infty}_t L^{3+}_x}\\
& \quad\times \|D^{-1} u_6\|_{L^{\infty}_t L^{6+}_x}\\
&  \lesssim \frac{\delta(1 \wedge N_{\min}^{0+})}{N_{\max}^{0+} N^{3-}} \prod_{i=1}^6 \|u_i\|_{X^{0,\frac{1}{2}+}} \,.
\end{align*}
\textbf{c.} $N_1 , N_2 , N_3 \gtrsim N$.\\
In this case we obtain
\begin{align*}
C & \lesssim (\frac{N_1}{N})^{1/2} (\frac{N_2}{N})^{1/2} (\frac{N_3}{N})^{1/2} \frac{1}{N_1 N_2 N_3} \| \langle D \rangle^{-2}( u_1 u_2)\|_{L^1_t L^{3 -}_x} \|u_3\|_{L^{\infty}_t L^2_x} \\
& \quad\times \| \langle D \rangle^{-2}(D^{-1} u_4 D^{-1} u_5)\|_{L^{\infty}_t
  L^{\infty}_x} \|D^{-1}u_6\|_{L^{\infty}_t L^{6+}_x} \\
& \lesssim \frac{\delta}{N_{\max}^{0+} N^{3-}} \| u_1 u_2\|_{L^{\infty}_t 
 L^1_x} \|u_3\|_{L^{\infty}_t L^2_x} \|D^{-1} u_4 D^{-1} u_5\|_{L^{\infty}_t L^{3+}_x} 
 \|D^{-1}u_6\|_{L^{\infty}_t L^{6+}_x} \\
&  \lesssim \frac{\delta(1 \wedge N_{\min}^{0+})}{N_{\max}^{0+} N^{3-}} 
 \prod_{i=1}^6 \|u_i\|_{X^{0,\frac{1}{2}+}} \,.
\end{align*}

\noindent\textbf{Case 2:} $ N \gg N_4 \ge N_5$ and $N_6 \gtrsim N$.\\
\textbf{a.} $N_3 \gtrsim N \gtrsim N_1 \ge N_2$. By the mean value theorem we have
\begin{align*}
C & \lesssim \frac{N_1}{N_3} \cdot \frac{\delta^{1-}}{N_1 N_3 N_6} 
 \| \langle D \rangle^{-2}( u_1 D^{-1} u_2)\|_{L^{\infty -}_t L^{\infty}_x}
  \|u_3\|_{L^{\infty}_t L^2_x} \\
& \quad\times  \| \langle D \rangle^{-2}(D^{-1} u_4 D^{-1} u_5)\|_{L^{\infty}_t 
 L^{\infty}_x} \|u_6\|_{L^{\infty}_t L^2_x} \\
& \lesssim \frac{\delta^{1-}}{N_{\max}^{0+} N^{3-}} \| u_1 D^{-1}u_2\|_{L^{\infty -}_t
  L^{\frac{3}{2}+}_x} \|u_3\|_{L^{\infty}_t L^2_x}
  \|D^{-1} u_4 D^{-1} u_5\|_{L^{\infty}_t L^{3+}_x} \|u_6\|_{L^{\infty}_t L^2_x} \\
& \lesssim \frac{\delta^{1-}}{N_{\max}^{0+} N^{3-}} \|u_1\|_{L^{\infty -}_t L^{2+}_x} 
 \|D^{-1}u_2\|_{L^{\infty}_t L^{6+}_x} \|u_3\|_{L^{\infty}_t L^2_x} 
 \|D^{-1} u_4\|_{L^{\infty}_t L^6_x} \\
& \quad\times  \|D^{-1} u_5\|_{L^{\infty}_t L^{6+}_x} \|u_6\|_{L^{\infty}_t L^2_x}\\
&  \lesssim \frac{\delta^{1-}(1 \wedge N_{\min}^{0+})}{N_{\max}^{0+} N^{3-}} 
 \prod_{i=1}^6 \|u_i\|_{X^{0,\frac{1}{2}+}} \,.
\end{align*}
\textbf{b.} $N_1 \gtrsim N \gg N_2 \ge N_3$ (or  $N_1 \gtrsim N \gg N_3 \ge N_2$ 
by exchanging $u_2$ and $u_3$).                               \\
Similarly as in a. we use the mean value theorem and obtain
\begin{align*}
C & \lesssim \frac{N_2}{N_1} \cdot \frac{\delta}{N_1^2 N_2}
  \|D^{-1} u_1\|_{L^{\infty}_t L^6_x} \|u_2\|_{L^{\infty}_t L^2_x} 
 \|D^{-1}u_3\|_{L^{\infty}_t L^{6+}_x} \\
& \quad\times  \| \langle D \rangle^{-2}(D^{-1} u_4 D^{-1} u_5)\|_{L^{\infty}_t
  L^{\infty -}_x} \|D^{-1} u_6\|_{L^{\infty}_t L^6_x} \\
& \lesssim \frac{\delta}{N_1^3} \| u_1\|_{L^{\infty}_t L^2_x} \|u_2\|_{L^{\infty}_t 
 L^2_x} \|D^{-1} u_3\|_{L^{\infty}_t L^{6+}_x} \|D^{-1} u_4 D^{-1} u_5\|_{L^{\infty}_t 
 L^{3+}_x} \|u_6\|_{L^{\infty}_t L^2_x} \\
&  \lesssim \frac{\delta (1 \wedge N_{\min}^{0+})}{N_{\max}^{0+} N^{3-}} 
 \prod_{i=1}^6 \|u_i\|_{X^{0,\frac{1}{2}+}} \,.
\end{align*}
\textbf{c.} $N_1 \ge N_2 \gtrsim N \gg N_3$. 
We obtain
\begin{align*}
C & \lesssim (\frac{N_1}{N})^{\frac{1}{2}-}  (\frac{N_2}{N})^{\frac{1}{2}-} 
 \frac{1}{N_1 N_2 N_6} \|\langle D \rangle^{-2} (u_1 u_2)\|_{L^2_t L^3_x} 
 \|D^{-1}u_3\|_{L^{\infty}_t L^{6+}_x} \\
& \quad\times \| \langle D \rangle^{-2}(D^{-1} u_4 D^{-1} u_5)\|_{L^2_t 
 L^{\infty -}_x} \|u_6\|_{L^{\infty}_t L^2_x} \\
& \lesssim \frac{\delta}{N_{\max}^{0+} N^{3-}} \| u_1 u_2\|_{L^{\infty}_t 
 L^1_x} \|D^{-1} u_3\|_{L^{\infty}_t L^{6+}_x} \|D^{-1} u_4 D^{-1} u_5\|_{L^{\infty}_t
  L^{3+}_x} \|u_6\|_{L^{\infty}_t L^2_x} \\
&  \lesssim \frac{\delta (1 \wedge N_{\min}^{0+})}{N_{\max}^{0+} N^{3-}} 
 \prod_{i=1}^6 \|u_i\|_{X^{0,\frac{1}{2}+}} \,.
\end{align*}
\textbf{d.} $N_1 \ge N_2 \ge N_3 \gtrsim N$ or $N_1 \ge N_3 \ge N_2 \gtrsim N$.
This case can be handled similarly as case c. with an additional factor 
$(\frac{N_3}{N})^{\frac{1}{2}-}$.
\\
\textbf{e.} $N_3 \gtrsim N_1 \gtrsim N \gtrsim N_2$ 
(or $N_1 \ge N_3 \gtrsim N \gtrsim N_2$ by exchanging the roles of $u_1$ and $u_3$). 
We obtain
\begin{align*}
C & \lesssim (\frac{N_3}{N})^{\frac{1}{2}-}  (\frac{N_1}{N})^{\frac{1}{2}-} 
 \frac{\delta}{N_1 N_3 N_6} \|\langle D \rangle^{-2} (u_1 D^{-1}u_2)\|_{L^{\infty}_t
 L^{\infty}_x} \|u_3\|_{L^{\infty}_t L^2_x} \\
& \quad\times \| \langle D \rangle^{-2}(D^{-1} u_4 D^{-1} u_5)\|_{L^{\infty}_t 
 L^{\infty}_x} \|u_6\|_{L^{\infty}_t L^2_x} \\
& \lesssim \frac{\delta}{N_{\max}^{0+} N^{3-}} \| u_1 D^{-1}u_2\|_{L^{\infty}_t 
 L^{\frac{3}{2}+}_x} \|u_3\|_{L^{\infty}_t L^2_x} \|D^{-1} u_4 D^{-1} 
 u_5\|_{L^{\infty}_t L^{3+}_x} \|u_6\|_{L^{\infty}_t L^2_x} \\
& \lesssim \frac{\delta}{N_{\max}^{0+} N^{3-}} \| u_1\|_{L^{\infty}_t L^2_x} 
 \| D^{-1}u_2\|_{L^{\infty}_t L^{6+}_x} \|u_3\|_{L^{\infty}_t L^2_x} 
 \|D^{-1} u_4\|_{L^{\infty}_t L^6_x} \| D^{-1} u_5\|_{L^{\infty}_t L^{6+}_x} \\
& \quad\times  \|u_6\|_{L^{\infty}_t L^2_x} \\
&  \lesssim \frac{\delta (1 \wedge N_{\min}^{0+})}{N_{\max}^{0+} N^{3-}} 
\prod_{i=1}^6 \|u_i\|_{X^{0,\frac{1}{2}+}} \,.
\end{align*}
\textbf{f.} $ N_3 \ge N_1 \ge N_2 \gtrsim N$ (or $N_1 \ge N_3 \ge N_2 \gtrsim N$).
This case can be treated as case a. with an additional factor 
$(\frac{N_2}{N})^{\frac{1}{2}-}$.

\noindent\textbf{Case 3:} $N_4 \ge N_5 \gtrsim N$.\\
\textbf{a.} $N_1,N_2,N_3 \lesssim N$ and $N_6 \le N$.
We obtain
$$ 
M(\xi_1,\dots ,\xi_6) \lesssim \frac{1}{\langle \xi_1 
+ \xi_2 \rangle^2 \langle \xi_4 + \xi_5 \rangle^2
  N_4 N_5 |\xi_1||\xi_2||\xi_3||\xi_6|} 
(\frac{N_4}{N})^{\frac{1}{2}-} (\frac{N_5}{N})^{\frac{1}{2}-} \,, 
$$
so that
\begin{align*}
C & \lesssim \frac{\delta}{N_{\max}^{0+} N^{2-}} \|\langle D 
 \rangle^{-2}(D^{-1} u_1 D^{-1}u_2)\|_{L^{\infty}_t L^{\infty -}_x} 
 \|D^{-1}u_3\|_{L^{\infty}_t L^{6+}_x} \\ 
& \quad\times \|\langle D \rangle^{-2}(u_4 u_5)\|_{L^{\infty}_t L^{3/2}_x}  
 \|D^{-1}u_6\|_{L^{\infty}_t L^{6+}_x} \\
& \lesssim \frac{\delta}{N_{\max}^{0+} N^{2-}} \|D^{-1} u_1 D^{-1}u_2\|_{L^{\infty}_t 
 L^{3+}_x} \|D^{-1}u_3\|_{L^{\infty}_t L^{6+}_x} \| u_4 u_5\|_{L^{\infty}_t L^1_x} 
  \|D^{-1}u_6\|_{L^{\infty}_t L^{6+}_x} \\
&  \lesssim \frac{\delta (1 \wedge N_{\min}^{0+})}{N_{\max}^{0+} N^{2-}} 
 \prod_{i=1}^6 \|u_i\|_{X^{0,\frac{1}{2}+}} \,.
\end{align*}
\textbf{b.}  $N_1,N_2,N_3 \lesssim N$ and $N_6 \ge N$.
We argue similarly as in case a with an additional factor 
$(\frac{N_6}{N})^{\frac{1}{2}-}$ and get
\begin{align*}
C & \lesssim \frac{\delta}{N_{\max}^{0+} N^{3-}} \|\langle D \rangle^{-2}(D^{-1} 
 u_1 D^{-1}u_2)\|_{L^{\infty}_t L^{\infty -}_x} \|D^{-1}u_3\|_{L^{\infty}_t 
 L^{6+}_x} \\
& \quad\times \|\langle D \rangle^{-2}(u_4 u_5)\|_{L^{\infty}_t L^3_x}  
 \|u_6\|_{L^{\infty}_t L^2_x} \\
&  \lesssim \frac{\delta (1 \wedge N_{\min}^{0+})}{N_{\max}^{0+} N^{3-}} 
\prod_{i=1}^6 \|u_i\|_{X^{0,\frac{1}{2}+}} \,.
\end{align*}
\textbf{c.} $N_3 \gtrsim N$ and $N_1,N_2,N_6 \lesssim N$.
Replacing $\|D^{-1} u_3\|_{L^{\infty}_t L^{6+}_x}$ by 
$\|D^{-1} u_3\|_{L^{\infty}_t L^6_x}$ we obtain the same result as in case a.
\\
\textbf{d.} $N_3 \gtrsim N$ , $N_1,N_2 \lesssim N$ and $N_6 \gtrsim N$.
The additional factor $(\frac{N_6}{N})^{1/2}$ leads to the bound
\begin{align*}
C & \lesssim \frac{\delta}{N_{\max}^{0+} N^{3-}}
 \|\langle D \rangle^{-2}(D^{-1} u_1 D^{-1}u_2)\|_{L^{\infty}_t L^{\infty}_x} \|D^{-1}u_3\|_{L^{\infty}_t L^6_x} \\
& \quad\times  \|\langle D \rangle^{-2}(u_4 u_5)\|_{L^{\infty}_t L^3_x}  \|u_6\|_{L^{\infty}_t L^2_x} \\
&  \lesssim \frac{\delta (1 \wedge N_{\min}^{0+})}{N_{\max}^{0+} N^{3-}} \prod_{i=1}^6 \|u_i\|_{X^{0,\frac{1}{2}+}} \,.
\end{align*}
\textbf{e.} $N_1 \ge N_2 \gtrsim N$ and $N_3,N_6 \lesssim N$.
We obtain
$$
 M(\xi_1,\dots ,\xi_6) \lesssim \frac{(\frac{N_1}{N})^{\frac{1}{2}-}
 (\frac{N_2}{N})^{\frac{1}{2}-} (\frac{N_4}{N})^{\frac{1}{2}-}
 (\frac{N_5}{N})^{\frac{1}{2}-}}{\langle \xi_1 + \xi_2 \rangle^2 \langle \xi_4 
+ \xi_5 \rangle^2 N_1 N_2 N_4 N_5 |\xi_3||\xi_6|}  \,, 
$$
so that
\begin{align*}
C & \lesssim \frac{\delta}{N_{\max}^{0+} N^{4-}} \|\langle D \rangle^{-2}(u_1 u_2)\|_{L^{\infty}_t L^{3 -}_x} \|D^{-1}u_3\|_{L^{\infty}_t L^{6+}_x} \\
& \quad\times  \|\langle D \rangle^{-2}(u_4 u_5)\|_{L^{\infty}_t L^3_x}  \|D^{-1}u_6\|_{L^{\infty}_t L^{6+}_x} \\
& \lesssim \frac{\delta}{N_{\max}^{0+} N^{4-}} \|u_1 u_2\|_{L^{\infty}_t L^1_x} \|u_3\|_{L^{\infty}_t L^2_x} \| u_4 u_5\|_{L^{\infty}_t L^1_x}      \|u_6\|_{L^{\infty}_t L^2_x} \\
&  \lesssim \frac{\delta (1 \wedge N_{\min}^{0+})}{N_{\max}^{0+} N^{4-}} \prod_{i=1}^6 \|u_i\|_{X^{0,\frac{1}{2}+}} \,.
\end{align*}
\textbf{f.} $N_1 \ge N_2 \gtrsim N$ , $N_3\lesssim N$ and $N_6 \gtrsim N$.
The additional factor $(\frac{N_6}{N})^{1/2}$ leads to
\begin{align*}
C & \lesssim \frac{\delta}{N_{\max}^{0+} N^{4-}} \|\langle D \rangle^{-2}(u_1 u_2)\|_{L^{\infty}_t L^{3 -}_x} \|D^{-1}u_3\|_{L^{\infty}_t L^{6+}_x} \\
& \quad\times  \|\langle D \rangle^{-2}(u_4 D^{-1}u_5)\|_{L^{\infty}_t L^{\infty -}_x}  \|u_6\|_{L^{\infty}_t L^2_x} \\
& \lesssim \frac{\delta}{N_{\max}^{0+} N^{4-}} \|u_1 u_2\|_{L^{\infty}_t L^1_x} \|u_3\|_{L^{\infty}_t L^2_x} \| u_4\|_{L^{\infty}_t L^2_x} \|D^{-1} u_5\|_{L^{\infty}_t L^6_x} \|u_6\|_{L^{\infty}_t L^2_x} \\
&  \lesssim \frac{\delta (1 \wedge N_{\min}^{0+})}{N_{\max}^{0+} N^{4-}} \prod_{i=1}^6 \|u_i\|_{X^{0,\frac{1}{2}+}} \,.
\end{align*}
\textbf{g.} $N_1 \ge N_2 \gtrsim N$ , $N_6\lesssim N$ and $N_3 \gtrsim N$.
This case can be treated as case f. with $u_3$ and $u_6$ exchanged.
\\
\textbf{h.} $N_1 \ge N_2 \gtrsim N$ and $N_3,N_6 \gtrsim N$ 
($\Longrightarrow N_{\min} \gtrsim N$).
We obtain
$$
 M(\xi_1,\dots ,\xi_6) \lesssim \prod_{i=1}^6 (\frac{N_i}{N})^{\frac{1}{2}-} 
\prod_{i=1}^6 N_i^{-1}  \frac{1}{\langle \xi_1 + \xi_2 \rangle^2 \langle \xi_4 
+ \xi_5 \rangle^2}  \,, 
$$
so that
\begin{align*}
C & \lesssim \frac{1}{N_{\max}^{0+} N^{6-}} \|\langle D \rangle^{-2}(u_1 u_2)\|_{L^{\infty}_t L^{3}_x} \|u_3\|_{L^2_t L^{6}_x} \\
& \quad\times  \|\langle D \rangle^{-2}(u_4 u_5)\|_{L^{\infty}_t L^3_x}  \|u_6\|_{L^2_t L^{6}_x} \\
& \lesssim \frac{1}{N_{\max}^{0+} N^{6-}} \|u_1 u_2\|_{L^{\infty}_t L^1_x} \|u_3\|_{X^{0,\frac{1}{2}+}} \| u_4 u_5\|_{L^{\infty}_t L^1_x}      \|u_6\|_{X^{0,\frac{1}{2}+}} \\
&  \lesssim \frac{1}{N_{\max}^{0+} N^{6-}} \prod_{i=1}^6 \|u_i\|_{X^{0,\frac{1}{2}+}} \,.
\end{align*}
\textbf{i.} $N_1 \gtrsim N \gg N_2$ and $N_3,N_6 \lesssim N$.
We obtain
$$ 
M(\xi_1,\dots ,\xi_6) \lesssim  \frac{(\frac{N_1}{N})^{\frac{1}{2}-}
(\frac{N_2}{N})^{\frac{1}{2}-}(\frac{N_4}{N})^{\frac{1}{2}-}
(\frac{N_5}{N})^{\frac{1}{2}-}}{N_1^2 \langle \xi_4
 + \xi_5 \rangle^2 N_1 N_2 |\xi_3| N_4 N_5 |\xi_6|}  \,, 
$$
so that
\begin{align*}
C & \lesssim \frac{1}{N_{\max}^{0+} N^{6-}} \|u_1\|_{L^2_t L^6_x}
 \|u_2\|_{L^2_t L^{6+}_x} \|D^{-1} u_3\|_{L^{\infty}_t L^{6+}_x} \\
& \quad\times \|\langle D \rangle^{-2}(u_4 u_5)\|_{L^{\infty}_t L^{3-}_x} 
  \|D^{-1} u_6\|_{L^{\infty}_t L^{6+}_x} \\
&  \lesssim \frac{1 \wedge N_{\min}^{0+}}{N_{\max}^{0+} N^{6-}} 
 \prod_{i=1}^6 \|u_i\|_{X^{0,\frac{1}{2}+}} \,.
\end{align*}
\textbf{j.} $N_1 \gtrsim N \gg N_2$ and $N_3,N_6 \gtrsim N$.
We obtain
$$
 M(\xi_1,\dots ,\xi_6) \lesssim  \frac{(\frac{N_1}{N})^{\frac{1}{2}-}
(\frac{N_3}{N})^{\frac{1}{2}-}(\frac{N_4}{N})^{\frac{1}{2}-}
(\frac{N_5}{N})^{\frac{1}{2}-}(\frac{N_6}{N})^{\frac{1}{2}-}}{N_1^2 \langle \xi_4 
+ \xi_5 \rangle^2 |\xi_1| |\xi_2|N_3 N_4 N_5 N_6}  \,, 
$$
thus using $N_1 \lesssim \max(N_3,N_4,N_5,N_6)$:
\begin{align*}
C & \lesssim \frac{1}{N_{\max}^{0+} N^{6-}} \|D^{-1}u_1\|_{L^{\infty}_t L^6_x}
 \|D^{-1}u_2\|_{L^2_t L^{6+}_x} \|u_3\|_{L^2_t L^{6}_x} \\
& \quad\times \|\langle D \rangle^{-2}(u_4 u_5)\|_{L^{\infty}_t L^{3-}_x}  
 \|u_6\|_{L^2_t L^{6+}_x} \\
&  \lesssim \frac{1 \wedge N_{\min}^{0+}}{N_{\max}^{0+} N^{6-}} 
\prod_{i=1}^6 \|u_i\|_{X^{0,\frac{1}{2}+}} \,.
\end{align*}
\textbf{k.} $N_1 \gtrsim N \gg N_2$ , $N_3 \lesssim N$ and $N_6 \lesssim N$.
This case can be handled like j. without the factor $(\frac{N_6}{N})^{\frac{1}{2}-}$ 
by exchanging $u_1$ and $u_6$.
\\
\textbf{l.} $N_1 \gtrsim N \gg N_2$ , $N_3 \lesssim N$ and $N_6 \gtrsim N$.
The mean value theorem gives the bound
\begin{align*}
C & \lesssim \frac{N_2}{N_1} \frac{ (\frac{N_4}{N})^{\frac{1}{2}-}(\frac{N_5}{N})^{\frac{1}{2}-}(\frac{N_6}{N})^{\frac{1}{2}-}}{N_1^2  N_2 N_4 N_5 N_6}         \|D^{-1}u_1\|_{L^{\infty}_t L^6_x}
 \|u_2\|_{L^2_t L^{6+}_x} \|D^{-1} u_3\|_{L^{\infty}_t L^{6+}_x} \\
& \quad\times  \|\langle D \rangle^{-2}(u_4 u_5)\|_{L^{\infty}_t L^{3-}_x} 
  \|u_6\|_{L^2_t L^{6}_x} \\
&  \lesssim \frac{1 \wedge N_{\min}^{0+}}{N_{\max}^{0+} N^{6-}} \prod_{i=1}^6 
 \|u_i\|_{X^{0,\frac{1}{2}+}} \,.
\end{align*}

\noindent\textbf{Case 4:} $N_4 \ge N \gg N_5,N_6$.\\
\textbf{a.} $N_3 \gtrsim N \gtrsim N_1 \ge N_2$.
The mean value theorem gives
\begin{align*}
C & \lesssim \frac{N_1}{N_3} \frac{\delta}{N_1  N_3 N_4}   
  \|\langle D \rangle^{-2} (u_1 D^{-1}u_2)\|_{L^{\infty}_t L^{6}_x} \|u_3\|_{L^{\infty}_t L^{2}_x} \\
& \quad\times  \|\langle D \rangle^{-2}(u_4 D^{-1}u_5)\|_{L^{\infty}_t L^{6-}_x}  \|D^{-1}u_6\|_{L^{\infty}_t L^{6+}_x} \\
& \lesssim \frac{\delta}{N_{\max}^{0+} N^{3-}}    
  \|u_1 D^{-1}u_2\|_{L^{\infty}_t L^{\frac{3}{2}+}_x} \|u_3\|_{L^{\infty}_t L^{2}_x} \\
& \quad\times  \|u_4 D^{-1}u_5\|_{L^{\infty}_t L^{\frac{3}{2}+}_x}  \|D^{-1}u_6\|_{L^{\infty}_t L^{6+}_x} \\
& \lesssim \frac{\delta}{N_{\max}^{0+} N^{3-}}   
 \|u_1\|_{L^{\infty}_t L^2_x} \|D^{-1}u_2\|_{L^{\infty}_t L^{6+}_x} \|u_3\|_{L^{\infty}_t L^{2}_x} \\
& \quad\times  \|u_4\|_{L^{\infty}_t L^2_x} \| D^{-1}u_5\|_{L^{\infty}_t L^{6+}_x}
   \|D^{-1}u_6\|_{L^{\infty}_t L^{6+}_x} \\
&  \lesssim \frac{\delta(1 \wedge N_{\min}^{0+})}{N_{\max}^{0+} N^{3-}}
  \prod_{i=1}^6 \|u_i\|_{X^{0,\frac{1}{2}+}} \,.
\end{align*}
\textbf{b.} $N_1 \ge N \gg N_2 \ge N_3$.
The mean value theorem implies
\begin{align*}
C & \lesssim \frac{N_2}{N_1} \cdot \frac{1}{N_1^2 N_1 N_2}    
  \|u_1\|_{L^2_t L^6_x} \|u_2\|_{L^2_t L^{6}_x} \|D^{-1}u_3\|_{L^{\infty}_t L^{6+}_x} \\
& \quad\times  \|\langle D \rangle^{-2}(D^{-1}u_4 D^{-1}u_5)\|_{L^{\infty}_t L^{3-}_x}
   \|D^{-1}u_6\|_{L^{\infty}_t L^{6+}_x} \\
&  \lesssim \frac{1 \wedge N_{\min}^{0+}}{N_{\max}^{0+} N^{4-}} 
 \prod_{i=1}^6 \|u_i\|_{X^{0,\frac{1}{2}+}} \,.
\end{align*}
\textbf{c.} $N_1 \ge N \gg N_3 \ge N_2$.
This case can be treated like case b. with $u_2$ and $u_3$ exchanged. 
\\
\textbf{d.} $N_3 \ge N_1 \gtrsim N \gg N_2$.
We obtain
\begin{align*}
C & \lesssim (\frac{N_1}{N})^{1/2}(\frac{N_3}{N})^{1/2} \frac{1}{N_1^2 N_1 N_3}         \|u_1\|_{L^2_t L^6_x} \|D^{-1}u_2\|_{L^{\infty}_t L^{6+}_x} \|u_3\|_{L^2_t L^{6}_x} \\
& \quad\times  \|\langle D \rangle^{-2}(D^{-1}u_4 D^{-1}u_5)\|_{L^{\infty}_t L^{3-}_x}  \|D^{-1}u_6\|_{L^{\infty}_t L^{6+}_x} \\
&  \lesssim \frac{1 \wedge N_{\min}^{0+}}{N_{\max}^{0+} N^{4-}} \prod_{i=1}^6 \|u_i\|_{X^{0,\frac{1}{2}+}} \,.
\end{align*}
\textbf{e.} $N_1 \ge N_2 \gtrsim N \gtrsim N_3$.
We obtain in this case
$$
 M(\xi_1,\dots ,\xi_6) \lesssim  \frac{(\frac{N_1}{N})^{\frac{1}{2}-}
(\frac{N_2}{N})^{\frac{1}{2}-}}{\langle \xi_1 + \xi_2 \rangle^2 N_4^2 N_1 N_2
 |\xi_3||\xi_4||\xi_5||\xi_6|}  \,,
$$
so that
\begin{align*}
C & \lesssim \frac{\delta}{N_{\max}^{0+} N^{4-}} \|\langle D 
 \rangle^{-2}(u_1 u_2)\|_{L^{\infty}_t L^{3-}_x} \|D^{-1}u_3\|_{L^{\infty}_t L^{6+}_x} \\
& \quad\times  \|D^{-1}u_4\|_{L^{\infty}_t L^6_x} \| D^{-1}u_5\|_{L^{\infty}_t L^{6+}_x}  \|D^{-1}u_6\|_{L^{\infty}_t L^{6+}_x} \\
& \lesssim  \frac{\delta(1 \wedge N_{\min}^{0+})}{N_{\max}^{0+} N^{4-}}   
  \|u_1 u_2\|_{L^{\infty}_t L^{1}_x} \prod_{i=3}^6 \|u_i\|_{L^{\infty}_t L^{2}_x}  \\
&  \lesssim \frac{\delta(1 \wedge N_{\min}^{0+})}{N_{\max}^{0+} N^{4-}}
  \prod_{i=1}^6 \|u_i\|_{X^{0,\frac{1}{2}+}} \,.
\end{align*}
\textbf{f.} $N_1 \ge N_3 \gtrsim N \gtrsim N_2$.
We obtain
\begin{align*}
C & \lesssim (\frac{N_1}{N})^{\frac{1}{2}-} (\frac{N_3}{N})^{\frac{1}{2}-}
      \frac{\delta}{N_4^2 N_1 N_3} \|\langle D \rangle^{-2}(u_1 D^{-1}u_2)\|_{L^{\infty}_t L^{\infty -}_x} \|u_3\|_{L^{\infty}_t L^{2}_x} \\
& \quad\times  \|D^{-1}u_4\|_{L^{\infty}_t L^6_x} \| D^{-1}u_5\|_{L^{\infty}_t L^{6+}_x}  \|D^{-1}u_6\|_{L^{\infty}_t L^{6+}_x} \\
 & \lesssim \frac{\delta}{N_{\max}^{0+} N^{4-}} \|u_1 D^{-1}u_2\|_{L^{\infty}_t
  L^{\frac{3}{2}+}_x} \|u_3\|_{L^{\infty}_t L^{2}_x} \\
& \quad\times  \|D^{-1}u_4\|_{L^{\infty}_t L^6_x} \| D^{-1}u_5\|_{L^{\infty}_t 
 L^{6+}_x}  \|D^{-1}u_6\|_{L^{\infty}_t L^{6+}_x} \\
& \lesssim  \frac{\delta}{N_{\max}^{0+} N^{4-}}    \|u_1\|_{L^{\infty}_t L^2_x} 
 \|D^{-1}u_2\|_{L^{\infty}_t L^{6+}_x}  \|u_3\|_{L^{\infty}_t L^{2}_x} \\
& \quad\times  \|D^{-1}u_4\|_{L^{\infty}_t L^6_x} \| D^{-1}u_5\|_{L^{\infty}_t 
 L^{6+}_x}  \|D^{-1}u_6\|_{L^{\infty}_t L^{6+}_x} \\
&  \lesssim \frac{\delta(1 \wedge N_{\min}^{0+})}{N_{\max}^{0+} N^{4-}} 
 \prod_{i=1}^6 \|u_i\|_{X^{0,\frac{1}{2}+}} \,.
\end{align*}
\textbf{g.} $N_1 \ge N_2 \gtrsim N$ and $N_3 \gtrsim N$.
This case can be treated as case f. with an additional factor 
$(\frac{N_2}{N})^{\frac{1}{2}-}$.

\noindent\textbf{Case 5:} $N_4,N_6 \ge N \gg N_5$.\\
\textbf{a.} $N_3 \gtrsim N \gtrsim N_1 \ge N_2$.
This case can be treated as case 2a, because the additional factor 
$(\frac{N_4}{N})^{\frac{1}{2}-}(\frac{N_6}{N})^{\frac{1}{2}-}$ is harmless,
 when one uses $N_4 \lesssim \max(N_3,N_6)$.
\\
\textbf{b.} $N_1 \gtrsim N \gg N_2 \ge N_3$ (or $N_1 \gtrsim N \gg N_3 \ge N_2$
 by exchanging $u_2$ and $u_3$).
We have by the mean value theorem
\begin{align*}
C & \lesssim \frac{N_2}{N_1} (\frac{N_4}{N})^{\frac{1}{2}-} 
 (\frac{N_6}{N})^{\frac{1}{2}-} \frac{1}{N_1^2 N_1 N_2 N_4 N_6} 
 \|u_1\|_{L^{\infty}_t L^{2+}_x} \|u_2\|_{L^2_t L^6_x} \|D^{-1}u_3\|_{L^{\infty}_t
  L^{6+}_x} \\
& \quad\times  \|\langle D \rangle^{-2} (u_4 D^{-1}u_5)\|_{L^{\infty}_t L^{\infty -}_x}  \|u_6\|_{L^2_t L^{6}_x} \\
 & \lesssim \frac{1}{N_{\max}^{0+} N^{6-}} \|u_1\|_{L^{\infty}_t L^{2}_x} 
 \|u_2\|_{X^{0,\frac{1}{2}+}} \|D^{-1}u_3\|_{L^{\infty}_t L^{6+}_x} \\
& \quad\times  \|u_4 D^{-1}u_5\|_{L^{\infty}_t L^{\frac{3}{2}+}_x} 
  \|u_6\|_{X^{0,\frac{1}{2}+}}   \\
& \lesssim  \frac{1}{N_{\max}^{0+} N^{6-}} \|u_1\|_{L^{\infty}_t L^{2}_x} 
 \|u_2\|_{X^{0,\frac{1}{2}+}}  \|D^{-1}u_3\|_{L^{\infty}_t L^{6+}_x} 
 \|u_4\|_{L^{\infty}_t L^2_x} \| D^{-1}u_5\|_{L^{\infty}_t L^{6+}_x}\\ 
&\quad \times \|u_6\|_{X^{0,\frac{1}{2}+}}   \\
&  \lesssim \frac{1 \wedge N_{\min}^{0+}}{N_{\max}^{0+} N^{6-}}
  \prod_{i=1}^6 \|u_i\|_{X^{0,\frac{1}{2}+}} \,.
\end{align*}
\textbf{c.} $N_1 \ge N_2 \gtrsim N \gg N_3$.
This case is treated like case 2c, because the additional factor
 $(\frac{N_4}{N})^{\frac{1}{2}-} (\frac{N_6}{N})^{\frac{1}{2}-}$
 can be handled using $N_4 \lesssim \max(N_1,N_6)$.
\\
\textbf{d.} $N_3 \ge N_1 \gtrsim N \gg N_2$ or  $N_1 \ge N_3 \gtrsim N \ge N_2$.
This case can be handled like case 2e, using $N_4 \lesssim \max(N_1,N_3,N_6)$.
\\
\textbf{e.} $N_1 \ge N_2 \gtrsim N$ and $N_3 \gtrsim N$.
This case is also treated like case 2e, because the additional factor
 $(\frac{N_2}{N})^{\frac{1}{2}-}(\frac{N_4}{N})^{\frac{1}{2}-}$  
 $ (\frac{N_6}{N})^{\frac{1}{2}-}$ is acceptable, using $N_2 \le N_1$ and 
$N_4 \lesssim \max(N_1,N_3,N_6)$.

\noindent\textbf{Case 6:} $N_4,N_5,N_6 \gtrsim N$.\\
\textbf{a.} $N_3 \gtrsim N \gg N_1,N_2$.
The mean value theorem allows to estimate
$$ 
M(\xi_1,\dots ,\xi_6) \lesssim \frac{N_1}{N_3} \cdot 
\frac{(\frac{N_4}{N})^{\frac{1}{2}-}(\frac{N_5}{N})^{\frac{1}{2}-} 
(\frac{N_6}{N})^{\frac{1}{2}-}}{\langle \xi_1 + \xi_2 \rangle^2 \langle \xi_4
 + \xi_5 \rangle^2 N_1 |\xi_2| |\xi_3| N_4 N_5 N_6}  
\lesssim \frac{1}{N_{\max}^{0+} N^{4-}} \,,
 $$
thus
\begin{align*}
C & \lesssim \frac{\delta}{N_{\max}^{0+} N^{4-}} \|\langle D \rangle^{-2}(u_1 D^{-1}u_2)\|_{L^{\infty}_t L^{\infty}_x} \|D^{-1}u_3\|_{L^{\infty}_t L^{6}_x} \\
& \quad\times  \|\langle D \rangle^{-2}(u_4 u_5)\|_{L^{\infty}_t L^{3}_x}  \|u_6\|_{L^{\infty}_t L^{2}_x} \\
 & \lesssim \frac{\delta}{N_{\max}^{0+} N^{4-}} \|u_1 D^{-1}u_2\|_{L^{\infty}_t L^{\frac{3}{2}+}_x} \|u_3\|_{L^{\infty}_t L^{2}_x}
 \|u_4 u_5\|_{L^{\infty}_t L^1_x}  \|u_6\|_{L^{\infty}_t L^{2}_x}  \\
& \lesssim  \frac{\delta}{N_{\max}^{0+} N^{4-}} \|u_1\|_{L^{\infty}_t L^{2}_x} \|D^{-1}u_2\|_{L^{\infty}_t L^{6+}_x} \|u_3\|_{L^{\infty}_t L^{2}_x} \|u_4\|_{L^{\infty}_t L^{2}_x}  \|u_5\|_{L^{\infty}_t L^{2}_x} \|u_6\|_{L^{\infty}_t L^{2}_x}  \\
&  \lesssim \frac{\delta(1 \wedge N_{\min}^{0+})}{N_{\max}^{0+} N^{4-}} \prod_{i=1}^6 \|u_i\|_{X^{0,\frac{1}{2}+}} \,.
\end{align*}
\textbf{b.} $N_1, N_2, N_3 \gtrsim N$ and without loss of generality $N_1 \ge N_2$
 and $N_4 \ge N_5$.
We have
\begin{align*}
C & \lesssim \prod_{i=1}^6 (\frac{N_i}{N})^{\frac{1}{2}-} 
\frac{\delta^{1-}}{N_1 N_3 N_4 N_6} 
\|\langle D \rangle^{-2}(u_1 D^{-1}u_2)\|_{L^{\infty -}_t L^{\infty}_x} 
\|u_3\|_{L^{\infty}_t L^{2}_x} \\
& \quad\times  \|\langle D \rangle^{-2}(u_4 D^{-1}u_5)\|_{L^{\infty -}_t
 L^{\infty}_x}  \|u_6\|_{L^{\infty}_t L^{2}_x} \\
 & \lesssim \frac{\delta^{1-}}{N_{\max}^{0+} N^{4-}} \|u_1 D^{-1}u_2\|_{L^{\infty -}_t
 L^{\frac{3}{2}+}_x} \|u_3\|_{L^{\infty}_t L^{2}_x}
 \|u_4 D^{-1}u_5\|_{L^{\infty -}_t L^{\frac{3}{2}+}_x}  \|u_6\|_{L^{\infty}_t L^{2}_x}  \\
& \lesssim  \frac{\delta^{1-}}{N_{\max}^{0+} N^{4-}} \|u_1\|_{L^{\infty -}_t L^{2+}_x} 
\|D^{-1}u_2\|_{L^{\infty}_t L^{6+}_x} \|u_3\|_{L^{\infty}_t L^{2}_x} 
 \|u_4\|_{L^{\infty -}_t L^{2+}_x}  \|D^{-1}u_5\|_{L^{\infty}_t L^{6}_x} \\
& \quad\times \|u_6\|_{L^{\infty}_t L^{2}_x}  \\
&  \lesssim \frac{\delta^{1-}(1 \wedge N_{\min}^{0+})}{N_{\max}^{0+} N^{4-}} 
\prod_{i=1}^6 \|u_i\|_{X^{0,\frac{1}{2}+}} \,.
\end{align*}
\textbf{c.} $N_1,N_3 \gtrsim N \gtrsim N_2$.
This case can be treated as case b. without the factor 
$(\frac{N_2}{N})^{\frac{1}{2}-}$. 
\\
\textbf{d.} $N_1 \ge N_2 \gtrsim N \gtrsim N_3$ and without loss of generality 
$N_4 \ge N_5$.
We obtain
\begin{align*}
C & \lesssim \prod_{i \neq 3} (\frac{N_i}{N})^{\frac{1}{2}-} 
 \frac{\delta}{N_1 N_2 N_4 N_6} \|\langle D \rangle^{-2}(u_1 u_2)\|_{L^{\infty}_t 
 L^{3}_x} \|D^{-1}u_3\|_{L^{\infty}_t L^{6+}_x} \\
& \quad\times  \|\langle D \rangle^{-2}(u_4 D^{-1}u_5)\|_{L^{\infty}_t L^6_x}  
 \|u_6\|_{L^{\infty}_t L^{2}_x} \\
 & \lesssim \frac{\delta(1 \wedge N_{\min}^{0+})}{N_{\max}^{0+} N^{4-}}
 \|u_1 u_2\|_{L^{\infty}_t L^1_x} \|u_3\|_{L^{\infty}_t L^{2}_x}
 \|u_4 D^{-1}u_5\|_{L^{\infty}_t L^{3/2}_x}  \|u_6\|_{L^{\infty}_t L^{2}_x}  \\
& \lesssim  \frac{\delta(1 \wedge N_{\min}^{0+})}{N_{\max}^{0+} N^{4-}} 
 \|u_1\|_{L^{\infty}_t L^{2}_x} \|u_2\|_{L^{\infty}_t L^{2}_x} 
 \|u_3\|_{L^{\infty}_t L^{2}_x} \|u_4\|_{L^{\infty}_t L^{2}_x}  
 \|D^{-1}u_5\|_{L^{\infty}_t L^{6}_x} \\
& \quad \times \|u_6\|_{L^{\infty}_t L^{2}_x}  \\
&  \lesssim \frac{\delta(1 \wedge N_{\min}^{0+})}{N_{\max}^{0+} N^{4-}} 
 \prod_{i=1}^6 \|u_i\|_{X^{0,\frac{1}{2}+}} \,.
\end{align*}
This completes the proof of \eqref{53}.

We now start to consider the fifth order terms and claim
\begin{equation} \label{54'}
 \big|\int_0^{\delta} \langle (W*|Iu|^2)Iu-I((W*|u|^2)u),
 I(W*|u|^2) \rangle dt\big| \lesssim \frac{\delta}{N^{2-}}
 \|\nabla Iu\|^5_{X^{0,\frac{1}{2}+}}  .
 \end{equation}
 We have to show
$$
 D:= \int_0^{\delta} \int_* M(\xi_1,\dots ,\xi_5) \prod_{i=1}^5 \widehat{u_i}(\xi_i,t)
 \,d\xi_1 \dots   \,d\xi_5 dt \lesssim \frac{\delta}{N^{2-}} 
\prod_{i=1}^5 \|u_i\|_{X^{0,\frac{1}{2}+}[0,\delta]} 
 $$
with
\begin{align*}
M(\xi_1,\dots .,\xi_5) 
&:=  \frac{|m(\xi_1+\xi_2+\xi_3)-m(\xi_1)m(\xi_2)m(\xi_3)|}{m(\xi_1)m(\xi_2)m(\xi_3) 
\langle \xi_1 + \xi_2 \rangle^2 }\\ 
&\quad\times \frac{m(\xi_4 + \xi_5)}{m(\xi_4)m(\xi_5) \langle \xi_4 + \xi_5 \rangle^2}
\cdot \prod_{i=1}^5  |\xi_i|^{-1} \,.
\end{align*}
We assume without loss of generality $N_1 \ge N_2$ and $N_4 \ge N_5$.

\noindent\textbf{Case 1:} $ N \gg N_4 \ge N_5$.\\
\textbf{a.} $N_1 \ge N_2 \gtrsim N \gtrsim N_3$.
In this case we obtain
\begin{equation}
\begin{aligned} \label{54}
D & \lesssim (\frac{N_1}{N})^{1/2} (\frac{N_2}{N})^{1/2}
 \frac{1}{N_1 N_2} \| \langle D \rangle^{-2}( u_1 u_2)\|_{L^1_t L^{\frac{6}{5}}_x}
 \|D^{-1} u_3\|_{L^{\infty}_t L^{6+}_x} \\
& \quad \times \| \langle D \rangle^{-2}(D^{-1} u_4 D^{-1} u_5)\|_{L^{\infty}_t
 L^{\infty-}_x}  \\
& \lesssim \frac{\delta}{N_{\max}^{0+} N^{2-}} \| u_1 u_2\|_{L^{\infty}_t L^1_x}
\|D^{-1} u_3\|_{L^{\infty}_t L^{6+}_x} \|D^{-1} u_4 D^{-1} u_5\|_{L^{\infty}_t
 L^{3+}_x}  \\
&  \lesssim \frac{\delta(1 \wedge N_{\min}^{0+})}{N_{\max}^{0+} N^{2-}}
 \prod_{i=1}^5 \|u_i\|_{X^{0,\frac{1}{2}+}} \,.
\end{aligned}
\end{equation}
\textbf{b.} $N_1 , N_3 \gtrsim N \gg N_2$.
We have
\begin{align*}
D & \lesssim (\frac{N_1}{N})^{1/2} (\frac{N_3}{N})^{1/2}
   \frac{\delta}{N_1 N_3} \| \langle D \rangle^{-2}( u_1 D^{-1}u_2)\|_{L^{\infty}_t
 L^2_x} \|u_3\|_{L^{\infty}_t L^2_x} \\
& \quad\times  \| \langle D \rangle^{-2}(D^{-1} u_4 D^{-1} u_5)\|_{L^{\infty}_t
 L^{\infty}_x}  \\
& \lesssim \frac{\delta}{N_{\max}^{0+} N^{2-}} \| u_1 D^{-1}u_2\|_{L^{\infty}_t
 L^{\frac{3}{2}+}_x} \|u_3\|_{L^{\infty}_t L^2_x} \|D^{-1} u_4 D^{-1} u_5\|_{L^{\infty}_t L^{3+}_x}  \\
&  \lesssim \frac{\delta(1 \wedge N_{\min}^{0+})}{N_{\max}^{0+} N^{2-}}
 \prod_{i=1}^5 \|u_i\|_{X^{0,\frac{1}{2}+}} \,.
\end{align*}
\textbf{c.} $N_1 ,N_2, N_3 \gtrsim N$.
This leads to
\begin{align*}
D & \lesssim (\frac{N_1}{N})^{1/2}(\frac{N_2}{N})^{1/2} (\frac{N_3}{N})^{1/2}
  \frac{\delta}{N_1 N_2  N_3} \| \langle D \rangle^{-2}( u_1 u_2)\|_{L^{\infty}_t
  L^2_x} \|u_3\|_{L^{\infty}_t L^2_x} \\
& \quad\times  \| \langle D \rangle^{-2}(D^{-1} u_4 D^{-1} u_5)\|_{L^{\infty}_t
 L^{\infty}_x}  \\
&  \lesssim \frac{\delta(1 \wedge N_{\min}^{0+})}{N_{\max}^{0+} N^{3-}}
 \prod_{i=1}^5 \|u_i\|_{X^{0,\frac{1}{2}+}} \,.
\end{align*}

\noindent\textbf{Case 2:} $N_4 \ge N_5 \gtrsim N$.
The additional factor $(\frac{N_4}{N})^{\frac{1}{2}-} 
(\frac{N_5}{N})^{\frac{1}{2}-}$ can be compensated by replacing the last factor
 in \eqref{54} by
$$
 N_4^{-1/2} N_5^{-1/2} \| D^{-1/2}u_4
D^{-1/2}u_5\|_{L^{\infty}_t L^{3/2}_x} \lesssim  N_4^{-1/2}
N_5^{-1/2} \|u_4\|_{L^{\infty}_t L^2_x} \|u_5\|_{L^{\infty}_t L^2_x} 
$$
leading to even an improved bound.

\noindent \textbf{Case 3:} $N_4 \ge N \ge N_5$.
We argue as before replacing the last factor in \eqref{54} by
$$ 
N_4^{-1/2} \| D^{-1/2}u_4 D^{-1}u_5\|_{L^{\infty}_t L^{2+}_x}
 \lesssim  N_4^{-1/2}  \|u_4\|_{L^{\infty}_t L^2_x} \|D^{-1}u_5\|_{L^{\infty}_t
 L^{6+}_x} \,.
$$
This proves \eqref{54'}.

Next we claim
\begin{equation}\label{54''}
 \big|\int_0^{\delta} \langle (W*|Iu|^2)Iu-I((W*|u|^2)u), I((W* Re\,u)u) 
\rangle dt\big| 
\lesssim \frac{\delta}{N^{2-}} \|\nabla Iu\|^5_{X^{0,\frac{1}{2}+}}  .
 \end{equation}
 We again consider $D$ with
\begin{align*}
&M(\xi_1,\dots .,\xi_5) \\
&:=  \frac{|m(\xi_1+\xi_2+\xi_3)-m(\xi_1)m(\xi_2)m(\xi_3)|}{m(\xi_1)m(\xi_2)m(\xi_3) 
\langle \xi_1 + \xi_2 \rangle^2 }
\cdot \frac{m(\xi_4 + \xi_5)}{m(\xi_4)m(\xi_5)  \langle \xi_4 \rangle^2}
\cdot \prod_{i=1}^5  |\xi_i|^{-1} \,.
\end{align*}
We argue similarly as in the previous case and consider only the more difficult 
case $N_5 \ge N_4$. 

\noindent\textbf{Case 1:} $N \gg N_5 \ge N_4$. 
The last term in \eqref{54} with a suitable change of the H\"older exponent 
in the first factor can be replaced by
\begin{align*}
\|\langle D \rangle^{-2} D^{-1} u_4 D^{-1} u_5\|_{L^{\infty}_t L^{6}_x} 
& \lesssim  \|\langle D \rangle^{-2} D^{-1}u_4\|_{L^{\infty}_t L^{\infty}_x} 
\|D^{-1} u_5\|_{L^{\infty}_t L^6_x} \\
&\lesssim  \| D^{-1}u_4\|_{L^{\infty}_t L^{6+}_x} \| u_5\|_{L^{\infty}_t L^2_x}
\end{align*}
leading to the same bound.

\noindent\textbf{Case 2:} $N_5 \ge N_4 \gtrsim N$.
The additional factor $(\frac{N_4}{N})^{\frac{1}{2}-}(\frac{N_5}{N})^{\frac{1}{2}-}$ 
is compensated by replacing the last factor in \eqref{54} by
\begin{align*}
&N_4^{-1/2} N_5^{-1/2}\|\langle D \rangle^{-2} D^{-1/2} u_4
 D^{-1/2} u_5\|_{L^{\infty}_t L^{3}_x} \\
& \lesssim N_4^{-1/2} N_5^{-1/2} 
\|D^{-1/2}u_4\|_{L^{\infty}_t L^3_x} \|D^{-1/2} u_5\|_{L^{\infty}_t L^3_x} \\
&\lesssim  N_4^{-1/2} N_5^{-1/2} \|u_4\|_{L^{\infty}_t L^2_x} 
\| u_5\|_{L^{\infty}_t L^2_x} \,.
\end{align*}

\noindent\textbf{Case 3:} $N_5 \ge N \ge N_4$.
Replace the last factor in \eqref{54} by
\begin{align*}
\|\langle D \rangle^{-2} D^{-1} u_4 D^{-1} u_5\|_{L^{\infty}_t L^{3+}_x} 
& \lesssim \|D^{-1}u_4\|_{L^{\infty}_t L^{6+}_x} \|D^{-1} u_5\|_{L^{\infty}_t L^6_x} \\
&\lesssim  N_4^{0+} \|u_4\|_{L^{\infty}_t L^2_x} \| u_5\|_{L^{\infty}_t L^2_x} \,.
\end{align*}
Thus \eqref{54''} is proven.

The next claim is
\begin{equation}
\label{54'''}
 \big|\int_0^{\delta} \langle (W*|Iu|^2)-I(W*|u|^2), I((W*|u|^2)u) \rangle dt\big| 
\lesssim \frac{\delta}{N^{2-}} \|\nabla Iu\|^5_{X^{0,\frac{1}{2}+}}  .
 \end{equation}
We again consider $D$ with
\begin{align*}
&M(\xi_1,\dots .,\xi_5)\\
& :=  \frac{|m(\xi_1+\xi_2)-m(\xi_1)m(\xi_2)|}{m(\xi_1)m(\xi_2) 
\langle \xi_1 + \xi_2 \rangle^2 }
\cdot \frac{m(\xi_3 +\xi_4 + \xi_5)}{m(\xi_3)m(\xi_4)m(\xi_5) \langle \xi_3 
+ \xi_4 \rangle^2} \cdot \prod_{i=1}^5  |\xi_i|^{-1} \,.
\end{align*}
We assume without loss of generality $N_2 \ge N_1$ and $N_3 \ge N_4$.

\noindent\textbf{Case 1:} $ N \gg N_3,N_4,N_5$ 
($\Longrightarrow N_1 \sim N_2 \gtrsim N$).
In this case we obtain
\begin{align*}
D & \lesssim (\frac{N_1}{N})^{1/2} (\frac{N_2}{N})^{1/2} 
  \frac{\delta}{N_1 N_2} \| \langle D \rangle^{-2}( u_1 u_2)\|_{L^{\infty}_t 
 L^{\frac{6}{5}}_x} \\
& \quad\times \| \langle D \rangle^{-2}(D^{-1} u_3 D^{-1} u_4)\|_{L^{\infty}_t
 L^{\infty-}_x}  
 \|D^{-1} u_5\|_{L^{\infty}_t L^{6+}_x} \\
& \lesssim \frac{\delta}{N_{\max}^{0+} N^{2-}} \| u_1 u_2\|_{L^{\infty}_t L^1_x}
  \|D^{-1} u_3 D^{-1} u_4\|_{L^{\infty}_t L^{3+}_x} \|D^{-1} u_5\|_{L^{\infty}_t
  L^{6+}_x} \\
&  \lesssim \frac{\delta(1 \wedge N_{\min}^{0+})}{N_{\max}^{0+} N^{2-}} 
\prod_{i=1}^5 \|u_i\|_{X^{0,\frac{1}{2}+}} \,.
\end{align*}

\noindent\textbf{Case 2:} $ N_3 \ge N \gg N_4,N_5$.\\
\textbf{a.} $N_3 \ge N \gg N_1$. 
We obtain
\begin{align*}
D & \lesssim \frac{\delta}{N_2 N_3} \| \langle D 
\rangle^{-2}(D^{-1}u_1 u_2)\|_{L^{\infty}_t L^2_x} \| 
\langle D \rangle^{-2}(u_3 D^{-1} u_4)\|_{L^{\infty}_t L^{3-}_x}  
 \|D^{-1} u_5\|_{L^{\infty}_t L^{6+}_x} \\
& \lesssim \frac{\delta}{ N_2 N_3} \|D^{-1} u_1 u_2\|_{L^{\infty}_t L^{\frac{3}{2}+}_x}  \|u_3 D^{-1} u_4\|_{L^{\infty}_t L^{\frac{3}{2}+}_x} \|D^{-1} u_5\|_{L^{\infty}_t L^{6+}_x} \\
&  \lesssim \frac{\delta(1 \wedge N_{\min}^{0+})}{N_{\max}^{0+} N^{2-}} \prod_{i=1}^5 \|u_i\|_{X^{0,\frac{1}{2}+}} \,.
\end{align*}
\textbf{b.} $N_2 \ge N_1 \gtrsim N$.
We obtain an additional factor 
$(\frac{N_1}{N})^{\frac{1}{2}-} (\frac{N_2}{N})^{\frac{1}{2}-}$, which is acceptable, 
when one estimates as in a.

\noindent \textbf{Case 3:} $ N_5 \ge N \gg N_3,N_4$ ($\Longrightarrow N_2 \gtrsim N$).
\\
\textbf{a.} $N_1 \le N$. We obtain
\begin{align*}
D & \lesssim \frac{\delta}{N_2 N_5} \| \langle D 
\rangle^{-2}(D^{-1}u_1 u_2)\|_{L^{\infty}_t L^2_x} \| 
\langle D \rangle^{-2}(D^{-1}u_3 D^{-1} u_4)\|_{L^{\infty}_t L^{\infty}_x} 
\|u_5\|_{L^{\infty}_t L^{2}_x} \\
& \lesssim \frac{\delta}{N_2 N_5} \|D^{-1} u_1 u_2\|_{L^{\infty}_t L^{\frac{3}{2}+}_x}
  \|D^{-1}u_3 D^{-1} u_4\|_{L^{\infty}_t L^{3+}_x} \| u_5\|_{L^{\infty}_t L^{2}_x} \\
&  \lesssim \frac{\delta(1 \wedge N_{\min}^{0+})}{N_{\max}^{0+} N^{2-}} 
\prod_{i=1}^5 \|u_i\|_{X^{0,\frac{1}{2}+}} \,.
\end{align*}
\textbf{b.} $N_1 \ge N$.
We obtain an additional factor 
$(\frac{N_1}{N})^{\frac{1}{2}-} (\frac{N_2}{N})^{\frac{1}{2}-}$,
 which can be compensated as in a.

\noindent\textbf{Case 4:} $N_3,N_4 \gtrsim N \gtrsim N_5$.\\
\textbf{a.} $N_2 \ge N \gg N_1$.
We obtain
\begin{align*}
D & \lesssim (\frac{N_3}{N})^{1/2} (\frac{N_4}{N})^{1/2} 
 \frac{\delta}{N_2 N_3 N_4} \| \langle D \rangle^{-2}(D^{-1}u_1 u_2)\|_{L^{\infty}_t 
 L^2_x} \| \langle D \rangle^{-2}(u_3 u_4)\|_{L^{\infty}_t L^{3-}_x}  \\
& \quad\times  \|D^{-1}u_5\|_{L^{\infty}_t L^{6+}_x} \\
& \lesssim  (\frac{N_3}{N})^{1/2} (\frac{N_4}{N})^{1/2} \frac{\delta}{N_2 N_3 N_4} 
 \|D^{-1} u_1 u_2\|_{L^{\infty}_t L^{\frac{3}{2}+}_x}  \|u_3 u_4\|_{L^{\infty}_t 
 L^1_x} \|D^{-1} u_5\|_{L^{\infty}_t L^{6+}_x} \\
&  \lesssim \frac{\delta(1 \wedge N_{\min}^{0+})}{N_{\max}^{0+} N^{3-}}
  \prod_{i=1}^5 \|u_i\|_{X^{0,\frac{1}{2}+}} \,.
\end{align*}
\textbf{b.} $N_1 \ge N$.
We obtain an additional factor 
$(\frac{N_1}{N})^{\frac{1}{2}-} (\frac{N_2}{N})^{\frac{1}{2}-}$,
 which can be compensated by replacing the term
$ \|D^{-1} u_1 u_2\|_{L^{\infty}_t L^{\frac{3}{2}+}_x} $ in a. by
$$ 
\|D^{-1} u_1 u_2\|_{L^{\infty}_t L^1_x} \lesssim \frac{1}{N_1} \|u_1\|_{L^{\infty}_t 
L^2_x}\|u_2\|_{L^{\infty}_t L^2_x} \,. 
$$

\noindent\textbf{Case 5:} $N_3,N_5 \ge N \ge N_4$.\\
\textbf{a.} $N_2 \ge N \ge N_1$.
We have
\begin{align*}
D & \lesssim (\frac{N_3}{N})^{1/2} (\frac{N_5}{N})^{1/2} \frac{\delta}{ N_3 N_5} 
\| \langle D \rangle^{-2}(D^{-1}u_1 D^{-1}u_2)\|_{L^{\infty}_t L^{3+}_x}\\
& \quad\times \| 
\langle D \rangle^{-2}(u_3 D^{-1} u_4)\|_{L^{\infty}_t L^{6-}_x}  
\|u_5\|_{L^{\infty}_t L^2_x} \\
& \lesssim \frac{\delta}{N_{\max}^{0+} N^{2-}}  \|D^{-1} u_1 D^{-1}u_2\|_{L^{\infty}_t 
 L^{3+}_x}  \|u_3 D^{-1}u_4\|_{L^{\infty}_t L^{\frac{3}{2}+}_x} \| u_5\|_{L^{\infty}_t
 L^2_x} \\
&  \lesssim \frac{\delta(1 \wedge N_{\min}^{0+})}{N_{\max}^{0+} N^{2-}} \prod_{i=1}^5 
\|u_i\|_{X^{0,\frac{1}{2}+}} \,.
\end{align*}
\textbf{b.} $N_2 \ge N_1 \ge N$.
We obtain an additional factor $(\frac{N_1}{N})^{\frac{1}{2}-} 
(\frac{N_2}{N})^{\frac{1}{2}-}$, which can be compensated by replacing the term
$ \|D^{-1} u_1 D^{-1}u_2\|_{L^{\infty}_t L^{3+}_x} $ in a. by
$$ 
\|D^{-1} u_1 D^{-1}u_2\|_{L^{\infty}_t L^{1+}_x} 
\lesssim \frac{1}{N_1^{1-} N_2^{1-}} \|u_1\|_{L^{\infty}_t L^2_x}\|u_2\|_{L^{\infty}_t 
L^2_x} \,. 
$$

\noindent\textbf{Case 6:} $N_3,N_4,N_5 \ge N$.\\
\textbf{a.} $N_2 \ge N \ge N_1$.
We have
\begin{align*}
D & \lesssim (\frac{N_3}{N})^{1/2} (\frac{N_4}{N})^{1/2}(\frac{N_5}{N})^{1/2} \frac{\delta}{ N_3 N_4 N_5} \| \langle D \rangle^{-2}(D^{-1}u_1 D^{-1}u_2)\|_{L^{\infty}_t L^{6}_x} \\
& \quad\times  \| \langle D \rangle^{-2}(u_3 u_4)\|_{L^{\infty}_t L^{3}_x}
 \|u_5\|_{L^{\infty}_t L^2_x} \\
& \lesssim \frac{\delta}{N_{\max}^{0+} N^{3-}}  \|D^{-1} u_1 D^{-1}u_2\|_{L^{\infty}_t L^{3+}_x}  \|u_3 u_4\|_{L^{\infty}_t L^1_x} \| u_5\|_{L^{\infty}_t L^2_x} \\
&  \lesssim \frac{\delta(1 \wedge N_{\min}^{0+})}{N_{\max}^{0+} N^{3-}} \prod_{i=1}^5 \|u_i\|_{X^{0,\frac{1}{2}+}} \,.
\end{align*}
\textbf{b.} $N_2 \ge N_1 \ge N$ ($\Rightarrow N_{\min} \gtrsim N$).
We obtain an additional factor 
$(\frac{N_1}{N})^{\frac{1}{2}-} (\frac{N_2}{N})^{\frac{1}{2}-} 
\lesssim (\frac{N_1}{N})^{1-}$ leading to
\begin{align*}
D & \lesssim \frac{\delta}{ N_{\max}^{0+} N^{4-}} \| \langle D 
 \rangle^{-2}(u_1 D^{-1}u_2)\|_{L^{\infty}_t L^{6}_x} \| \langle D 
\rangle^{-2}(u_3 u_4)\|_{L^{\infty}_t L^{3}_x}  
 \|u_5\|_{L^{\infty}_t L^2_x} \\
& \lesssim \frac{\delta}{N_{\max}^{0+} N^{4-}}  \| u_1 D^{-1}u_2\|_{L^{\infty}_t
  L^{3/2}_x}  \|u_3 u_4\|_{L^{\infty}_t L^1_x} \| u_5\|_{L^{\infty}_t L^2_x} \\
&  \lesssim \frac{\delta(1 \wedge N_{\min}^{0+})}{N_{\max}^{0+} N^{4-}} 
 \prod_{i=1}^5 \|u_i\|_{X^{0,\frac{1}{2}+}} \,.
\end{align*}
which completes the proof of \eqref{54'''}.

Next we want to prove the following estimate:
\begin{equation}
\label{54''''}
 \big|\int_0^{\delta} \langle (W* \operatorname{Re} Iu)Iu-I((W* \operatorname{Re} u)u), 
I((W*|u|^2)u) \rangle dt\big| 
\lesssim \frac{\delta}{N^{2-}}  \|\nabla Iu\|^5_{X^{0,\frac{1}{2}+}}  .
 \end{equation}
We again consider $D$ with
\begin{align*}
&M(\xi_1,\dots .,\xi_5) \\
&:=  \frac{|m(\xi_1+\xi_2)-m(\xi_1)m(\xi_2)|}{m(\xi_1)m(\xi_2) 
\langle \xi_1 \rangle^2 }\cdot \frac{m(\xi_3 +\xi_4
 + \xi_5)}{m(\xi_3)m(\xi_4)m(\xi_5) \langle \xi_3 + \xi_4 \rangle^2}
\cdot \prod_{i=1}^5  |\xi_i|^{-1} \,,
\end{align*}
and treat only the more difficult case $N_2 \ge N_1$, and assume without 
loss of generality $N_3 \ge N_4$. We consider the same cases as for \eqref{54'''}. 

In case 1 we replace 
$\|\langle D \rangle^{-2}(u_1 u_2)\|_{L^{\infty}_t L^{\frac{6}{5}}_x}$ by
 $$
\|\langle D \rangle^{-2}u_1 u_2\|_{L^{\infty}_t L^{\frac{6}{5}}_x} 
\lesssim \|u_1\|_{L^{\infty}_t L^2_x} \|u_2\|_{L^{\infty}_t L^2_x} \,.
$$
In the cases 2, 3 and 4a we replace  
$\|\langle D \rangle^{-2}(D^{-1}u_1 u_2)\|_{L^{\infty}_t L^2_x}$ by
 $$
\|\langle D \rangle^{-2} D^{-1} u_1 u_2\|_{L^{\infty}_t L^2_x}
 \lesssim \|D^{-1} u_1\|_{L^{\infty}_t L^{6+}_x} \|u_2\|_{L^{\infty}_t L^2_x} \,.
$$
In case 4b. estimate
 $$
\|\langle D \rangle^{-2} D^{-1} u_1 u_2\|_{L^{\infty}_t L^2_x}
 \lesssim \| u_1\|_{L^{\infty}_t L^2_x} \|u_2\|_{L^{\infty}_t L^2_x} \,.
$$
Case 5a is essentially unchanged, whereas in Case 5b replace 
$$
\|\langle D \rangle^{-2}(D^{-1}u_1 D^{-1}u_2)\|_{L^{\infty}_t L^{3+}_x} 
\|\langle D \rangle^{-2}(u_3 D^{-1}u_4)\|_{L^{\infty}_t L^{6-}_x}
$$
 by
\begin{align*}
&\|\langle D \rangle^{-2}D^{-1}u_1 D^{-1} u_2\|_{L^{\infty}_t L^{2+}_x} 
\|\langle D \rangle^{-2}(u_3 D^{-1}u_4)\|_{L^{\infty}_t L^{\infty -}_x} \\
& \lesssim \|D^{-1}u_1\|_{L^{\infty}_t L^2_x} \|D^{-1}u_2\|_{L^{\infty}_t
 L^{2+}_x} \|u_3 D^{-1}u_4\|_{L^{\infty}_t L^{\frac{3}{2}+}_x} \\
&\lesssim  \frac{1}{N_1 N_2^{1-}} \|u_1\|_{L^{\infty}_t L^2_x} 
\|u_2\|_{L^{\infty}_t L^2_x}
 \|u_3\|_{L^{\infty}_t L^2_x} \|D^{-1}u_4\|_{L^{\infty}_t L^{6+}_x} \,.
 \end{align*}
In case 6a estimate
 \begin{align*}
\|\langle D \rangle^{-2} D^{-1} u_1 D^{-1} u_2\|_{L^{\infty}_t L^6_x} 
&\lesssim \|\langle D \rangle^{-2} D^{-1} u_1\|_{L^{\infty}_t L^{\infty}_x} \|D^{-1} u_2\|_{L^{\infty}_t L^6_x} \\
& \lesssim \|D^{-1} u_1\|_{L^{\infty}_t L^{6+}_x} \| u_2\|_{L^{\infty}_t L^2_x}\,.
\end{align*}
Similarly case 6b can be handled, so that \eqref{54''''} is complete.

The forth and third order terms in $|\langle F(Iu)-IF(u),IF(u)\rangle|$ 
turn out to be less critical. We omit any detailed calculations here and 
just refer to the recent paper of the author \cite{Pe}, where the following 
estimates were given even under the weaker assumption $|\widehat{W}(\xi)| \lesssim 1$ 
(compared to the property $|\widehat{W}(\xi)| \lesssim \langle \xi \rangle^{-2}$
 which we have in the present study). We have to remark that the assumption
 $s \ge \frac{3}{4}$ in that paper is not really necessary for these forth and 
third order terms, but could be replaced by $s > 1/2$, because the 
factors $(\frac{N_i}{N})^{\frac{1}{4}}$ can everywhere be replaced by 
$(\frac{N_i}{N})^{\frac{1}{2}-}$ without significance for the results. 
We have (\cite{Pe}, section 4.6, 4.7 and 4.8):
\begin{gather*}
\int_0^{\delta} |\langle I(u^3)-(Iu)^3,Iu \rangle| dt 
 \lesssim N^{-3+} \|\nabla Iu\|^4_{X^{0,\frac{1}{2}+}[0,\delta]} \,,\\
\int_0^{\delta} |\langle I(u^2)-(Iu)^2,(Iu)^2 \rangle| dt  
\lesssim N^{-3+} \|\nabla Iu\|^4_{X^{0,\frac{1}{2}+}[0,\delta]} \,,\\
\int_0^{\delta} |\langle I(u^2)-(Iu)^2,Iu \rangle| dt  
\lesssim N^{-\frac{5}{2}+} \delta^{1/2} \|\nabla Iu\|^3_{X^{0,\frac{1}{2}+}[0,\delta]} \,.
\end{gather*}
Summarizing all our estimates in this chapter we arrive at \eqref{29'}.

\end{document}